\newtheorem{theorem}{Theorem}[section]
\newtheorem{corollary}[theorem]{Corollary}
\newtheorem{lemma}[theorem]{Lemma}
\newtheorem{proposition}[theorem]{Proposition}
\newtheorem{remark}[theorem]{Remark}
\numberwithin{equation}{section}
\def\R{{\mathbb R}}
\def\E{{{\mathbb E}\,}}
\def\N{{\mathbb N}}
\def\Var{{\mathop {{\rm Var\, }}}}
\def\square{{\vcenter{\vbox{\hrule height.3pt
        \hbox{\vrule width.3pt height5pt \kern5pt
           \vrule width.3pt}
        \hrule height.3pt}}}}
\def\tlint{{- \kern-0.85em \int \kern-0.2em}}
\def\dlint{{- \kern-1.05em \int \kern-0.4em}}
\def\si{\sigma}
\def\si{{\sigma}}
\def\si{{\sigma}}
\def \eref#1{\hbox{(\ref{#1})}}
\def \eref#1{\hbox{(\ref{#1})}}
\def\si{{\sigma}}
\newenvironment{proof}[1][Proof]{\noindent\textit{#1.} }{\hfill \rule{0.5em}{0.5em}}
\begin{document}

\title{Limit theorems for additive functionals of some self-similar Gaussian processes}
\date{\today}
\author{Minhao Hong, Heguang Liu and Fangjun Xu\thanks{F. Xu is partially supported by National Natural Science Foundation of China (Grant No.11871219).}
}

\maketitle

\begin{abstract}

Under certain mild conditions, limit theorems for additive functionals of some $d$-dimensional self-similar Gaussian processes are obtained. These limit theorems work for general Gaussian processes including fractional Brownian motions, sub-fractional Brownian motions and bi-fractional Brownian motions. To prove these results, we use the method of moments and an enhanced chaining argument. The Gaussian processes under consideration are required to satisfy certain strong local nondeterminism property. A tractable sufficient condition for the strong local nondeterminism property is given and it only relays on the covariance functions of the Gaussian processes. Moreover, we give a sufficient condition for the distribution function of a random vector to be determined by its moments.

\vskip.2cm \noindent {\it Keywords:} Gaussian process, limit theorem, method of moments, enhanced chaining argument, strong local nondeterminism property.

\vskip.2cm \noindent {\it Subject Classification: Primary 60F05, 60J55;
Secondary 60G15, 60G18.}
\end{abstract}

\section{Introduction}
Let  $X=\{X_t=(X^1_t,\dots, X^d_t),\, t\geq 0\}$ be a $d$-dimensional centered Gaussian process whose components are independent copies of some one-dimensional centered Gaussian process. We assume that $X$ satisfies the self-similarity with index $H\in (0,1)$ and the following strong local nondeterminism property: there exists a positive constant $\kappa_H$ such that for any integer $m\ge 1$, any times $0=s_0<s_1\leq \dots \leq
s_m<t<\infty$,  
\begin{equation}
\Var\Big(X_t| X_{s_1},\dots, X_{s_m}\Big)\geq \kappa_H \min_{1\leq j\leq m} |t-s_j|^{2H}. \label{slndp}
\end{equation} 
According to Remark  1.4 in \cite{hx}, if $Hd<1$,  using Lemma 3.8 in \cite{clrs} and similar arguments as in the proof of Theorem 1.1 in \cite{hx}, we could easily obtain that the local time $L_t(\lambda)$ of $X$ exists in $L^p$ for any $p\in [1,\infty)$. Moreover, for any $\alpha\in (0, \frac{1-Hd}{H}\wedge 1)$, $\mathbb{E} |L_t(z+x)-L_t(x)|^2$ is less than a constant multiple of $|z|^{2\alpha}$ for all $z,x\in\mathbb{R}^d$. Therefore, for any integrable function $f:\R^d \rightarrow \R$ satisfying $\int_{\R^d}|f(x)| |x|^{\beta } dx<\infty$ with $\beta>0$, one can show that
\begin{align*}
&\mathbb{E}\left|n^{Hd} \int_0^{t} f(n^H(X_s-\lambda))\, ds-L_t(\lambda) \int_{\R^d} f(x)\, dx\right|\\
&=\mathbb{E} \left| \int_{\mathbb{R}^d} f(x) (L_t(\frac{x}{n^H}+\lambda)-L_t(\lambda)) dx\right|\\
&\leq \int_{\mathbb{R}^d} |f(x)| \left(\mathbb{E} |L_t(\frac{x}{n^H}+\lambda)-L_t(\lambda)|^2\right)^{1/2} dx\\
&\leq c_{\alpha}\, n^{-H\alpha}  \int_{\mathbb{R}^d} |f(x)||x|^{\alpha}  dx
\end{align*}
for any $\alpha\in (0, \beta\wedge \frac{1-Hd}{H}\wedge 1)$. This implies 
\begin{align*}
&\lim_{n\to\infty}\mathbb{E}\left|n^{Hd} \int_0^{t} f(n^H(X_s-\lambda))\, ds-L_t(\lambda) \int_{\R^d} f(x)\, dx\right|=0.
\end{align*}
In this paper, we will investigate the asymptotic behavior of 
\begin{align} \label{main}
n^{Hd} \int_0^{t} f(n^H(X_s-\lambda))\, ds-L_t(\lambda) \int_{\R^d} f(x)\, dx
\end{align}
as $n$ tends to infinity.  When $X$ is the Brownian motion or the fractional Brownian motion, the asymptotic behavior of (\ref{main}) received a lot of attention in the past decades, see the recent paper \cite{jnnp} and the references therein. In \cite{jnnp}, the authors considered the asymptotic behavior of (\ref{main}) when $X$ is the one-dimensional fractional Brownian motion. Convergence of finite dimensional distributions of (\ref{main}) in the case $H\geq \frac{1}{3}$ and the $L^2$ convergence of (\ref{main}) in the case $H<\frac{1}{3}$ are obtained there.

We will extend the limit theorems in \cite{jnnp} to more general multi-dimensional Gaussian processes with the convergence of finite dimensional distributions improved to the convergence in law in the space $C([0,\infty ))$. Moreover, the influence of the function $f$ on the range of the index $H$ will be clearly characterized. For example, if $f$ is the density of the standard real normal random variable, then the constant $\mathcal{C}_{\frac{1}{3}}(f)$ in \cite{jnnp} is equal to zero and so the limiting process in Theorem 1.3 in \cite{jnnp} is the zero process when $H=\frac{1}{3}$. This means that the limit theorems in \cite{jnnp} could be further optimized according to the function $f$. In fact,  with some proper modifications, Theorem 1.3 in \cite{jnnp} is still true for $H\geq \frac{1}{5}$, see our Theorems \ref{th1} and \ref{th2} below. Therefore, our results have better applications in the statistical inference for these Gaussian processes.

In order to formulate our results, we need to introduce some notation.  Fix a number $\beta\in (0, 2]$,  let 
\[
\Theta^{\beta}=\Big\{f \in L^{2}(\R^d): \int_{\R}|f(x)| |x|^{\beta } dx<\infty\Big\}
\] 
for $\beta\in (0,1]$ and 
\[
\Theta^{\beta}=\Big\{f \in L^{2}(\R^d): \int_{\R^d}|f(x)| |x|^{\beta } dx<\infty, \int_{\R^d} xf(x)dx=0\Big\} 
\]
for $\beta\in(1,2]$. It is easy to see that $f\in\Theta^{\beta}$ implies $f\in L^{1}(\R^d)$.

We also need the following assumptions on the Gaussian process $X$:
\begin{itemize}
\item[({\bf A})]  For any $t>0$, there exist constants $ \eta\geq 1$,  $\sigma>0$ and non-negative decreasing function $\phi(\varepsilon)$  on $[0,1/\eta]$ with $\lim\limits_{\varepsilon\to 0}\phi(\varepsilon)=0$, such that
\[
0\leq h^{2H}(\sigma-\phi(h/t))\leq \mathrm{Var}(X^1_{t+h}-X^1_t)\leq h^{2H}(\sigma+\phi(h/t))
\]
for all $h\in[0,t/\eta]$.
 
\item[({\bf B})] For any $0<t_1<t_2<t_3<t_4<\infty$ and $\eta>1$, there exists a non-negative decreasing function $\psi(\eta)$ with $\lim\limits_{\eta\to\infty}\psi(\eta)=0$ such that,
if  (i) $\frac{\Delta t_2}{\Delta t_4}\leq \frac{1}{\eta}$ or (ii) $\frac{\Delta t_2}{\Delta t_4} \geq \eta$ or (iii) $\frac{\max\{\Delta t_2, \Delta t_4\}}{\Delta t_3}\leq \frac{1}{\eta}$, then 
 \[
\Big|\E\big(X^1_{t_4}-X^1_{t_3}\big)\big(X^1_{t_2}-X^1_{t_1}\big)\Big|\leq  \psi(\eta)\, \left[\E\big(X^1_{t_4}-X^1_{t_3}\big)^2\right]^{\frac{1}{2}}\E\left[\big(X^1_{t_2}-X^1_{t_1}\big)^2 \right]^{\frac{1}{2}},
\] 
where $\Delta t_i=t_i-t_{i-1}$ for $i=2,3,4$.
\end{itemize}

By Lemmas 2.4-2.6 in \cite{sxy}, Assumptions $({\bf A}) \&({\bf B})$ are satisfied by fractional Brownian motions, bi-fractional Brownian motions and sub-fractional Brownian motions. Moreover, we only use the covariance functions of these Gaussian processes in the proofs of these lemmas. 

A sufficient condition for a centered self-similar Gaussian process to satisfy the strong local nondeterminism (\ref{slndp}) is given in Theorem \ref{glndp} in the appendix.  As a consequence, we obtain that bi-fractional Brownian motions and sub-fractional Brownian motions satisfy the strong local nondeterminism property (\ref{slndp}), see Corollaries \ref{bfbm} and \ref{sfbm}.  Since fractional Brownian motions are special bi-fractional Brownian motions, they also satisfy the strong local nondeterminism (\ref{slndp}).  A very simple  proof for fractional Brownian motions to satisfy  (\ref{slndp}) is available in \cite{clrs}. The strong local nondeterminism in Theorem \ref{glndp} is different from those in \cite{tx,l} where the times are in a finite closed interval bounded away from $0$ and the constant in the lower bound should depend on the interval. We don't have such requirements on the times and the constant $\kappa_{H}$ in the lower bound. Moreover, the hypothesis in Theorem \ref{glndp} only relays on the covariance function of the Gaussian process.

For any $H\in (0,1)$, let $G^{H,d}_L$ be the class of all $d$-dimensional centered Gaussian processes satisfying the self-similarity with index $H$, the strong local nondeterminism property (\ref{slndp}) and Assumptions $({\bf A}) \&({\bf B})$. Clearly, the class $G^{H,d}_L$ contains fractional Brownian motions, bi-fractional Brownian motions and sub-fractional Brownian motions. In particular, to verify that a self-similar Gaussian process belongs to the class $G^{H,d}_L$, we only need to utilize its covariance function. This will make the verification not difficult. In general,  using the method of moments to prove central limit theorems for additive functionals of Gaussian processes is very complicated. A chaining argument was first developed in \cite{nx} to simplify the kind of proofs.  However, it could not clearly reflect the influence of the function $f$ on the limit theorems. So we develop an enhanced chaining argument to overcome this drawback. See Propositions \ref{chain1} and \ref{chain2} below for the enhanced chaining argument.

The following are main results of this paper.
\begin{theorem}
\label{th1} Suppose $X\in G^{H,d}_L$, $\frac{1}{2\beta+d}<H<\frac{1}{d}$ and $f\in \Theta^{\beta}$ for some $\beta\in(0,2]$.
Then, for any $\lambda\in\mathbb{R}^d$,
\begin{align*}
 & \left\{ n^{\frac{Hd+1}{2}} \Big(\int^t_0 f(n^H(X_s-\lambda))ds-n^{-Hd}L_t(\lambda)\int_{\mathbb{R}^d} f(x)dx\Big),\; 
  t\ge 0\right\} \\
  &\qquad\qquad\qquad\qquad\qquad\qquad\qquad\qquad\qquad \overset{\mathcal{L}}{\longrightarrow }  \left\{ \sqrt{C_{H,d,f}} W(L_t(\lambda)) \,, t\ge
  0\right\}
\end{align*}
in the space $C([0,\infty ))$, as $n$ tends to infinity,  where ``$\overset{\mathcal{L}}{\longrightarrow }$'' denotes the convergence in law, $W $ is a real-valued standard Brownian
motion independent of $X$,
\begin{align*}
C_{H,d, f} 
&=\frac{2}{(2\pi)^d}   \int_0^\infty \int_{\mathbb{R}^d}\Big|\widehat{f}(y)-\widehat{f}(0)\Big|^2e^{-\frac{\sigma}{2}|y|^2 s^{2H}}dyds
\end{align*}
with $\widehat{f}$ denoting the Fourier transform of $f$ and $\sigma$ being the positive constant in Assumption $({\bf A})$. 
\end{theorem}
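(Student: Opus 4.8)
The plan is to prove convergence of the finite-dimensional distributions by the method of moments, and then to establish tightness in $C([0,\infty))$ via a Kolmogorov-type estimate; the statement then follows from the standard criterion. Write $\widehat f(y)=\int_{\R^d}f(x)e^{-i\langle y,x\rangle}\,dx$ and set
\[
F_n(t):=n^{\frac{Hd+1}{2}}\Big(\int_0^t f\big(n^H(X_s-\lambda)\big)\,ds-n^{-Hd}L_t(\lambda)\int_{\R^d}f(x)\,dx\Big).
\]
Fourier inversion together with the scaling $y\mapsto y/n^H$ gives (after a routine regularization of the local time)
\[
F_n(t)=\frac{n^{\frac{1-Hd}{2}}}{(2\pi)^d}\int_0^t\!\!\int_{\R^d}\big(\widehat f(y/n^H)-\widehat f(0)\big)\,e^{i\langle y,\,X_s-\lambda\rangle}\,dy\,ds ,
\]
so that the centering term is exactly the $\widehat f(0)$ contribution and only the increment $\widehat f(y/n^H)-\widehat f(0)$ survives. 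The hypothesis $f\in\Theta^{\beta}$ provides the pointwise bound $|\widehat f(y)-\widehat f(0)|\le c|y|^{\beta}$: for $\beta\in(0,1]$ from $|e^{-i\theta}-1|\le 2^{1-\beta}|\theta|^{\beta}$, and for $\beta\in(1,2]$ from the condition $\int_{\R^d}x f(x)\,dx=0$ combined with $|e^{-i\theta}-1+i\theta|\le c|\theta|^{\beta}$.

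\textbf{Moments.} Fix times $0\le t_1\le\cdots\le t_\ell$ and integers $m_1,\dots,m_\ell\ge0$ with $m=\sum_j m_j$. Expanding $\prod_j F_n(t_j)^{m_j}$ and taking expectations yields an integral over an ordered $m$-dimensional time simplex and over frequencies $y_1,\dots,y_m\in\R^d$, whose integrand is $\prod_k\big(\widehat f(y_k/n^H)-\widehat f(0)\big)$ times the Gaussian factor $\exp\big(-\tfrac12\Var(\sum_k\langle y_k,X_{s_k}\rangle)\big)$; since the coordinates of $X$ are i.i.d., the last factor is the $d$-th power of a one-dimensional quantity. I would decompose the simplex according to which consecutive gaps $s_k-s_{k-1}$ are ``small'' and which are ``large'' (relative to a parameter sent to infinity after $n$) and apply the enhanced chaining argument of Propositions \ref{chain1} and \ref{chain2}. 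On each block the strong local nondeterminism (\ref{slndp}) bounds the relevant conditional variances from below and produces an exponential factor $e^{-\kappa_H|y_k|^2 r_k^{2H}}$ in the associated small gap $r_k$; Assumption $({\bf B})$ decouples blocks separated by a large gap, and Assumption $({\bf A})$ replaces, on a small gap of length $r$ inside $[0,t]$, the increment variance $\Var(X^1_{s+r}-X^1_s)$ by $\sigma r^{2H}$ up to the vanishing error $\phi$.

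\textbf{Identification of the limit.} Combining the above, any configuration in which the $m$ times are not grouped into $m/2$ nearest-neighbour pairs contributes $o(1)$; in particular all odd moments tend to $0$. When $m=2p$, the surviving configurations factorize over the pairs: for a pair located near a point $u\in(0,t]$ and separated by a small gap, the change of variables $y\mapsto y/r^{H}$ in the inner integral and $n\to\infty$ turn the pair into the factor
\[
\frac{2}{(2\pi)^d}\int_0^\infty\!\!\int_{\R^d}\big|\widehat f(y)-\widehat f(0)\big|^2\,e^{-\frac{\sigma}{2}|y|^2 s^{2H}}\,dy\,ds=C_{H,d,f},
\]
the factor $2$ coming from the two internal orderings of the pair and the $(2\pi)^{-d}$ combining the two Fourier normalizations with the Gaussian normalization of the density of $X_u$; the outer integration over the positions of the pairs reconstructs the random occupation measure $dL_\cdot(\lambda)$. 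Note that $C_{H,d,f}<\infty$ precisely because $H>\frac{1}{2\beta+d}$ (for large $s$ the inner integral behaves like $s^{-H(2\beta+d)}$, using $|\widehat f(y)-\widehat f(0)|\le c|y|^{\beta}$), which is exactly where this hypothesis is used; the same expansion also shows $\E[F_n(t)]\to0$. Hence $\E\big[\prod_j F_n(t_j)^{m_j}\big]$ converges to the corresponding mixed moment of $\{\sqrt{C_{H,d,f}}\,W(L_t(\lambda))\}$. Carrying out the computation with an extra factor $\exp\big(i\sum_k\langle\theta_k,X_{u_k}\rangle\big)$ shows the convergence is joint with $X$ and that the limit, conditionally on $X$, is centered Gaussian with covariance $C_{H,d,f}\,L_{t\wedge t'}(\lambda)$, i.e.\ $W$ is a standard Brownian motion independent of $X$. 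Since these limit laws are determined by their moments (by the moment-determinacy criterion established in this paper), the finite-dimensional distributions of $F_n$ converge to those of $\sqrt{C_{H,d,f}}\,W(L_\cdot(\lambda))$.

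\textbf{Tightness and the main obstacle.} Running the same moment machinery on a subinterval $[s,t]$ gives, for every integer $p\ge1$,
\[
\E\big|F_n(t)-F_n(s)\big|^{2p}\le C_p\,|t-s|^{(1-Hd)p}
\]
uniformly in $n$ and in $s,t$ in compact sets. Choosing $p$ with $(1-Hd)p>1$, possible since $H<\frac1d$, the Kolmogorov--Chentsov criterion yields tightness of $\{F_n\}$ in $C([0,\infty))$; with the finite-dimensional convergence this gives the claimed convergence in law. I expect the moment step to be the main obstacle: organizing the simplex decomposition so that the ``non-paired'' regions are provably negligible while the ``paired'' regions converge to the correct constant, and keeping track of the $\beta$-dependence of $|\widehat f(y)-\widehat f(0)|$ so that $H>\frac{1}{2\beta+d}$ emerges as exactly the threshold at which the frequency integrals remain finite. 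This is precisely what the enhanced chaining argument of Propositions \ref{chain1} and \ref{chain2} is built to control.
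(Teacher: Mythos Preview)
Your proposal is correct and follows essentially the same approach as the paper: tightness via a Kolmogorov moment bound (Proposition~\ref{tight}), convergence of finite-dimensional distributions by the method of moments using the enhanced chaining argument and the simplex decomposition under Assumptions $({\bf A})\&({\bf B})$ (Propositions~\ref{odd} and~\ref{even1}), and moment determinacy of the limit (Lemma~\ref{lema2} together with Theorem~\ref{moment}). One small correction: for Theorem~\ref{th1} only Proposition~\ref{chain1} is needed---Proposition~\ref{chain2} handles the critical case $H=\tfrac{1}{2\beta+d}$ of Theorem~\ref{th2}---and the paper's tightness exponent carries a harmless loss of $\gamma$ from the chaining error terms, giving $(b-a)^{m(1-Hd)-\gamma}$ rather than $(b-a)^{m(1-Hd)}$.
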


\begin{theorem}
\label{th2} Suppose $X\in G^{H,d}_L$, $H=\frac{1}{2\beta+d}$ and $f\in \Theta^{\beta}$ for some $\beta\in \{1, 2\}$. 
Then, for any $\lambda\in\mathbb{R}^d$,
\begin{align*}
 & \left\{(\ln n)^{-\frac{1}{2}} n^{\frac{Hd+1}{2}} \Big(\int^t_0 f(n^H(X_s-\lambda))ds-n^{-Hd}L_t(\lambda)\int_{\mathbb{R}^d} f(x)dx\Big),\; 
  t\ge 0\right\} \\
  &\qquad\qquad\qquad\qquad\qquad\qquad\qquad\qquad\qquad\qquad\qquad \overset{\mathcal{L}}{\longrightarrow } \ \
  \Big\{  \sqrt{D_{H,d,f}} W(L_t(\lambda)) \,, t\ge
  0\Big\}
\end{align*}
in the space $C([0,\infty ))$, as $n$ tends to infinity,  where  $W $ is a real-valued standard Brownian
motion independent of $X$ and
\begin{align}\label{DHdf}
D_{H,d, f}   \nonumber
&=\frac{2}{(2\pi)^d} \lim_{n\to\infty}\frac{1}{\ln n}   \int_0^{n} \int_{\mathbb{R}^d}\Big|\widehat{f}(y)-\widehat{f}(0)\Big|^2e^{-\frac{\sigma}{2}|y|^2s^{2H}}dyds \\ 
&=\frac{2}{(2\pi)^d \sigma^{\frac{1}{2H}}} \int_{\mathbb{R}^d}\Big| \int_{\mathbb{R}^d} f(z) \frac{1}{\beta}(z\cdot x)^{\beta} dz\Big|^2e^{-\frac{1}{2}|x|^2}dx.
\end{align}
\end{theorem}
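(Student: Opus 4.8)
The plan is to run the method of moments together with the enhanced chaining argument of Propositions \ref{chain1}--\ref{chain2}, exactly as in the proof of Theorem \ref{th1}; the only genuinely new feature is that at the critical exponent $H=\tfrac1{2\beta+d}$ the relevant time integral diverges logarithmically. By Fourier inversion, with $\theta_n=(\ln n)^{-1/2}$ and $g_n(u)=\widehat f(u/n^H)-\widehat f(0)$,
\[
 n^{Hd}\!\int_0^t\! f\big(n^H(X_s-\lambda)\big)\,ds-L_t(\lambda)\widehat f(0)=\frac1{(2\pi)^d}\int_0^t\!\!\int_{\mathbb R^d} g_n(u)\,e^{iu\cdot(X_s-\lambda)}\,du\,ds,
\]
so the process in the statement equals $Z_n(t):=\frac{\theta_n n^{(1-Hd)/2}}{(2\pi)^d}\int_0^t\!\int_{\mathbb R^d}g_n(u)e^{iu\cdot(X_s-\lambda)}\,du\,ds$; the point $\lambda$ enters only through a unimodular phase and the clock $L_t(\lambda)$. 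Since $f\in\Theta^{\beta}$ with $\beta\in\{1,2\}$ — in particular $\int zf(z)\,dz=0$ for $\beta=2$ — a Taylor expansion of $\widehat f$ at $0$ gives $|g_n(u)|\le c\big((|u|/n^H)^{\beta}\wedge\|f\|_{L^1}\big)$ and $|\widehat f(y)-\widehat f(0)|^2\sim\beta^{-2}\big|\int f(z)(z\cdot y)^{\beta}dz\big|^2$ as $y\to0$. Feeding this and the substitution $y=w/s^{H}$ into the first expression for $D_{H,d,f}$, and using $H(d+2\beta)=1$ and $\tfrac1{2H}=\beta+\tfrac d2$, one checks that the $s$-integral is exactly logarithmically divergent (so the defining limit exists) and that a routine change of variables and Gaussian integration give the second expression in \eqref{DHdf}.

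For the finite-dimensional distributions I would use moments. Since $\sqrt{D_{H,d,f}}W(L_\cdot(\lambda))$ is, conditionally on $X$, centred Gaussian with covariance $D_{H,d,f}L_{s\wedge t}(\lambda)$, it suffices to prove that for all $k$, all $0=t_0<t_1<\dots<t_k$, all $p_1,\dots,p_k\in\mathbb N$ with $m=\sum_jp_j$, and all bounded continuous $\Phi$ from a measure-determining family of functionals of $X$,
\[
 \mathbb E\Big[\Phi(X)\prod_{j=1}^k\big(Z_n(t_j)-Z_n(t_{j-1})\big)^{p_j}\Big]\longrightarrow\mathbb E\Big[\Phi(X)\prod_{j=1}^k\big(D_{H,d,f}(L_{t_j}-L_{t_{j-1}})\big)^{p_j/2}(p_j-1)!!\Big]
\]
when all $p_j$ are even and to $0$ otherwise; the moment-determinacy criterion of this paper then upgrades this to joint convergence in law of $(Z_n(t_1),\dots,Z_n(t_k),X)$ with $W$ independent of $X$. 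Expanding each increment by the Fourier formula and using that the coordinates of $X$ are i.i.d., the left side is a finite sum — over assignments of the time variables $s_1,\dots,s_m$ to the blocks $[t_{j-1},t_j]$ — of integrals $\big(\theta_n n^{(1-Hd)/2}(2\pi)^{-d}\big)^m\!\int\prod_l g_n(u_l)\,\mathbb E\big[\Phi(X)e^{i\sum_l u_l\cdot(X_{s_l}-\lambda)}\big]\,d\vec u\,d\vec s$ over the corresponding product of time-blocks.

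Ordering $s_1\le\dots\le s_m$, the strong local nondeterminism \eqref{slndp} gives $\mathrm{Var}(X^1_{s_l}\mid X^1_{s_1},\dots,X^1_{s_{l-1}})\ge\kappa_H(s_l-s_{l-1})^{2H}$, which, after the triangular change of variables diagonalising the Gaussian quadratic form, controls the $\vec u$-integral; and Assumptions $({\bf A})$, $({\bf B})$, as packaged in Propositions \ref{chain1}--\ref{chain2}, show that the dominant contribution comes from configurations in which the $m$ points split into $m/2$ pairs, the two members of each pair lying in a common block with a gap negligible compared to all surrounding gaps. On these configurations $\Phi(X)$ decouples from the phase (which, to leading order, depends only on the small increments $X_{s_{l+1}}-X_{s_l}$, asymptotically independent of $X$); by $({\bf A})$ each pair contributes $\int_{\mathbb R^d}|g_n(u)|^2 e^{-\frac\sigma2|u|^2(s_{l+1}-s_l)^{2H}}\,du$; and summing the pair-gaps over a block produces $\int_0^{n}\!\int_{\mathbb R^d}|\widehat f(y)-\widehat f(0)|^2 e^{-\frac\sigma2|y|^2s^{2H}}\,dy\,ds\sim\tfrac{(2\pi)^d}2 D_{H,d,f}\ln n$, so the factor $\theta_n^2 n^{1-Hd}$ carried by each pair cancels both the logarithm and the power of $n$ and leaves $D_{H,d,f}(L_{t_j}-L_{t_{j-1}})$ once the remaining time integrals are reassembled into the local-time functional as in the non-critical proof. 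Counting pairings gives $\prod_j(p_j-1)!!$ and forces every $p_j$ even; the factor $\Phi(X)$ is handled by conditioning, using \eqref{slndp} and $({\bf B})$ to show that the conditional characteristic function converges to the unconditional one uniformly on the relevant range of $\vec u$. I expect this chaining estimate at the \emph{critical} exponent to be the main obstacle: unlike in Theorem \ref{th1}, where $C_{H,d,f}$ is a convergent integral dominated by a single scale of the pair-gap, here all scales contribute comparably, so they must be summed and renormalised by $(\ln n)^{-1}$, and one must still show that the higher, non-paired joint cumulants vanish after this renormalisation — which is exactly why $\beta$ is restricted to $\{1,2\}$, so that the behaviour of $\widehat f$ near $0$ is governed by a single homogeneous term of degree $\beta$.

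Finally, for tightness the same chaining estimate applied to increments gives $\mathbb E|Z_n(t)-Z_n(r)|^{2q}\le C_q|t-r|^{q(1-Hd)}$ uniformly in large $n$ for every integer $q$; choosing $q>1/(1-Hd)$ and applying the Kolmogorov--Chentsov criterion (with the trivial tightness of $X$) yields tightness of $\{Z_n\}$ in $C([0,\infty))$. Combined with the finite-dimensional convergence this gives $Z_n\overset{\mathcal L}{\longrightarrow}\sqrt{D_{H,d,f}}W(L_\cdot(\lambda))$ in $C([0,\infty))$, which is the assertion.
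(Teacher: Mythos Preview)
Your proposal is correct and follows essentially the same route as the paper: tightness via the moment bound of Proposition \ref{tight} and Kolmogorov--Chentsov, plus convergence of joint moments via the enhanced chaining argument (Proposition \ref{chain2}) and the odd/even analysis of Propositions \ref{odd} and \ref{even2}, with moment-determinacy supplied by Lemma \ref{lema2} and Theorem \ref{moment}. The paper's one-line proof of Theorem \ref{th2} cites exactly these ingredients.

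The one divergence is your insertion of a test functional $\Phi(X)$ to establish joint convergence of $(Z_n,X)$ and hence independence of $W$ from $X$. The paper does not do this: it computes only the unconditional moments $\mathbb E\big[\prod_j(F_n(b_j)-F_n(a_j))^{m_j}\big]$ and matches them to those of $\sqrt{D_{H,d,f}}\,W(L_\cdot(\lambda))$ via Lemma \ref{lema2}. This suffices, because the theorem asserts only convergence in law of $F_n$ in $C([0,\infty))$; the phrase ``$W$ independent of $X$'' is a description of the limiting marginal law (a subordinated Brownian motion), not a claim about joint convergence with $X$. Your $\Phi(X)$ version would yield the stronger joint statement, but the decoupling step you sketch (``conditional characteristic function converges to the unconditional one'') is not carried out in the paper and would require separate work. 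If you drop $\Phi(X)$, your plan coincides with the paper's proof. For reference, the paper makes your pairing heuristic precise in Proposition \ref{even2} by splitting $D_{\mathbf m}$ into a bad set $\widetilde D_{\mathbf m,n}$ (some pair-gap exceeds $n/\ln^2 n$ or some between-pair gap is below $n/\ln n$), on which the contribution vanishes by local nondeterminism, and its complement, on which Assumptions $(\mathbf A)$ and $(\mathbf B)$ give exactly the variance decoupling you describe.
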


\begin{theorem}
\label{th3} Suppose $X\in G^{H,d}_L$, $0<H<\frac{1}{2\beta+d}$, $f\in\Theta^{\beta}$ for some $\beta\in \{1,2\}$. Then, for any $t>0$, $\lambda\in\mathbb{R}^d$ and $p\geq 1$,
\begin{align*}
 &n^{H\beta} \Big(n^H\int^t_0 f(n^H(X_s-\lambda))ds-L_t(\lambda)\int_{\mathbb{R}^d} f(x)dx\Big) \\
&\qquad\qquad \overset{L^p}{\longrightarrow}
\begin{cases}
\displaystyle\int_{\mathbb{R}^d} \Big(\sum\limits^d_{i=1} L^{({\bf e}_i)}_t(\lambda)x_i\Big) f(x)\, dx  &  \text{if}\; \beta=1;  \\   
\displaystyle\int_{\mathbb{R}^d} \Big( \frac{1}{2}\sum\limits^d_{i,k=1} x_i L^{({\bf e}_i+{\bf e}_k)}_t(\lambda)x_k\Big) f(x)\, dx  & \text{if}\; \beta=2,
 \end{cases}
\end{align*}
where ${\bf e}_i$ is the $i$-th unit vector in $\mathbb{R}^d$, $L^{({\bf e}_i)}_t(\lambda)$ is the partial derivative of the local time $L_t(\lambda)$ along the direction ${\bf e}_i$ and $L^{({\bf e}_i+{\bf e}_k)}_t(\lambda)$ is the partial derivative of the local time $L_t(\lambda)$ along the direction ${\bf e}_i+{\bf e}_k$, see \cite{hx} for definitions of these derivatives of $L_t(\lambda)$.
\end{theorem}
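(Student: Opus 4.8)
\medskip
\noindent\textit{Proof proposal.} The plan is to recast the quantity as a spatial increment of the local time $L_t(\cdot)$ tested against $f$, and then to pass to the limit by a Taylor expansion of $L_t$ in the $L^p$ sense; the restriction $0<H<\frac1{2\beta+d}$ will enter only through the smoothness of $\lambda\mapsto L_t(\lambda)$. I would start from the occupation density formula, $\int_0^t f(n^H(X_s-\lambda))\,ds=n^{-Hd}\int_{\mathbb R^d}f(y)L_t(\lambda+n^{-H}y)\,dy$ a.s.\ (legitimate since $f\in\Theta^\beta\subset L^1(\mathbb R^d)$ and $L_t(\cdot)$ is a.s.\ bounded on the compact range of $X$ over $[0,t]$), to rewrite the left-hand side of the claimed convergence as
\[
R_n:=n^{H\beta}\int_{\mathbb R^d}f(y)\big(L_t(\lambda+n^{-H}y)-L_t(\lambda)\big)\,dy .
\]

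The key input, which I would import from \cite{hx} (extending its arguments from the fractional Brownian motion to the whole class $G^{H,d}_L$ by the method of moments, using only the covariance function and the strong local nondeterminism (\ref{slndp})), is that $H(2\beta+d)<1$ forces $\lambda\mapsto L_t(\lambda)$ to be of class $C^\beta$ in the $L^p$ sense, for every $p\ge1$: the $L^p$ random fields $L^{(\mathbf e_i)}_t(\lambda)$ (and, when $\beta=2$, $L^{(\mathbf e_i+\mathbf e_k)}_t(\lambda)$) exist, $\sup_z\|L_t(z)\|_p<\infty$, all these fields have finite $L^p$ norms, and there are $\gamma>0$, $c_p<\infty$ such that the $\beta$-th order Taylor remainder of $L_t$ at $\lambda$ is $O(|z|^{\beta+\gamma})$ in $L^p$ as $|z|\to0$. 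The threshold is sharp because the $L^p$-modulus of continuity of $L_t$ in the spatial variable is governed by $\frac{1-Hd}{2H}$, which exceeds $\beta$ precisely when $H(2\beta+d)<1$.

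Granting this, the passage to the limit is routine. For $\beta=1$, set $\mathcal R:=\int_{\mathbb R^d}\big(\sum_{i=1}^dL^{(\mathbf e_i)}_t(\lambda)y_i\big)f(y)\,dy$ and $\rho_n(y):=L_t(\lambda+n^{-H}y)-L_t(\lambda)-n^{-H}\sum_iy_iL^{(\mathbf e_i)}_t(\lambda)$; then $R_n-\mathcal R=n^H\int f(y)\rho_n(y)\,dy$, so by Minkowski's integral inequality $\|R_n-\mathcal R\|_p\le\int|f(y)|g_n(y)\,dy$ with $g_n(y):=n^H\|\rho_n(y)\|_p$. Here $g_n(y)\to0$ for each $y$ by $L^p$-differentiability, and $g_n(y)\le c|y|$ uniformly in $n,y$: when $|y|\le n^H$ the remainder bound gives $g_n(y)\le c_pn^H(n^{-H}|y|)^{1+\gamma}=c_p(n^{-H}|y|)^\gamma|y|\le c_p|y|$, and when $|y|>n^H$ one bounds $g_n(y)\le n^H\big(\|L_t(\lambda+n^{-H}y)-L_t(\lambda)\|_p+n^{-H}|y|\sum_i\|L^{(\mathbf e_i)}_t(\lambda)\|_p\big)\le cn^H+c|y|\le c|y|$. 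Since $\int|f(y)||y|\,dy<\infty$ because $f\in\Theta^1$, dominated convergence yields $\|R_n-\mathcal R\|_p\to0$. For $\beta=2$ the only new point is that the moment condition $\int yf(y)\,dy=0$ built into $\Theta^2$ kills the first-order term, $n^{2H}\int f(y)\,n^{-H}\sum_iy_iL^{(\mathbf e_i)}_t(\lambda)\,dy=n^H\sum_iL^{(\mathbf e_i)}_t(\lambda)\int y_if(y)\,dy=0$; with $\mathcal R:=\int\big(\frac12\sum_{i,k}x_iL^{(\mathbf e_i+\mathbf e_k)}_t(\lambda)x_k\big)f(x)\,dx$ and $\sigma_n$ the second-order Taylor remainder of $L_t$ at $\lambda$ evaluated at $n^{-H}y$, one gets $R_n-\mathcal R=n^{2H}\int f(y)\sigma_n(y)\,dy$, and the same $|y|\le n^H$ versus $|y|>n^H$ splitting, plus the uniform bound $n^{2H}\|\sigma_n(y)\|_p\le c|y|^2$, the pointwise convergence $n^{2H}\|\sigma_n(y)\|_p\to0$, and $\int|f(y)||y|^2\,dy<\infty$, give $\|R_n-\mathcal R\|_p\to0$.

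The real work — and the main obstacle — is the smoothness statement of the second paragraph for a general $X\in G^{H,d}_L$. This is where (\ref{slndp}) is used: expanding $\mathbb E\big|L_t(\lambda+z)-L_t(\lambda)-(\text{lower-order terms})\big|^{2m}$ through the occupation/Fourier representation of $L_t$ and its derivatives and integrating out the Gaussian variables, the determinant of the covariance matrix of $(X_{s_1},\dots,X_{s_{2m}})$ is a product of one-step conditional variances, which (\ref{slndp}) bounds below by $\prod_j\kappa_H(s_j-s_{j-1})^{2H}$ (with $s_0=0$); the extra weights produced by the subtracted Taylor polynomial then raise the effective spatial dimension from $d$ to $d+2\beta$, so that the remaining integral over the simplex $0<s_1<\dots<s_{2m}<t$ converges exactly when $H(2\beta+d)<1$. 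At the endpoint this integral diverges logarithmically, which is the source of the $(\ln n)^{-1/2}$ normalization in Theorem~\ref{th2} and of the Gaussian limit in Theorem~\ref{th1}.
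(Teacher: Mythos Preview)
Your approach is correct but takes a different route from the paper's. You rewrite the quantity via the occupation density formula as a spatial increment of $L_t(\cdot)$, then Taylor-expand the local time in the space variable and conclude by Minkowski plus dominated convergence in the $y$-integral; the hypothesis $H(2\beta+d)<1$ enters through the $L^p$-$C^\beta$ regularity of $\lambda\mapsto L_t(\lambda)$ imported from \cite{hx}. The paper instead works entirely on the Fourier side: it sets $A^\beta_{n,t}$ equal to the difference between the left-hand side and the claimed limit, expresses $\mathbb E|A^\beta_{n,t}|^{2m}$ as a single integral over $[0,t]^{2m}\times\mathbb R^{2md}$, and Taylor-expands $\widehat f(u/n^H)$ at $0$ rather than $L_t(\lambda+z)$ at $\lambda$. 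The integrand is then bounded pointwise by $c\prod_j|u^j|^\beta$ times the Gaussian factor, the local nondeterminism (\ref{lndp}) makes the dominating integral finite precisely when $1-2H\beta-Hd>0$, and dominated convergence (now in $(u,s)$) yields $\mathbb E|A^\beta_{n,t}|^{2m}\to 0$ directly.

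The two arguments are essentially dual: your spatial Taylor expansion of $L_t$ corresponds, under Fourier, to the paper's expansion of $\widehat f$. Your version is more modular---the regularity of $L_t$ is a clean black box and the convergence step is a two-line DCT---while the paper's version is more self-contained, never isolating the Taylor remainder of $L_t$ as a separate object and establishing the domination in place. One caution: you need not just $L^p$-differentiability but a \emph{quantitative} remainder bound $\|L_t(\lambda+z)-P_\beta(z)\|_p\le c_p|z|^{\beta+\gamma}$ for some $\gamma>0$ and all small $z$; you should check that \cite{hx} (or its extension to $G^{H,d}_L$ that you sketch) actually delivers this H\"older-type estimate, since without it the domination $g_n(y)\le c|y|^\beta$ on $\{|y|\le n^H\}$ does not follow. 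If that estimate is not readily available in \cite{hx}, the shortest fix is precisely the paper's moment computation, which you have in effect rediscovered in your last paragraph.
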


\begin{remark} The limit in (\ref{DHdf}) exists when $f\in\Theta^{\beta}$ for some $\beta\in \{1,2\}$ and $1-Hd=2H\beta$. In fact, making proper change of variables and applying the L'H$\hat{o}$pital's rule,
\begin{align*}
&\lim_{n\to\infty}\frac{1}{\ln n}   \int_0^{n} \int_{\mathbb{R}^d}\Big|\widehat{f}(y)-\widehat{f}(0)\Big|^2e^{-\frac{\sigma}{2}|y|^2s^{2H}}dyds \\ 
&=\frac{1}{\sigma^{\frac{1}{2H}}}\lim_{t\to\infty}\frac{1}{\ln t}   \int_0^{t} \int_{\mathbb{R}^d}\Big|\widehat{f}(y)-\widehat{f}(0)\Big|^2e^{-\frac{1}{2}|y|^2 u^{2H}}dydu\\
&=\frac{1}{\sigma^{\frac{1}{2H}}}\lim_{t\to\infty}   t\int_{\mathbb{R}^d}\Big|\widehat{f}(y)-\widehat{f}(0)\Big|^2e^{-\frac{1}{2}|y|^2t^{2H}}dy\\
&=\frac{1}{\sigma^{\frac{1}{2H}}}\lim_{t\to\infty}   t^{1-Hd}\int_{\mathbb{R}^d}\Big|\widehat{f}(\frac{x}{t^H})-\widehat{f}(0)\Big|^2e^{-\frac{1}{2}|x|^2}dx\\
&=\frac{1}{\sigma^{\frac{1}{2H}}}\lim_{t\to\infty}  \int_{\mathbb{R}^d}\Big| \int_{\mathbb{R}^d} f(z)t^{H\beta}(e^{\iota z\cdot \frac{x}{t^H}}-1) dz\Big|^2e^{-\frac{1}{2}|x|^2}dx,
\end{align*}
where $\iota=\sqrt{-1}$.  

For the case $\beta=1$,  by the dominated convergence theorem, 
\begin{align*}
\lim_{t\to\infty}  \int_{\mathbb{R}^d}\Big| \int_{\mathbb{R}^d} f(z)t^{H}(e^{\iota z\cdot \frac{x}{t^H}}-1) dz\Big|^2e^{-\frac{1}{2}|x|^2}dx=\int_{\mathbb{R}^d}\Big| \int_{\mathbb{R}^d} f(z) (z\cdot x) dz\Big|^2e^{-\frac{1}{2}|x|^2}dx.
\end{align*}
For the case $\beta=2$,  by $\int_{\mathbb{R}^d} uf(u) du=0$ and the dominated convergence theorem,  
\begin{align*}
&\lim_{t\to\infty}  \int_{\mathbb{R}^d}\Big| \int_{\mathbb{R}^d} f(z)t^{2H}(e^{\iota z\cdot \frac{x}{t^H}}-1) dz\Big|^2e^{-\frac{1}{2}|x|^2}dx\\
&=\lim_{t\to\infty}  \int_{\mathbb{R}^d}\Big| \int_{\mathbb{R}^d} f(z)t^{2H}(e^{\iota z\cdot \frac{x}{t^H}}-1-\iota z\cdot \frac{x}{t^H}) dz\Big|^2e^{-\frac{1}{2}|x|^2}dx\\
&= \int_{\mathbb{R}^d}\Big| \int_{\mathbb{R}^d} f(z) \frac{(z\cdot x)^2}{2} dz\Big|^2e^{-\frac{1}{2}|x|^2}dx.
\end{align*}
Combining these results gives the equality (\ref{DHdf}).

\end{remark}

\begin{remark}
With some proper modifications, we believe that Theorems \ref{th2} and \ref{th3} are also true for other $\beta$s in $(0,2]$. However,  this is more complicated than the case $\beta\in \{1,2\}$. For example, if $f$ is the density function of a $d$-dimensional rotationally invariant $\beta$-stable random variable with $\beta\in (0,2)$, then $f$ does not satisfy $\int_{\mathbb{R}^d} |f(x)||x|^{\beta}dx<\infty$. To obtain similar results as in Theorems \ref{th2} and \ref{th3}, we need to replace $\int_{\mathbb{R}^d} |f(x)||x|^{\beta}dx<\infty$ in the definition of $\Theta^\beta$ by $\int_{\mathbb{R}^d} |f(x)||x|^{\beta-\epsilon}dx<\infty$ for any $\epsilon\in (0,\beta)$. If $f$ is the density function of a symmetric random variable in the domain of attraction of the $\beta$-stable law with $\beta\in (0,2)$, we should not only make the modification in the definition of $\Theta^\beta$ but also revise the normalizing factors. See \cite{IL} for properties of random variables in the domain of attraction of stable law.  
\end{remark}

After some preliminaries in Section 2,  in Section 3  we develop an enhanced chaining argument. Section 4 is devoted to the 
proofs of Theorems  \ref{th1}, \ref{th2} and \ref{th3}. Finally, a tractable sufficient condition for the strong local nondeterminism property is given in the Appendix A and a sufficient condition for the distribution function of a random vector to be determined by its moments in the Appendix B. Throughout this paper, if not mentioned otherwise, the letter $c$, 
with or without a subscript, denotes a generic positive finite constant whose exact value is independent of $n$ and may change from line to line. Moreover, we use $x\cdot y$ to denote the usual inner product in $\mathbb{R}^d$ and $\iota$ to denote $\sqrt{-1}$.

\section{Preliminaries}

In this section, we will give a few lemmas which are important in the proofs of our main results. Recall that $X=\{X_t=(X^1_t,\dots, X^d_t),\, t\geq 0\}$ is a $d$-dimensional centered Gaussian process whose components are independent copies of some one-dimensional centered Gaussian process. The first lemma shows that the self-similarity, strong local nondeterminism property (\ref{slndp}) and Assumption $({\bf A})$ imply a local nondeterminism property. 
\begin{lemma} \label{lema1} 
Suppose that $X$ satisfies the self-similarity with index $H\in (0,1)$, the strong local nondeterminism property (\ref{slndp}) and Assumption $({\bf A})$, then there exists a positive constant $\kappa_{H,m}$ such that for any integer $m\ge 1$, any times $0=s_0<s_1\leq \dots \leq s_m<\infty$ and $x_i\in\mathbb{R}^d$ with $1\leq i\leq m$, 
\begin{equation} \label{lndp}
\Var\Big(\sum^m_{i=1} x_i\cdot (X_{s_i}-X_{s_{i-1}})\Big)\geq \kappa_{H,m}\sum^m_{i=1} |x_i|^2(s_i-s_{i-1})^{2H}.
\end{equation}
\end{lemma}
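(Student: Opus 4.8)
\emph{Proof proposal.} The plan is to reduce the statement to one dimension, record a uniform \emph{upper} bound on the variance of a single increment, and then prove the estimate by induction on $m$, feeding the induction hypothesis back in to tame the regression of the newest increment onto the older ones.

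First I would reduce to $d=1$. Writing $x_i=(x^1_i,\dots,x^d_i)$ and using the independence of the components $X^1,\dots,X^d$, one has $\Var\big(\sum_i x_i\cdot(X_{s_i}-X_{s_{i-1}})\big)=\sum_{k=1}^d\Var\big(\sum_i x^k_i(X^k_{s_i}-X^k_{s_{i-1}})\big)$ and $\sum_i|x_i|^2(s_i-s_{i-1})^{2H}=\sum_{k=1}^d\sum_i(x^k_i)^2(s_i-s_{i-1})^{2H}$, and each component process inherits self-similarity with index $H$, Assumption (A), and \eqref{slndp} (the conditional covariance matrix of a Gaussian vector with i.i.d.\ components is scalar). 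Discarding equal consecutive times (they contribute a vanishing term to both sides), the task becomes: for a one-dimensional centered self-similar process satisfying \eqref{slndp} and (A), for $0=s_0<s_1<\dots<s_m$ and reals $a_i$, with $Y_i:=X_{s_i}-X_{s_{i-1}}$ and $\sigma_i^2:=(s_i-s_{i-1})^{2H}$, show $\Var\big(\sum_{i=1}^m a_iY_i\big)\ge\kappa_{H,m}\sum_{i=1}^m a_i^2\sigma_i^2$. Along the way I would record a uniform bound $\E[Y_i^2]\le c_0\sigma_i^2$, with $c_0$ depending only on $H$ and the constants in (A): when $s_i-s_{i-1}\le s_{i-1}/\eta$ this is the upper half of (A), and when $s_i-s_{i-1}>s_{i-1}/\eta$ self-similarity gives $\Var(X_{s_i}-X_{s_{i-1}})=s_{i-1}^{2H}\Var(X_{s_i/s_{i-1}}-X_1)\le 4\Var(X_1)s_i^{2H}\le c_0(s_i-s_{i-1})^{2H}$, since then $s_i<(\eta+1)(s_i-s_{i-1})$; note $\Var(X_1)>0$ by \eqref{slndp}.

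The core is an induction on $m$. For $m=1$, $\Var(a_1Y_1)=a_1^2s_1^{2H}\Var(X_1)$. Assuming the bound for $m-1$ with constant $\kappa_{H,m-1}$, set $V:=\Var\big(\sum_{i=1}^m a_iY_i\big)$ and derive two lower bounds. First, conditioning on $X_{s_1},\dots,X_{s_{m-1}}$ (which determine $Y_1,\dots,Y_{m-1}$) leaves $\sum_i a_iY_i$ equal to $a_mX_{s_m}$ plus a measurable quantity, so by \eqref{slndp},
\[
V \;\ge\; a_m^2\,\Var\big(X_{s_m}\mid X_{s_1},\dots,X_{s_{m-1}}\big) \;\ge\; \kappa_H\,a_m^2\,\sigma_m^2 .
\]
Second, writing $\E[Y_m\mid Y_1,\dots,Y_{m-1}]=\sum_{j<m}\theta_jY_j$, projection and the induction hypothesis give $V\ge\kappa_{H,m-1}\sum_{j<m}(a_j+a_m\theta_j)^2\sigma_j^2$; using $(a_j+a_m\theta_j)^2\ge\tfrac12a_j^2-a_m^2\theta_j^2$ and then the induction hypothesis once more applied to $\sum_{j<m}\theta_jY_j$ together with the uniform increment bound, $\kappa_{H,m-1}\sum_{j<m}\theta_j^2\sigma_j^2\le\Var\big(\sum_{j<m}\theta_jY_j\big)\le\E[Y_m^2]\le c_0\sigma_m^2$, whence $V\ge\tfrac{\kappa_{H,m-1}}{2}\sum_{j<m}a_j^2\sigma_j^2-c_0a_m^2\sigma_m^2$. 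Taking $\lambda V+(1-\lambda)V$ with $\lambda<1$ close enough to $1$ that $\lambda\kappa_H-(1-\lambda)c_0>0$ yields $V\ge\kappa_{H,m}\sum_{i=1}^m a_i^2\sigma_i^2$ with $\kappa_{H,m}=\min\big(\tfrac{\kappa_H}{2},c\kappa_{H,m-1}\big)>0$, closing the induction.

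The one genuinely delicate point is the second lower bound: the regression coefficients $\theta_j$ are a priori unconstrained, so $(a_j+a_m\theta_j)^2$ can be much smaller than $a_j^2$ (even zero), and the gain from the induction hypothesis seems to collapse. The resolution is to apply the induction hypothesis to $\sum_{j<m}\theta_jY_j$ \emph{itself}, which bounds $\sum_j\theta_j^2\sigma_j^2$ by $\Var(\sum_j\theta_jY_j)\le\E[Y_m^2]$, and then to control the latter by $\sigma_m^2$ through the uniform increment estimate; the error term $c_0a_m^2\sigma_m^2$ it produces is absorbed by the first bound $\kappa_Ha_m^2\sigma_m^2$ after the convex combination. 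Everything else is bookkeeping; in particular Assumption (B) is not needed and Assumption (A) enters only via the upper bound on increment variances.
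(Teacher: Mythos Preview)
Your argument is correct. The paper's own proof is a two-line sketch: it records exactly the same preliminary step you do (self-similarity plus Assumption~(A) yield a uniform upper bound $\E|X_t-X_s|^2\le c_1(t-s)^{2H}$) and then defers the passage from strong local nondeterminism plus this increment bound to the quadratic-form estimate \eqref{lndp} by citing ``similar arguments as in the proof of Theorem~2.6 in \cite{n}''. Your induction on $m$---conditioning to peel off the top increment via \eqref{slndp}, projecting to get the residual $m-1$ term, and then re-using the induction hypothesis on the regression coefficients $\theta_j$ together with $\Var(\sum_j\theta_jY_j)\le\E Y_m^2\le c_0\sigma_m^2$ to control the cross term---is precisely the standard mechanism behind that reference, so you are supplying the details the paper omits rather than taking a different route. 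The observation that Assumption~(B) is not needed, and that (A) enters only through the upper increment bound, is accurate and consistent with the paper's sketch.
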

\begin{proof}
By the self-similarity and Assumption $({\bf A})$, there exists a positive constant $c_1$ such that $\mathbb{E}|X_t-X_s|^2\leq c_1(t-s)^{2H}$ for any $t,s\geq 0$. Then, using similar arguments as in the proof of Theorem 2.6 in \cite{n} gives the desired result.
\end{proof}

The next lemma gives a formula for moments of the increments of the process  
 $\left\{W (L_{t}(\lambda)), t\ge 0\right\}$ on disjoint intervals, where $W$ is a real-valued standard Brownian motion
independent of the Gaussian process $X$.
\begin{lemma} \label{lema2} Fix a finite number of disjoint intervals 
$(a_i, b_i]$ in $[0,\infty)$, where $i = 1, \dots, N$  and $b_i \leq a_{i+1}$. 
Consider a multi-index $\mathbf{m} = (m_1, \dots, m_N)$, where $m_i \geq 1$ and 
$1 \leq  i \leq N$. Then 
\[
\E \Big(\prod_{i=1}^N \big[W (L_{b_i} (\lambda) ) - W (L_{a_i} (\lambda))\big]^{m_i} \Big)
\]
is equal to 
\[
\bigg(\prod\limits_{i=1}^N \frac{m_i!}{ 2^{\frac{m_i}{2}}   (2\pi )^{\frac{m_id}{2} }   (\frac{m_i}{2})!} \bigg)\displaystyle \int_{\prod\limits_{i=1}^N [a_i ,b_i]^{\frac{m_i}{2}} }\displaystyle \int_{\mathbb{R}^{\frac{|\mathbf{m}|d}{2}}}  \exp\Big(-\iota \lambda\cdot \sum\limits^N_{i=1} \sum\limits^{\frac{m_i}{2}}_{j=1}x^j_i-\frac{1}{2}\Var\big(\sum\limits^N_{i=1}\sum\limits^{\frac{m_i}{2}}_{j=1}x^j_i\cdot X_{u^j_i}\big)\Big)\, dx\, du 
\]
if all $m_i$ are even and $0$ otherwise, where $|\mathbf{m}|=\sum\limits^N_{i=1} m_i$. Moreover, if all $m_i$ are even, then 
\[
\E \Big(\prod_{i=1}^N \big[W (L_{b_i} (\lambda) ) - W (L_{a_i} (\lambda))\big]^{m_i} \Big)\leq \Big(\frac{b_N^{1-H}\Gamma(1-Hd)}{2(2\pi \kappa_{H})^{\frac{d}{2} }} \Big)^{\frac{|\mathbf{m}|}{2}}  \Big(\prod\limits_{i=1}^N  \frac{m_i!}{\Gamma\big(\frac{m_i}{2}(1-Hd)+1\big)} \Big),
\]
where $\Gamma(\cdot)$ is the Gamma function.
\end{lemma}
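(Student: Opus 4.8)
The plan is to condition on the whole path of the Gaussian process $X$. Since $W$ is independent of $X$ and $t\mapsto L_t(\lambda)$ is $\sigma(X)$-measurable and nondecreasing, conditionally on $X$ the variables $W(L_{b_i}(\lambda))-W(L_{a_i}(\lambda))$, $1\le i\le N$, are independent, with $W(L_{b_i}(\lambda))-W(L_{a_i}(\lambda))$ distributed as $N\big(0,\,L_{b_i}(\lambda)-L_{a_i}(\lambda)\big)$; indeed $0\le L_{a_1}(\lambda)\le L_{b_1}(\lambda)\le L_{a_2}(\lambda)\le\cdots$ displays them as increments of $W$ over non-overlapping intervals. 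Hence the conditional expectation of the product vanishes as soon as some $m_i$ is odd, and when $m_i=2k_i$ for every $i$ the elementary identity $\E Z^{2k}=\frac{(2k)!}{2^{k}k!}(\Var Z)^{k}$ for a centered Gaussian $Z$ yields, after taking expectation in $X$,
\[
\E\Big(\prod_{i=1}^N\big[W(L_{b_i}(\lambda))-W(L_{a_i}(\lambda))\big]^{m_i}\Big)=\Big(\prod_{i=1}^N\frac{m_i!}{2^{m_i/2}(m_i/2)!}\Big)\,\E\Big(\prod_{i=1}^N\big(L_{b_i}(\lambda)-L_{a_i}(\lambda)\big)^{m_i/2}\Big).
\]

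Next I would represent the powers of the local-time increments by Fourier inversion. If $p_\varepsilon$ denotes the Gaussian kernel on $\R^d$ of variance $\varepsilon$, then $L_{b_i}(\lambda)-L_{a_i}(\lambda)=\lim_{\varepsilon\to0}\int_{a_i}^{b_i}p_\varepsilon(X_s-\lambda)\,ds$ in every $L^p$, while $p_\varepsilon(x)=(2\pi)^{-d}\int_{\R^d}e^{\iota y\cdot x-\varepsilon|y|^2/2}\,dy$ is absolutely convergent. Expanding the product of the mollified increments into a $\frac{|\mathbf{m}|d}{2}$-fold integral, using Fubini (legitimate because of the factor $e^{-\varepsilon|y|^2/2}$) to move $\E$ inside, evaluating $\E\exp\big(\iota\sum_{i,j}x_i^j\cdot X_{u_i^j}\big)=\exp\big(-\tfrac12\Var(\sum_{i,j}x_i^j\cdot X_{u_i^j})\big)$, and finally letting $\varepsilon\to0$ by dominated convergence — with dominating function $\exp(-\tfrac12\Var(\cdots))$, whose integrability in $x$ and $u$ is exactly the estimate established below and uses $Hd<1$ — one obtains the stated integral formula. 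The prefactor $\prod_i\frac{m_i!}{2^{m_i/2}(2\pi)^{m_id/2}(m_i/2)!}$ absorbs both the Gaussian-moment constants and the $(2\pi)^{-d}$ coming from each of the $|\mathbf{m}|/2$ Fourier inversions.

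For the upper bound I would discard the modulus-one oscillatory factor $e^{-\iota\lambda\cdot\sum x_i^j}$ and estimate the inner Gaussian integral. Relabel the $|\mathbf{m}|/2=:K$ time points $\{u_i^j\}$ in increasing order as $0<r_1<\cdots<r_K$, with matching frequencies $\xi_l\in\R^d$. Conditioning on $X_{r_1},\dots,X_{r_{K-1}}$ and writing $X_{r_K}=\sum_{l<K}c_l X_{r_l}+\Delta_K$, where $\sum_{l<K}c_lX_{r_l}=\E[X_{r_K}\mid X_{r_1},\dots,X_{r_{K-1}}]$ (with scalar coefficients $c_l$, since the components of $X$ are i.i.d.) and $\Delta_K$ is the orthogonal remainder, we get $\sum_l\xi_l\cdot X_{r_l}=\sum_{l<K}(\xi_l+c_l\xi_K)\cdot X_{r_l}+\xi_K\cdot\Delta_K$ with the two summands uncorrelated, so
\[
\Var\Big(\sum_{l=1}^K\xi_l\cdot X_{r_l}\Big)=\Var\Big(\sum_{l<K}(\xi_l+c_l\xi_K)\cdot X_{r_l}\Big)+|\xi_K|^2\,\Var\big(X^1_{r_K}\mid X^1_{r_1},\dots,X^1_{r_{K-1}}\big),
\]
and the last conditional variance is $\ge\kappa_H(r_K-r_{K-1})^{2H}$ by \eqref{slndp}. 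Iterating this peeling-off and composing the successive shifts $\xi_l\mapsto\xi_l+c_l\xi_K$ into a single unipotent (hence Jacobian-one) change of variables $\xi\mapsto\eta$ gives $\Var(\sum_l\xi_l\cdot X_{r_l})\ge\kappa_H\sum_{l=1}^K(r_l-r_{l-1})^{2H}|\eta_l|^2$ (with $r_0:=0$), and therefore
\[
\int_{\R^{Kd}}\exp\Big(-\tfrac12\Var\big(\textstyle\sum_l\xi_l\cdot X_{r_l}\big)\Big)\,d\xi\le\Big(\frac{2\pi}{\kappa_H}\Big)^{Kd/2}\prod_{l=1}^K(r_l-r_{l-1})^{-Hd}.
\]
(One may instead quote Lemma \ref{lema1} after Abel summation, at the price of $\kappa_{H,m}$ in place of $\kappa_H$.)

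It remains to integrate $\prod_{l=1}^K(r_l-r_{l-1})^{-Hd}$ over $u\in\prod_i[a_i,b_i]^{m_i/2}$. As the intervals are ordered, the sorted sequence splits into consecutive blocks; bounding each inter-block gap $r^{(i)}_1-r^{(i-1)}_{k_{i-1}}$ from below by $r^{(i)}_1-a_i$ (so the $-Hd$ power is bounded above) decouples the integral into a product over $i$, sorting within each block introduces a factor $(m_i/2)!$, and the Dirichlet-type identity $\int_{\{0\le s_1\le\cdots\le s_k\le T\}}\prod_{j=1}^{k}(s_j-s_{j-1})^{-\gamma}\,ds=\frac{\Gamma(1-\gamma)^{k}}{\Gamma(k(1-\gamma)+1)}\,T^{k(1-\gamma)}$, applied with $\gamma=Hd$ and $T=b_i-a_i\le b_N$, finishes the computation. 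Multiplying the three estimates and simplifying the constants gives the claimed inequality. I expect the main difficulties to be (i) making the Fourier-inversion step rigorous, since the naive integral is only conditionally convergent — this forces the $\varepsilon$-mollification together with the $L^p$-approximation of the local time, and the passage $\varepsilon\to0$ is circular-looking but is resolved because the upper bound itself supplies the dominating function; and (ii) the bookkeeping of the inter-block gaps, which is precisely what produces the single power of $b_N$ and the product of Gamma functions, the remainder being routine Gaussian calculus.
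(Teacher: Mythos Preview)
Your proof is correct and follows essentially the same route as the paper's: condition on $\mathcal{F}^X$, use the Gaussian moment identity and independence of the increments of $W$ over the disjoint intervals $[L_{a_i}(\lambda),L_{b_i}(\lambda)]$, then plug in the Fourier representation of the local time; for the bound, discard the oscillatory factor and use the strong local nondeterminism \eqref{slndp} together with a Dirichlet-type integral. The only difference is packaging: where you spell out the $\varepsilon$-mollification for the Fourier step and carry out the peeling/SLND argument and the Dirichlet computation by hand, the paper simply writes down the local-time Fourier formula and, for the inequality, cites Lemma~3.8 of \cite{clrs} and Lemma~A.2 of \cite{hx} (which are precisely the SLND Gaussian-integral bound and the simplex Dirichlet integral you reproduced).
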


\begin{proof}
It is easy to see that when one of $m_i$ is odd, then the expectation is $0$. Suppose now that all $m_i$  are even. Denote by $\mathcal{F}^X$  the $\si$-algebra generated by the Gaussian process $X$.  Since $W$ is a standard Brownian motion independent of $X$,
\begin{align*}
&\E \prod_{i=1}^N  \Big[W (L_{b_i} (\lambda) ) - W (L_{a_i} (\lambda)) \Big]^{m_i} \\
&=\E \bigg[\E\Big( \prod_{i=1}^N [W (L_{b_i} (\lambda) ) - W (L_{a_i} (\lambda)) ]^{m_i} \big|\mathcal{F}^X\Big) \bigg]\\
&=\Big( \prod_{i=1}^N \frac{m_i!}{ 2^{\frac{m_i}{2}} (\frac{m_i}{2})!} \Big)
\E\prod_{i=1}^N  [ L_{b_i} (\lambda)  -  L_{a_i} (\lambda) ]^{\frac{m_i}{2}}\\
&=\Big(\prod\limits_{i=1}^N \frac{m_i!}{ 2^{\frac{m_i}{2}}   (2\pi )^{\frac{m_id}{2} }   (\frac{m_i}{2})!} \Big)\int_{\prod\limits_{i=1}^N [a_i ,b_i]^{\frac{m_i}{2}} } \int_{\mathbb{R}^{\frac{|\mathbf{m}|d}{2}}}  \exp\Big(-\iota \lambda\cdot \sum\limits^N_{i=1} \sum^{\frac{m_i}{2}}_{j=1}x^j_i\Big) \\
&\qquad\qquad\qquad\times \exp\Big(-\frac{1}{2}\Var\big(\sum\limits^N_{i=1}\sum^{\frac{m_i}{2}}_{j=1}x^j_i\cdot X_{u^j_i}\big)\Big)\, dx\, du.
\end{align*}

When all $m_i$ are even,
\begin{align*}
&\E \prod_{i=1}^N  \Big[W (L_{b_i} (\lambda) ) - W (L_{a_i} (\lambda))\Big]^{m_i} \\
&\leq \bigg(\prod\limits_{i=1}^N \frac{m_i!}{ 2^{\frac{m_i}{2}}   (2\pi )^{\frac{m_id}{2} }   (\frac{m_i}{2})!} \bigg)\int_{\prod\limits_{i=1}^N [a_i ,b_i]^{\frac{m_i}{2}} } \int_{\mathbb{R}^{\frac{|\mathbf{m}|d}{2}}}  \exp\Big(-\frac{1}{2}\Var\big(\sum\limits^N_{i=1}\sum^{\frac{m_i}{2}}_{j=1}x^j_i\cdot X_{u^j_i}\big)\Big)\, dx\, du\\
&\leq \bigg(\frac{b_N^{1-Hd}\Gamma(1-Hd)}{ 2 (2\pi \kappa_H)^{\frac{d}{2} }} \bigg)^{\frac{|\mathbf{m}|}{2}}  \Big(\prod\limits_{i=1}^N  \frac{m_i!}{\Gamma\big(\frac{m_i}{2}(1-Hd)+1\big)} \Big),
\end{align*}
where in the last inequality we use the strong local nondeterminism property (\ref{slndp}), Lemma 3.8 in \cite{clrs} and Lemma A.2. in \cite{hx}. 
\end{proof}

As a consequence, with the help of the Stirling's formula $\lim\limits_{a\to\infty}\frac{\Gamma(a+1)}{a^a e^{-a} \sqrt{2\pi a}}=1$ and Theorem \ref{moment} in the Appendix B,  the law of the random vector  $\big(W (L_{b_i} (\lambda) ) - W (L_{a_i} (\lambda)): 1\le i\le N\big)$ is determined by
the moments computed in the above lemma.
 
 The last two lemmas are needed in obtaining the enhanced chaining argument, see Propositions \ref{chain1} and \ref{chain2}. Lemma \ref{f} is used in the proof of Proposition \ref{chain1} and both are used in the proof of Proposition \ref{chain2}.

\begin{lemma} \label{f}
Suppose that $f\in \Theta^{\beta}$. If $\beta\in (0,1]$, then, for any $\alpha\in [0,\beta]$, there exists a positive constant $c_{\alpha,1}$ depends on $\alpha$ such that 
\[
|\widehat{f}(x_1-x_2)-\widehat{f}(-x_2)|\leq c_{\alpha,1}(|x_1|^{\alpha}\wedge 1)
\] 
for all $x_1,x_2\in\mathbb{R}^d$. If $\beta\in (1,2]$, then for any $\alpha\in [\theta,\beta]$ and $\theta\in [0,1]$, there exists a positive constant $c_{\alpha,2}$ depends on $\alpha$ such that 
\[
|\widehat{f}(x_1-x_2)-\widehat{f}(-x_2)|\leq c_{\alpha,2}\Big((|x_1|^{\alpha}+|x_1|^{\theta}|x_2|^{\alpha-\theta})\wedge 1\Big)
\] 
for all $x_1,x_2\in\mathbb{R}^d$. 
\end{lemma}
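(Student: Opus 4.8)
The plan is to start from the elementary identity
\[
\widehat f(x_1-x_2)-\widehat f(-x_2)=\int_{\R^d}f(z)\,e^{-\iota z\cdot x_2}\bigl(e^{\iota z\cdot x_1}-1\bigr)\,dz ,
\]
which follows immediately from the definition $\widehat f(y)=\int_{\R^d}f(z)e^{\iota z\cdot y}\,dz$, and to control the integrand by two scalar inequalities: $|e^{\iota w}-1|\le 2\,(|w|^{\gamma}\wedge 1)$ for every $\gamma\in[0,1]$, and $|e^{\iota w}-1-\iota w|\le 2\,|w|^{\gamma}$ for every $\gamma\in[1,2]$. Both are obtained from the standard bounds $|e^{\iota w}-1|\le\min(2,|w|)$ and $|e^{\iota w}-1-\iota w|\le\min\bigl(\tfrac12 w^2,\,2|w|\bigr)$ by comparing the two terms of the minimum according as $|w|\le1$ or $|w|>1$. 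Throughout we use that $\int_{\R^d}|f(z)|\,|z|^{\gamma}\,dz<\infty$ for all $\gamma\in[0,\beta]$, which holds since $|z|^{\gamma}\le 1+|z|^{\beta}$ and, by hypothesis, $f\in L^{1}(\R^d)$ with $\int_{\R^d}|f(z)||z|^{\beta}\,dz<\infty$.

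For $\beta\in(0,1]$ and $\alpha\in[0,\beta]$, I would bound $|e^{\iota z\cdot x_1}-1|\le 2\bigl(|z|^{\alpha}|x_1|^{\alpha}\wedge 1\bigr)$ inside the integral. Keeping the factor $1$ gives $|\widehat f(x_1-x_2)-\widehat f(-x_2)|\le 2\int_{\R^d}|f(z)|\,dz$, while keeping $|z|^{\alpha}|x_1|^{\alpha}$ gives $|\widehat f(x_1-x_2)-\widehat f(-x_2)|\le 2|x_1|^{\alpha}\int_{\R^d}|f(z)||z|^{\alpha}\,dz$; taking the smaller of the two right-hand sides yields the asserted estimate with $c_{\alpha,1}$ a suitable multiple of $\max\{\int|f|,\int|f||z|^{\alpha}\}$.

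For $\beta\in(1,2]$, $\theta\in[0,1]$ and $\alpha\in[\theta,\beta]$, the bound $|\widehat f(x_1-x_2)-\widehat f(-x_2)|\le 2\int_{\R^d}|f(z)|\,dz$ is obtained exactly as above, so it remains to prove $|\widehat f(x_1-x_2)-\widehat f(-x_2)|\le c\,(|x_1|^{\alpha}+|x_1|^{\theta}|x_2|^{\alpha-\theta})$. If $\alpha\le 1$ this already follows from the previous step, since then $|\widehat f(x_1-x_2)-\widehat f(-x_2)|\le c|x_1|^{\alpha}$, which is dominated by the right-hand side (note $\alpha\ge\theta$, so $\alpha-\theta\ge 0$). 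If $\alpha\in[1,\beta]$, I invoke the extra hypothesis $\int_{\R^d}zf(z)\,dz=0$, i.e.\ $\int_{\R^d}f(z)\,\iota z\cdot x_1\,dz=0$, to rewrite
\[
\widehat f(x_1-x_2)-\widehat f(-x_2)=\int_{\R^d}f(z)\Bigl[e^{-\iota z\cdot x_2}\bigl(e^{\iota z\cdot x_1}-1-\iota z\cdot x_1\bigr)+\iota(z\cdot x_1)\bigl(e^{-\iota z\cdot x_2}-1\bigr)\Bigr]dz .
\]
The first integral is at most $2|x_1|^{\alpha}\int_{\R^d}|f(z)||z|^{\alpha}\,dz$ by $|e^{\iota z\cdot x_1}-1-\iota z\cdot x_1|\le 2|z\cdot x_1|^{\alpha}$ (using $\alpha\in[1,2]$); the second is at most $2|x_1||x_2|^{\alpha-1}\int_{\R^d}|f(z)||z|^{\alpha}\,dz$ by $|z\cdot x_1|\le|z||x_1|$ and $|e^{-\iota z\cdot x_2}-1|\le 2|z\cdot x_2|^{\alpha-1}$ (using $\alpha-1\in[0,1]$). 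Hence $|\widehat f(x_1-x_2)-\widehat f(-x_2)|\le c\,(|x_1|^{\alpha}+|x_1||x_2|^{\alpha-1})$. Finally, if $|x_1|\ge|x_2|$ then $|x_1||x_2|^{\alpha-1}\le|x_1|^{\alpha}$ (as $\alpha-1\ge0$), and if $|x_1|\le|x_2|$ then $|x_1||x_2|^{\alpha-1}=|x_1|^{\theta}\cdot|x_1|^{1-\theta}|x_2|^{\alpha-1}\le|x_1|^{\theta}|x_2|^{\alpha-\theta}$ (as $1-\theta\ge0$); either way the mixed term is absorbed into $|x_1|^{\alpha}+|x_1|^{\theta}|x_2|^{\alpha-\theta}$. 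Combining this bound with the uniform bound $2\int|f|$ and taking the minimum gives the claim.

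There is no deep analytic obstacle; once the identity and the two scalar inequalities are in place the proof is bookkeeping. The only point that genuinely requires care is matching the ranges of $\alpha$ and $\theta$ to the right regime: the cancellation coming from $\int_{\R^d}zf\,dz=0$ is exactly what lets one reach second-order powers $|x_1|^{\alpha}$ with $\alpha$ up to $\beta$, it is unnecessary (and unavailable in general) when $\alpha<1$, and the general exponent split $(\theta,\alpha-\theta)$ costs nothing beyond the $\theta=1$ estimate thanks to the elementary comparison in the last step.
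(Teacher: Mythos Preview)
Your proof is correct and follows essentially the same approach as the paper: the same Fourier identity, the same decomposition via $\int_{\R^d} zf(z)\,dz=0$ to isolate $e^{-\iota z\cdot x_2}(e^{\iota z\cdot x_1}-1-\iota z\cdot x_1)$ and $\iota(z\cdot x_1)(e^{-\iota z\cdot x_2}-1)$, the same intermediate bound $c(|x_1|^{\alpha}+|x_1||x_2|^{\alpha-1})$, and the same final comparison of $|x_1|$ and $|x_2|$ to absorb the mixed term into $|x_1|^{\alpha}+|x_1|^{\theta}|x_2|^{\alpha-\theta}$. The paper simply dismisses the cases $\beta\in(0,1]$ and $\alpha\in[\theta,1]$ as well known, whereas you spell them out; otherwise the arguments are identical.
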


\begin{proof} The case $\beta\in (0,1]$ is well-known. We only consider the case $\beta\in (1,2]$. In this case, $\int_{\R^d} xf(x)dx=0$. For $\alpha\in [\theta,1]$,  it is easy to see that $|\widehat{f}(x_1-x_2)-\widehat{f}(-x_2)|$ is less than a constant multiple of $|x_1|^{\alpha}\wedge 1$.  For $\alpha\in (1,\beta]$, 
\begin{align*}
&\left|\widehat{f}(x_1-x_2)-\widehat{f}(-x_2)\right|\\
&=\Big| \int_{\R^d} f(u) (e^{\iota u\cdot (x_1-x_2)}-e^{-\iota u\cdot x_2}) du\Big|\\
&=\Big| \int_{\R^d} f(u) (e^{\iota u\cdot (x_1-x_2)}-e^{-\iota u\cdot x_2}-\iota u\cdot x_1) du\Big|\\
&\leq \Big| \int_{\R^d} f(u) e^{-\iota u\cdot x_2}(e^{\iota u \cdot x_1}-1-\iota u\cdot x_1)du\Big|+\Big| \int_{\R^d}  \iota u\cdot x_1 f(u)(e^{-\iota u\cdot x_2}-1) du\Big|\\
&\leq c_1\big(|x_1|^{\alpha}+|x_1||x_2|^{\alpha-1}\big).
\end{align*}
If $|x_2|\leq |x_1|$, then $|\widehat{f}(x_1-x_2)-\widehat{f}(-x_2)|\leq 2c_1|x_1|^{\alpha}$. If $|x_2|>|x_1|$, then $|x_1||x_2|^{\alpha-1}\leq |x_1|^{\theta}|x_2|^{\alpha-\theta}$ for any $\theta\in[0,1]$. Note that $|\widehat{f}(x_1-x_2)-\widehat{f}(-x_2)|$ is uniformly bounded. Combining all these estimates gives the desired result.
\end{proof}

\begin{lemma} \label{int}
For any $T>0$, $n\in\mathbb{N}$, $1-Hd=2H\alpha$, $\alpha>0$ and $p=0,1,2$, there exists a positive constant $c_{T,d,H,\alpha}$ depending on $T$, $d$, $H$ and $\alpha$ such that
\begin{align*}
\int^{nT}_0\int_{\mathbb{R}^d} (|x|^{\alpha}\wedge 1)^p e^{-|x|^2u^{2H}} dx\,du
&\leq c_{T,d,H,\alpha} \begin{cases}
(nT)^{1-Hd-Hp \alpha}  & \text{if}\;1-Hd>Hp\alpha; \\   
\ln n & \text{if}\; 1-Hd=Hp\alpha.
\end{cases}
\end{align*}
\end{lemma}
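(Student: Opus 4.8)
The plan is to reduce the two-dimensional integral to a one-dimensional one by integrating out the $x$-variable first. Using the identity $\int_{\mathbb{R}^d} e^{-|x|^2 u^{2H}}\,dx = (\pi/u^{2H})^{d/2}$, and more generally $\int_{\mathbb{R}^d} |x|^{\gamma} e^{-|x|^2 u^{2H}}\,dx = c_{d,\gamma}\, u^{-H(d+\gamma)}$ for $\gamma > -d$ (a Gamma-function computation in polar coordinates), one sees that the inner integral behaves like a power of $u$. The factor $(|x|^\alpha \wedge 1)^p$ forces a split of the $x$-integral at $|x| = 1$: on $\{|x| \le 1\}$ we bound $(|x|^\alpha\wedge 1)^p \le |x|^{p\alpha}$, and on $\{|x| > 1\}$ we bound it by $1$. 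Thus for $p = 0, 1, 2$,
\[
\int_{\mathbb{R}^d} (|x|^\alpha \wedge 1)^p e^{-|x|^2 u^{2H}}\,dx \le \int_{|x|\le 1} |x|^{p\alpha} e^{-|x|^2 u^{2H}}\,dx + \int_{|x|>1} e^{-|x|^2 u^{2H}}\,dx.
\]

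Next I would treat the two ranges of $u$ separately. For small $u$ (say $u \le 1$) the inner integral is bounded by a constant (the full Gaussian integral $\int_{\mathbb{R}^d} e^{-|x|^2 u^{2H}}\,dx$ is largest near $u=0$, but near $u=0$ one instead uses the crude bound $\int_{|x|\le 1} 1\,dx + \int_{|x|>1} e^{-|x|^2}\,dx < \infty$ since $u^{2H}\le 1$ makes the tail no worse than at $u=1$) — wait, more carefully: for $u$ bounded away from $\infty$ the inner integral is uniformly bounded, so $\int_0^1 (\cdots)\,du \le c$. For large $u$, I substitute $x = y/u^H$ in each piece: the first piece becomes $u^{-H(d+p\alpha)}\int_{|y|\le u^H} |y|^{p\alpha} e^{-|y|^2}\,dy \le c\, u^{-H(d+p\alpha)}$, and the second becomes $u^{-Hd}\int_{|y|>u^H} e^{-|y|^2}\,dy$, which decays faster than any power of $u$. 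Hence the inner integral is $\le c\,(1 \wedge u^{-H(d+p\alpha)})$ up to negligible tail terms, uniformly in $u \ge 1$.

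Finally I would integrate in $u$ over $[1, nT]$. Writing the exponent as $H(d + p\alpha) = Hd + Hp\alpha = (1 - 2H\alpha) + Hp\alpha$ using the hypothesis $1 - Hd = 2H\alpha$, the relevant power is $u^{-(1 - 2H\alpha + Hp\alpha)}$; since $\int_1^{nT} u^{-q}\,du$ equals a constant times $(nT)^{1-q}$ when $q < 1$ and equals $\ln(nT) \le c \ln n$ (for $n$ large, absorbing $T$) when $q = 1$, and $1 - q = Hd + Hp\alpha$ reversed... let me restate: $1 - q = 1 - H(d+p\alpha) = 2H\alpha - Hp\alpha$; combined with the $(nT)^{0}$ contribution this gives exactly $(nT)^{1 - Hd - Hp\alpha}$ when $1 - Hd > Hp\alpha$ (equivalently $q<1$) and $\ln n$ when $1 - Hd = Hp\alpha$ (equivalently $q=1$); in the borderline case one must also check that the finite contribution from $u\in[0,1]$ is lower order, which it is. The rapidly-decaying tail terms $\int_1^\infty u^{-Hd}\int_{|y|>u^H} e^{-|y|^2}\,dy\,du$ are finite and contribute only to the constant. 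The only mildly delicate point — the part I expect to require the most care — is bookkeeping the borderline exponent $q = 1$ and verifying that the $u \le 1$ piece and the Gaussian-tail pieces are genuinely subdominant there, so that the $\ln n$ bound (and not something larger) is correct; everything else is a routine scaling computation.
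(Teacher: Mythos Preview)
Your approach is essentially the same as the paper's: split the $x$-integral at $|x|=1$, split the $u$-integral at a fixed point (you use $1$, the paper uses $T$), scale by $x = y/u^{H}$ on the large-$u$ range, and then integrate the resulting power of $u$.

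There is one genuine slip. You claim that for $u\in[0,1]$ the inner integral is uniformly bounded, justifying this by ``$u^{2H}\le 1$ makes the tail no worse than at $u=1$.'' The inequality goes the other way: for $u<1$ one has $e^{-|x|^{2}u^{2H}}>e^{-|x|^{2}}$, so the piece $\int_{|x|>1}e^{-|x|^{2}u^{2H}}\,dx$ is \emph{larger} than at $u=1$ and in fact blows up like $u^{-Hd}$ as $u\to 0$. The fix is immediate: bound this piece by the full Gaussian, $\int_{|x|>1}e^{-|x|^{2}u^{2H}}\,dx\le \int_{\R^{d}}e^{-|x|^{2}u^{2H}}\,dx=c\,u^{-Hd}$, and use the hypothesis $1-Hd=2H\alpha>0$ to see that $\int_{0}^{1}u^{-Hd}\,du<\infty$. (The $|x|\le 1$ piece really is uniformly bounded on $[0,1]$, as you say.) This is exactly what the paper does implicitly when it absorbs the entire $\int_{0}^{nT}\int_{|x|>1}$ term into a constant. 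With this correction your argument goes through and matches the paper's.
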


\begin{proof}
Note that 
\begin{align*}
\int^{nT}_0\int_{\mathbb{R}^d} (|x|^{\alpha}\wedge 1)^p e^{-|x|^2u^{2H}} dx\,du
&=\int^{nT}_0\int_{|x|\leq 1} |x|^{p\alpha} e^{-|x|^2u^{2H}} dx\,du+\int^{nT}_0\int_{|x|>1} e^{-|x|^2u^{2H}} dx\,du\\
&\leq c_1\Big(T+\int^{nT}_T \int_{|x|\leq 1} |x|^{p\alpha} e^{-|x|^2u^{2H}} dx\,du+1\Big)\\
&\leq c_1\Big(T+1+\int^{nT}_T u^{-Hd-Hp\alpha}\,du\Big).
\end{align*}
The desired result follows from some simple calculations.
\end{proof}

\section{An enhanced chaining argument}

In this section, we will give the enhanced chaining argument for moments of the increment of the stochastic process $F_n=\{F_n(t):\; t\geq 0\}$ where
\[
F_n(t)=\ell_{n,H,d}\, \Big(\int^t_0 f(n^H(X_s-\lambda))ds-n^{-Hd}L_t(\lambda)\int_{\mathbb{R}} f(x)dx\Big)
\]
with
\begin{align} \label{switch}
\ell_{n,H, d}=
 \begin{cases}
 n^{\frac{Hd+1}{2}} & \text{if} \quad H>\frac{1}{2\beta+d} \\ \\
 (\ln n)^{-\frac{1}{2}} n^{\frac{Hd+1}{2}}   & \text{if} \quad H=\frac{1}{2\beta+d}.
 \end{cases}
\end{align}

For any $0\leq a<b<\infty$ and $m\in\N$, let  
\[
I^n_m=\E\left[ (F_n(b) -F_n(a))^m\right].
\] 
Clearly,
\begin{align*}
I_m^n&= c_{m,d}\, \ell^m_{n,H,d} \,  n^{-m}  \int_{ \R ^{md}}  \int_{D_m} \prod_{i=1}^m(\widehat{f}(y_i)-\widehat{f}(0))\\
&\qquad\qquad\qquad\qquad \times \exp\Big(-\iota n^H \lambda \cdot \sum^m_{i=1}y_i-\frac{1}{2}\Var\big(\sum\limits^m_{i=1} y_i\cdot X_{s_i}\big)\Big)\, ds\, dy,
 \end{align*}
where $c_{m,d}=\frac{m!}{(2\pi)^{md}}$ and $D_m=\big\{(s_1, \dots, s_m): na<s_1<\cdots<s_m<nb\big\}$. 

Making the change of variables
$x_i =\sum\limits_{j=i}^m y_j$ (with the convention that $x_{m+1} =0$), we can write
\begin{align*}
I^n_m
&=c_{m,d}\, \ell^m_{n,H,d} \, n^{-m}\int_{\R^{md}}\int_{D_m} \Big(\prod^m_{i=1} (\widehat{f}(x_i-x_{i+1})-\widehat{f}(0))\Big)\\
&\qquad\qquad\qquad \times \exp\Big(-\iota n^H\lambda \cdot x_1-\frac{1}{2}\Var\big(\sum\limits^m_{i=1} x_i\cdot (X_{s_i}-X_{s_{i-1}})\big)\Big)\, ds\, dx.
\end{align*}
The main idea in order to estimate these terms is to replace each product  
\[
(\widehat{f}(x_{2i-1}-x_{2i})-\widehat{f}(0))(\widehat{f}(x_{2i}-x_{2i+1})-\widehat{f}(0))
\]    
by $(\widehat{f}( -x_{2i})-\widehat{f}(0))(\widehat{f}(x_{2i})-\widehat{f}(0))= |\widehat{f}(  x_{2i})-\widehat{f}(0)|^2 $. Then, the differences  $\widehat{f}(x_{2i-1}-x_{2i})- \widehat{f}( -x_{2i})$ and
$\widehat{f}(x_{2i}-x_{2i+1})- \widehat{f}(  x_{2i})$ are bounded by the corresponding estimates in Lemma \ref{f}.  We are going to make these substitutions recursively. To do this, we introduce the following notation.

Let $I^n_{m,0}=I^n_m$. For $k=1,\dots,m$, we define
\begin{align*}
I^n_{m,k}
&=c_{m,d}\,  \ell^m_{n,H,d} \,  n^{-m}   \, \int_{\R^{md}}\int_{D_m} I_k\, \prod^m_{i=k+1} (\widehat{f}(x_i-x_{i+1})-\widehat{f}(0))\\
&\qquad\qquad\times \exp\Big(-\iota n^H\lambda \cdot x_1-\frac{1}{2}\Var\big(\sum\limits^m_{i=1} x_i\cdot (X_{s_i}-X_{s_{i-1}})\big)\Big)\, ds\, dx,
\end{align*}
where 
\[
I_k =
\begin{cases}
\prod\limits^{\frac{k-1}{2}}_{j=1}|\widehat{f}(x_{2j})-\widehat{f}(0)|^2 (\widehat{f}(-x_{k+1})-\widehat{f}(0)), & \text{if } k \text{ is odd}; \\ 
\prod\limits^{\frac{k}{2}}_{j=1}|\widehat{f}(x_{2j})-\widehat{f}(0)|^2, & \text{if }k\text{ is even}.
\end{cases}
\]
The following Propositions \ref{chain1} and \ref{chain2} control the difference between $I^n_{m,k-1}$ and $ I^n_{m,k}$. One is for the case $\frac{1}{2\beta+d}<H<\frac{1}{d}$ with $\beta\in (0,2]$. The other is for the case $H=\frac{1}{2\beta+d}$ with $\beta\in \{1,2\}$.  These two propositions give the enhanced chaining argument for the terms $I^n_{m,k}$ with $k=0,1,\cdots,m$. The original chaining argument for this kind of terms was developed in \cite{nx} and corresponds to the case $\beta=1$ in Proposition \ref{chain1} below. 

For $H\in (\frac{1}{2\beta+d}, \frac{1}{d})$, let
\begin{equation} \label{gamma0}
\gamma_0=\min\Big(\frac{1-Hd }{2}, \frac{2H\beta-1+Hd}{2} \Big).
\end{equation}
Obviously, $\gamma_0=\frac{1-Hd }{2}$ if $1-Hd\leq H\beta$ and  $\gamma_0=\frac{2H\beta-1+Hd}{2}$ if $1-Hd>H\beta$. 

Fix a positive constant $\gamma$ such that $\gamma\in (0,\gamma_0)$.

\begin{proposition} \label{chain1} Suppose that $\frac{1}{2\beta+d}<H<\frac{1}{d}$ and $f\in \Theta^{\beta}$ for some $\beta\in(0,2]$. For $k=1,2,\dots,m$, there exists a positive constant $c$, which depends on $\gamma$, such that
\[
|I^n_{m,k-1}-I^n_{m,k}|\leq c\, n^{-\gamma} (b-a)^{\frac{m(1-Hd)}{2}-\gamma}.
\]
\end{proposition}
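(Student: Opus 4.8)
The strategy is to pass from $I^n_{m,k-1}$ to $I^n_{m,k}$ by replacing, inside the integral, the single factor
\[
(\widehat f(x_k-x_{k+1})-\widehat f(0))
\]
(when $k$ is even: paired with the previous factor to form $|\widehat f(x_k)-\widehat f(0)|^2$; when $k$ is odd: producing $\widehat f(-x_{k+1})-\widehat f(0)$) by its ``frozen'' counterpart, so that the difference $I^n_{m,k-1}-I^n_{m,k}$ is an integral of exactly the same type but with one factor of the form
\[
\widehat f(x_{k}-x_{k+1})-\widehat f(x_{k}) \quad\text{or}\quad \widehat f(x_{k-1}-x_{k})-\widehat f(-x_{k}),
\]
times a product of $O(1)$-bounded factors $|\widehat f(\cdot)-\widehat f(0)|$, times the Gaussian weight $\exp(-\tfrac12\Var(\sum x_i\cdot(X_{s_i}-X_{s_{i-1}})))$. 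I would first write this difference explicitly, bound the phase $e^{-\iota n^H\lambda\cdot x_1}$ by $1$, and bound all the $\widehat f$-factors other than the distinguished one by the constant from Lemma \ref{f} (using only $|\widehat f(x)-\widehat f(0)|\le c(|x|^\beta\wedge 1)\le c$).

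**The key estimates.**
For the distinguished factor I would invoke Lemma \ref{f}: in the regime $\beta\in(0,1]$ it is bounded by $c(|x_k|^\alpha\wedge 1)$ (or $c(|x_{k+1}|^\alpha\wedge 1)$), picking $\alpha$ slightly larger than $2\gamma/H$ but $\le\beta$; in the regime $\beta\in(1,2]$ it is bounded by $c((|x_k|^\alpha+|x_k|^\theta|x_{k+1}|^{\alpha-\theta})\wedge 1)$, and one splits into the two summands. Then I would apply the local nondeterminism Lemma \ref{lema1}: $\Var(\sum_i x_i\cdot(X_{s_i}-X_{s_{i-1}}))\ge \kappa_{H,m}\sum_i|x_i|^2(s_i-s_{i-1})^{2H}$, which makes the Gaussian weight factor through the $x_i$'s; after integrating out all $x_j$ with $j\ne k, k+1$ (Gaussian integrals producing $\prod (s_j-s_{j-1})^{-Hd/2}$) and then the $s_j$'s, one is left — via Lemma \ref{int} applied to the $(s_k,x_k)$ and $(s_{k+1},x_{k+1})$ pairs — with an extra gain of a power of the time-length from the factor $(|x|^\alpha\wedge 1)$: precisely $(s_k-s_{k-1})^{-H\alpha}$ or $(s_{k+1}-s_k)^{-H\alpha}$, which under $\gamma<\gamma_0\le(1-Hd)/2$ is integrable and yields, after rescaling time by $n$ and using $na\le s_i\le nb$, a total bound of order $n^{-H\alpha}(b-a)^{\frac{m(1-Hd)}{2}-H\alpha}$; choosing $\alpha$ with $H\alpha>\gamma$ (possible since $\gamma<\gamma_0\le H\beta\cdot\frac{\cdots}{\cdots}$, i.e. $2\gamma_0<2H\beta$ so $\gamma_0/H$ does not exceed $\beta$ in the relevant range) gives the claimed $n^{-\gamma}(b-a)^{\frac{m(1-Hd)}{2}-\gamma}$. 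The role of the upper bound $\gamma_0$ for $\gamma$ is exactly to ensure both that the extra time-exponent $\frac{m(1-Hd)}{2}-\gamma$ stays positive (so Lemma \ref{int}'s first case applies rather than the logarithmic one) and that the exponent $\alpha$ needed to produce $n^{-\gamma}$ can be taken $\le\beta$, matching the hypothesis of Lemma \ref{f}.

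**The main obstacle.**
The delicate bookkeeping — and what I expect to be the crux — is organizing the multiple Gaussian integrations against the lower bound from Lemma \ref{lema1} so that the ``free'' variables $x_j$ each contribute $(s_j-s_{j-1})^{-Hd/2}$ uniformly, while the distinguished variable(s) $x_k$ (and $x_{k+1}$ in the odd case, or when $\beta>1$ and the cross term $|x_k|^\theta|x_{k+1}|^{\alpha-\theta}$ appears) contribute the improved $(s_\bullet-s_{\bullet-1})^{-H(d+\alpha)/2}$-type factors, and then checking that the resulting iterated integral over the simplex $D_m$ rescaled to $[a,b]^m$ converges with the exponent asserted — this is where one must use $\gamma<\gamma_0$ in an essential way, and in the $\beta\in(1,2]$ case one must additionally verify that the cross term $|x_k|^\theta|x_{k+1}|^{\alpha-\theta}$, after splitting the power of $H$ between the two time gaps, still produces a net $n^{-\gamma}$ with both resulting time-exponents summing correctly. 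Handling the two parities of $k$ and the two ranges of $\beta$ uniformly, while keeping the constant independent of $n$, is the bulk of the work; the individual integral estimates are routine given Lemmas \ref{lema1}, \ref{f} and \ref{int}.
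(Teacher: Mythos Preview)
Your overall architecture is right --- write the difference as an integral with a single distinguished factor $\widehat f(x_k-x_{k+1})-\widehat f(-x_{k+1})$ (odd $k$) or $\widehat f(x_k-x_{k+1})-\widehat f(x_k)$ (even $k$), drop the phase, apply Lemma~\ref{lema1} --- but the step ``bound all the $\widehat f$-factors other than the distinguished one by the constant from Lemma~\ref{f}'' is fatal, and your exponent $(s_j-s_{j-1})^{-Hd/2}$ for the free Gaussian integrals is off: $\int_{\R^d}e^{-c|x|^2u^{2H}}dx\asymp u^{-Hd}$, not $u^{-Hd/2}$. If each unchained factor $|\widehat f(x_i-x_{i+1})-\widehat f(0)|$ for $i>k$ is replaced by a constant, then integrating $x_i$ gives $c\,u_i^{-Hd}$, hence $\int_0^{n(b-a)}u_i^{-Hd}\,du_i=c(n(b-a))^{1-Hd}$; combined with the prefactor $n^{m(Hd-1)/2}$, the already-paired block, and the single distinguished gain $(n(b-a))^{-H\alpha}$, the net power of $n$ comes out to $\tfrac{1-Hd}{2}(m-k+1)-H\alpha$. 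For small $k$ (e.g.\ $k=1$) this is \emph{positive} for any admissible $\alpha\le\beta$, so the bound blows up in $n$ rather than decaying. Your claimed outcome $n^{-H\alpha}(b-a)^{m(1-Hd)/2-H\alpha}$ simply does not follow from the scheme you describe.

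What the paper does --- and what is not optional --- is to bound \emph{every} unchained factor $|\widehat f(x_i-x_{i+1})-\widehat f(0)|$, $i=k+1,\dots,m$, by $c(|x_i|^{\alpha_i}+|x_{i+1}|^{\alpha_i})$ via Lemma~\ref{f} with $\alpha_i=\tfrac{1-Hd}{2H}-\varepsilon$, then expand the product over all binary choices $p_i\in\{0,1\}$ and integrate term by term. Each such variable then contributes roughly $(n(b-a))^{(1-Hd)/2+H\varepsilon}$ instead of $(n(b-a))^{1-Hd}$, and only after this does the $n$-exponent collapse to $-\gamma_0+O(\varepsilon)$. The distinguished factor carries a \emph{larger} exponent $\alpha_k\approx\tfrac{1-Hd}{H}\wedge\beta$, which is what actually produces the extra $\gamma_0$. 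The already-paired factors $|\widehat f(x_{2j})-\widehat f(0)|^2$ for $2j<k$ are likewise not bounded by constants but integrated directly, using $\int_0^\infty\int_{\R^d}|\widehat f(x)-\widehat f(0)|^2e^{-\kappa|x|^2u^{2H}}\,dx\,du<\infty$ (finite precisely because $H>\tfrac{1}{2\beta+d}$). The ``bookkeeping'' you flag as the main obstacle is in fact where the estimate lives; and Lemma~\ref{int} plays no role here --- it is used only in the critical case, Proposition~\ref{chain2}.
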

 
\begin{proof}  We divide the proof into two parts. 

\noindent
{\bf Part I}: We consider the case $\beta\in (0,1]$. Suppose that  $k$ is odd. Applying the local nondeterminism property  (\ref{lndp}) and making the change of variable $u_1=s_1$, $u_i =s_i -s_{i-1}$, for $2\le i\le m$, we can show that $|I^n_{m,k-1}-I^n_{m,k}|$ is less than a constant multiple of
\begin{align*}
&n^{m\frac{Hd-1}{2}}\int_{\R^{md}}\int_{O_m} \Big(\prod^m_{i=k+1}  |\widehat{f}(x_i-x_{i+1})-\widehat{f}(0)|\Big)\\
&\qquad\times \big|\widehat{f}(x_k-x_{k+1})-\widehat{f}(-x_{k+1})\big|\Big(\prod^{\frac{k-1}{2}}_{j=1}|\widehat{f}(x_{2j})-\widehat{f}(0)|^2\Big)\, \exp\Big(-\frac{\kappa_{H,m}}{2} \sum\limits^m_{i=1} |x_i|^2u^{2H}_i\Big)\, du\, dx,
\end{align*}
where $O_{m}=\big\{ (u_1, \dots, u_m):   0<u_i \le n(b-a),    i=1,\dots,m\big\}$.

Integrating with respect to $x_i$s and $u_i$s with $i\leq k-1$ and then using Lemma \ref{f} give
\begin{align*}
&|I^n_{m,k-1}-I^n_{m,k}|\\
&\leq c_1\, (b-a)^{\frac{k-1}{2}(1-Hd) }\, n^{(m-k+1)\frac{Hd-1}{2}}\int_{\R^{(m-k+1)d}}\int_{O_{m,k}} |x_{k}|^{\alpha_k}|x_m|^{\alpha_m} \prod^{m-1}_{i=k+1}  (|x_i|^{\alpha_i}+|x_{i+1}|^{\alpha_i})\\
&\qquad\qquad\times \exp\Big(-\frac{\kappa_{H,m}}{2} \sum\limits^m_{i=k} |x_i|^2u^{2H}_i\Big)\, d\overline{u}_k\, d\overline{x}_k\\
&\leq c_1\, (b-a)^{\frac{k-1}{2}(1-Hd) }\, n^{(m-k+1)\frac{Hd-1}{2}}\sum_{\mathcal{S}_k}\int_{\R^{(m-k+1)d}}\int_{O_{m,k}} |x_{k}|^{\alpha_k} |x_m|^{\alpha_m} \prod^{m-1}_{i=k+1}  (|x_i|^{p_i\alpha_i}|x_{i+1}|^{\overline{p}_i\alpha_i})\\
&\qquad\qquad\times \exp\Big(-\frac{\kappa_{H,m}}{2} \sum\limits^m_{i=k} |x_i|^2u^{2H}_i\Big)\, d\overline{u}_k\, d\overline{x}_k\\
&\leq c_2\, (b-a)^{\frac{k-1}{2}(1-Hd) }\, n^{(m-k+1)\frac{Hd-1}{2}}  \sum_{\mathcal{S}_k} \int_{O_{m,k}} u_k^{-Hd-H\alpha_k}u_{k+1}^{-Hd-Hp_{k+1}\alpha_{k+1}} u_m^{-Hd-H(\overline{p}_{m-1}\alpha_{m-1}+\alpha_m)} \\
&\qquad\qquad\qquad\times \prod^{m-1}_{i=k+2} u_i^{-Hd-H(\overline{p}_{i-1}\alpha_{i-1}+p_i\alpha_i)}\, d\overline{u}_k,
\end{align*}
where $\alpha_i$s are constants in $[0,\beta]$, $\mathcal{S}_k=\big\{p_i, \overline{p}_i:  p_i\in \{0,1\}, p_i+\overline{p}_i=1,\, i=k+1,\dots,m-1\big\}$, $d\overline{u}_k=du_k\cdots du_{m}$, $d\overline{x}_k=dx_k\cdots dx_{m}$ and $O_{m,k}=\{ (u_k, \dots, u_m): 0<u_i \le n(b-a),    i=k,\dots,m\}$.

For any $\varepsilon\in (0,\frac{1-Hd}{2H})$, let
\[
\alpha_k=\begin{cases}
\frac{1-Hd}{H}-\varepsilon   & \text{if } 1-Hd\leq H\beta; \\ \\
\beta-\varepsilon & \text{if } H\beta<1-Hd<2H\beta,
\end{cases}
\]
and $\alpha_i=\frac{1-Hd}{2H}-\varepsilon$ for $i=k+1,\cdots, m$. 

With these choices of $\alpha_i$s, integrating with respect to $u_i$s with $i\geq k$ gives 
\begin{align*}
&|I^n_{m,k-1}-I^n_{m,k}|\leq c_3\, (b-a)^{\frac{k-1}{2}(1-Hd) }\, n^{(m-k+1)\frac{Hd-1}{2}} (nb-na)^{(m-k+1)(1-Hd)-H\sum\limits^m_{i=k}\alpha_i}.
\end{align*}
Observe that
\begin{align*}
(m-k+1)\frac{Hd-1}{2}+(m-k+1)(1-Hd)-H\sum^m_{i=k}\alpha_i=-\gamma_0+ (m-k+1)H\varepsilon.
\end{align*}
Choosing $\varepsilon$ small enough gives 
\[
|I^n_{m,k-1}-I^n_{m,k}|\leq c_4\, n^{-\gamma} (b-a)^{\frac{m(1-Hd)}{2}-\gamma}.
\]

Suppose now that $k$ is even. Using similar notation and arguments as above, we can show that
\begin{align*}
&|I^n_{m,k-1}-I^n_{m,k}|\\
&\leq c_5 \, n^{m\frac{Hd-1}{2}}\int_{\R^{md}}\int_{O_m} \Big(\prod^{\frac{k-2}{2}}_{j=1}|\widehat{f}(x_{2j})-\widehat{f}(0)|^2\Big) \big|\widehat{f}(-x_k)-\widehat{f}(0)\big|\big|\widehat{f}(x_k-x_{k+1})-\widehat{f}(x_k)\big| \\
&\qquad\qquad\qquad\times \prod^m_{i=k+1}  |\widehat{f}(x_i-x_{i+1})-\widehat{f}(0)|\, \exp\Big(-\frac{\kappa_{H,m}}{2} \sum\limits^m_{i=1} |x_i|^2u^{2H}_i\Big)\, du\, dx\\
&\leq c_6\, (b-a)^{\frac{k}{2}(1-Hd) }\, n^{(m-k)\frac{Hd-1}{2}} \int_{\R^{(m-k+1)d}}\int_{O_{m,k}}  |x_k|^{\alpha_{k-1}}|x_{k+1}|^{\alpha_k} |x_m|^{\alpha_m}\\
&\qquad\qquad\qquad\times \Big(\prod^{m-1}_{i=k+1}  (|x_i|^{\alpha_i}+|x_{i+1}|^{\alpha_i})\Big) \exp\Big(-\frac{\kappa_{H,m}}{2} \sum\limits^m_{i=k} |x_i|^2u^{2H}_i\Big)\, d\overline{u}_k\, d\overline{x}_k\\
&= c_6\, (b-a)^{\frac{k}{2}(1-Hd) }\, n^{(m-k)\frac{Hd-1}{2}}  \sum_{\mathcal{S}_k} \int_{\R^{(m-k+1)d}}\int_{O_{m,k}}  |x_k|^{\alpha_{k-1}}|x_{k+1}|^{\alpha_k} |x_m|^{\alpha_m}\\
&\qquad\qquad\qquad \times  \Big(\prod^{m-1}_{i=k+1}  |x_i|^{p_i\alpha_i}|x_{i+1}|^{\overline{p}_i\alpha_i}\Big) \exp\Big(-\frac{\kappa_{H,m}}{2} \sum\limits^m_{i=k} |x_i|^2u^{2H}_i\Big)\, d\overline{u}_k\, d\overline{x}_k\\
&\leq c_7\, (b-a)^{\frac{k}{2}(1-Hd) }\, n^{(m-k)\frac{Hd-1}{2}}  \sum_{\mathcal{S}_k} \int_{O_{m,k}} u_k^{-Hd-H\alpha_{k-1}}u_{k+1}^{-Hd-H(a_{k}+p_{k+1}\alpha_{k+1})}  \\
&\qquad\qquad\qquad \times u_m^{-Hd-H(\overline{p}_{m-1}\alpha_{m-1}+\alpha_m)} \prod^{m-1}_{i=k+2} u_i^{-Hd-H(\overline{p}_{i-1}\alpha_{i-1}+p_i\alpha_i)}\, d\overline{u}_k,
\end{align*}
where $\alpha_i$s are constants in $[0,\beta]$, $\mathcal{S}_k=\big\{p_i, \overline{p}_i:  p_i\in \{0,1\}, p_i+\overline{p}_i=1,\, i=k+1,\dots,m-1\big\}$, $d\overline{u}_k=du_k\cdots du_{m}$, $d\overline{x}_k=dx_k\cdots dx_{m}$ and $O_{m,k}=\{ (u_k, \dots, u_m): 0<u_i \le n(b-a),    i=k,\dots,m\}$.

For any $\varepsilon\in (0,\frac{1-Hd}{2H})$, let
\[
\alpha_{k-1}=\begin{cases}
\frac{1-Hd}{H}-\varepsilon   & \text{if } 1-Hd\leq H\beta; \\ \\
\beta-\varepsilon & \text{if } H\beta<1-Hd<2H\beta,
\end{cases}
\]
and $\alpha_i=\frac{1-Hd}{2H}-\varepsilon$ for $i=k,\cdots,m$. Then 
\begin{align*}
|I^n_{m,k-1}-I^n_{m,k}|
&\leq c_8\, (b-a)^{\frac{k}{2}(1-Hd) }\, n^{(m-k)\frac{Hd-1}{2}} (nb-na)^{\frac{(m-k+1)(1-Hd)}{2}-H\alpha_{k-1}+(m-k+1)H\varepsilon}\\
&\leq c_8\,  (b-a)^{\frac{m(1-Hd)}{2}-\gamma_0+(m-k+2)H\varepsilon}n^{-\gamma_0+(m-k+2)H\varepsilon}.
\end{align*}
Choosing $\varepsilon$ small enough gives
\begin{align*}
|I^n_{m,k-1}-I^n_{m,k}|
&\leq c_9\,  (b-a)^{\frac{m(1-Hd)}{2}-\gamma}n^{-\gamma}.
\end{align*}

\noindent
{\bf Part II}: We consider the case $\beta\in (1,2]$. Suppose that $k$ is odd. Using the similar notation and arguments as in {\bf Part I}, we can show that  
\begin{align*}
|I^n_{m,k-1}-I^n_{m,k}|
&\leq c_{10}\, n^{m\frac{Hd-1}{2}}\int_{\R^{md}}\int_{O_m} \Big(\prod^m_{i=k+1}  |\widehat{f}(x_i-x_{i+1})-\widehat{f}(0)|\Big) \big|\widehat{f}(x_k-x_{k+1})-\widehat{f}(-x_{k+1})\big| \\
&\qquad\qquad\times \Big(\prod^{\frac{k-1}{2}}_{j=1}|\widehat{f}(x_{2j})-\widehat{f}(0)|^2\Big)\, \exp\Big(-\frac{\kappa_{H,m}}{2} \sum\limits^m_{i=1} |x_i|^2u^{2H}_i\Big)\, du\, dx\\
&\leq c_{11}\, n^{m\frac{Hd-1}{2}}\int_{\R^{md}}\int_{O_m}  (|x_{k}|^{\alpha_k}+|x_{k}|^{\theta} |x_{k+1}|^{\alpha_k-\theta}) |x_m|^{\alpha_m}\prod^{m-1}_{i=k+1}  (|x_i|^{\alpha_i}+|x_{i+1}|^{\alpha_i})  \\
&\qquad\qquad\times \Big(\prod^{\frac{k-1}{2}}_{j=1}|\widehat{f}(x_{2j})-\widehat{f}(0)|^2\Big)\exp\Big(-\frac{\kappa_{H,m}}{2} \sum\limits^m_{i=1} |x_i|^2u^{2H}_i\Big)\, du\, dx,
\end{align*}
where we use Lemma \ref{f} in the last inequality with $\alpha_i$s being constants in $[\theta,\beta]$ and $\theta\in(0,1]$. 

Using similar arguments as in {\bf Part I}, we can obtain that 
\begin{align*}
&n^{m\frac{Hd-1}{2}}\int_{\R^{md}}\int_{O_m}  |x_{k}|^{\alpha_k}  |x_m|^{\alpha_m} \prod^{m-1}_{i=k+1}  (|x_i|^{\alpha_i}+|x_{i+1}|^{\alpha_i})  \\
&\qquad\qquad\qquad\times \Big(\prod^{\frac{k-1}{2}}_{j=1}|\widehat{f}(x_{2j})-\widehat{f}(0)|^2\Big)\exp\Big(-\frac{\kappa_{H,m}}{2} \sum\limits^m_{i=1} |x_i|^2u^{2H}_i\Big)\, du\, dx
\end{align*}
is less than a constant multiple of $(b-a)^{\frac{m(1-Hd)}{2}}n^{-\gamma}$. 

So it suffices to estimate
\begin{align*}
&\text{II}:=n^{m\frac{Hd-1}{2}}\int_{\R^{md}}\int_{O_m}  |x_{k}|^{\theta} |x_{k+1}|^{\alpha_k-\theta} |x_m|^{\alpha_m} \prod^{m-1}_{i=k+1}  (|x_i|^{\alpha_i}+|x_{i+1}|^{\alpha_i})  \\
&\qquad\qquad\qquad\times \Big(\prod^{\frac{k-1}{2}}_{j=1}|\widehat{f}(x_{2j})-\widehat{f}(0)|^2\Big)\exp\Big(-\frac{\kappa_{H,m}}{2} \sum\limits^m_{i=1} |x_i|^2u^{2H}_i\Big)\, du\, dx.
\end{align*}
Integrating with respect to $x_i$s and $u_i$s with $i\leq k-1$ gives
\begin{align*}
\text{II} &\leq c_{12}\, (b-a)^{\frac{k-1}{2}(1-Hd) }\, n^{(m-k+1)\frac{Hd-1}{2}}\int_{\R^{(m-k+1)d}}\int_{O_{m,k}} |x_{k}|^{\theta} |x_{k+1}|^{\alpha_k-\theta} |x_m|^{\alpha_m}\\
&\qquad\qquad\qquad \times \prod^{m-1}_{i=k+1}  (|x_i|^{\alpha_i}+|x_{i+1}|^{\alpha_i}) \exp\Big(-\frac{\kappa_{H,m}}{2} \sum\limits^m_{i=k} |x_i|^2u^{2H}_i\Big)\, d\overline{u}_k\, d\overline{x}_k\\
&=c_{12}\, (b-a)^{\frac{k-1}{2}(1-Hd) }\, n^{(m-k+1)\frac{Hd-1}{2}}\sum_{\mathcal{S}_k}\int_{\R^{(m-k+1)d}}\int_{O_{m,k}}  |x_{k}|^{\theta} |x_{k+1}|^{\alpha_k-\theta} |x_m|^{\alpha_m} \\
&\qquad\qquad\qquad\times \prod^{m-1}_{i=k+1}  (|x_i|^{p_i\alpha_i}|x_{i+1}|^{\overline{p}_i\alpha_i}) \exp\Big(-\frac{\kappa_{H,m}}{2} \sum\limits^m_{i=k} |x_i|^2u^{2H}_i\Big)\, d\overline{u}_k\, d\overline{x}_k.
\end{align*}

For any $\varepsilon\in (0,\frac{1-Hd}{2H})$, let
\[
\alpha_{k}=\begin{cases}
\frac{1-Hd}{H}-\varepsilon   & \text{if } 1-Hd\leq H\beta; \\ \\
\beta-\varepsilon & \text{if } H\beta<1-Hd<2H\beta,
\end{cases}
\] 
$\theta=\frac{\gamma_0}{H}$ and $\alpha_i=\frac{1-Hd}{2H}-\varepsilon$ for $i=k+1,\cdots,m$. According to the definition of $\gamma_0$ in (\ref{gamma0}), we see that $\theta\in (0,1]$ and $\alpha_k\in(\theta, \beta]$.  

With these choices of $\theta$ and $\alpha_i$s, integrating with respect to $x_i$s and $u_i$s with $i\geq k$ gives 
\begin{align*}
&|I^n_{m,k-1}-I^n_{m,k}|\leq c_{13}\, (b-a)^{\frac{k-1}{2}(1-Hd) }\, n^{(m-k+1)\frac{Hd-1}{2}} (nb-na)^{(m-k+1)(1-Hd)-H\sum\limits^m_{i=k}\alpha_i}.
\end{align*}

Note that
\begin{align*}
(m-k+1)\frac{Hd-1}{2}+(m-k+1)(1-Hd)-H\sum^m_{i=k}\alpha_i=-\gamma_0+ (m-k+1)H\varepsilon.
\end{align*}
Therefore, choosing $\varepsilon$ small enough, we can obtain that  $\text{II}$ is less than a constant multiple of $ (b-a)^{\frac{m(1-Hd)}{2}-\gamma} n^{-\gamma} $ and thus
\[
|I^n_{m,k-1}-I^n_{m,k}|\leq c_{14}\, (b-a)^{\frac{m(1-Hd)}{2}} n^{-\gamma}.
\]

Suppose now that $k$ is even. Using the similar notation and arguments as above, we can show that $|I^n_{m,k-1}-I^n_{m,k}|$ is less than a constant multiple of
\begin{align*}
&|I^n_{m,k-1}-I^n_{m,k}|\\
&\leq c_{15}\, n^{m\frac{Hd-1}{2}}\int_{\R^{md}}\int_{O_m} \Big(\prod^{\frac{k-2}{2}}_{j=1}|\widehat{f}(x_{2j})-\widehat{f}(0)|^2\Big) \big|\widehat{f}(-x_k)-\widehat{f}(0)\big|\big|\widehat{f}(x_k-x_{k+1})-\widehat{f}(x_k)\big| \\
&\qquad\qquad\qquad\qquad\times \prod^m_{i=k+1}  |\widehat{f}(x_i-x_{i+1})-\widehat{f}(0)|\, \exp\Big(-\frac{\kappa_{H,m}}{2} \sum\limits^m_{i=1} |x_i|^2u^{2H}_i\Big)\, du\, dx\\
&\leq c_{16}\, (b-a)^{\frac{k}{2}(1-Hd) }\, n^{(m-k)\frac{Hd-1}{2}} \int_{\R^{(m-k+1)d}}\int_{O_{m,k}}  |x_k|^{\alpha_{k-1}}(|x_{k+1}|^{\alpha_k}+|x_{k+1}|^{\theta}|x_k|^{\alpha_k-\theta})|x_m|^{\alpha_m}\\
&\qquad\qquad\qquad\times \Big(\prod^{m-1}_{i=k+1}  (|x_i|^{\alpha_i}+|x_{i+1}|^{\alpha_i})\Big)\exp\Big(-\frac{\kappa_{H,m}}{2} \sum\limits^m_{i=k} |x_i|^2u^{2H}_i\Big)\, d\overline{u}_k\, d\overline{x}_k.
\end{align*}

Using similar arguments as in {\bf Part I}, we can obtain that 
\begin{align*}
&(b-a)^{\frac{k}{2}(1-Hd) }\, n^{(m-k)\frac{Hd-1}{2}} \int_{\R^{(m-k+1)d}}\int_{O_{m,k}}  |x_k|^{\alpha_{k-1}}|x_{k+1}|^{\alpha_k}|x_m|^{\alpha_m}\\
&\qquad\qquad\qquad\times \Big(\prod^{m-1}_{i=k+1}  (|x_i|^{\alpha_i}+|x_{i+1}|^{\alpha_i})\Big)\exp\Big(-\frac{\kappa_{H,m}}{2} \sum\limits^m_{i=k} |x_i|^2u^{2H}_i\Big)\, d\overline{u}_k\, d\overline{x}_k.
\end{align*}
is less than a constant multiple of $(b-a)^{\frac{m(1-Hd)}{2}-\gamma}n^{-\gamma}$. 

So we only need to estimate
\begin{align*}
&\text{III}:= (b-a)^{\frac{k}{2}(1-Hd) }\, n^{(m-k)\frac{Hd-1}{2}} \int_{\R^{(m-k+1)d}}\int_{O_{m,k}}  |x_k|^{\alpha_{k-1}+\alpha_k-\theta}|x_{k+1}|^{\theta} |x_m|^{\alpha_m}\\
&\qquad\qquad\qquad\times \Big(\prod^{m-1}_{i=k+1}  (|x_i|^{\alpha_i}+|x_{i+1}|^{\alpha_i})\Big)\exp\Big(-\frac{\kappa_{H,m}}{2} \sum\limits^m_{i=k} |x_i|^2u^{2H}_i\Big)\, d\overline{u}_k\, d\overline{x}_k.
\end{align*}
For any $\varepsilon\in (0,\frac{1-Hd}{2H})$, let
\[
\alpha_{k-1}=\begin{cases}
\frac{1-Hd}{H}-\varepsilon   & \text{if } 1-Hd\leq H\beta; \\ \\
\beta-\varepsilon & \text{if } H\beta<1-Hd<2H\beta,
\end{cases}
\]
$\theta=\frac{\gamma_0}{H}$ and $\alpha_i=\frac{1-Hd}{2H}-\varepsilon$ for $i=k,\cdots, m$. It is easy to see that $\alpha_{k-1}+\alpha_k-\theta\in (0,\frac{1-Hd}{H})$ and $\alpha_{k+1}+\theta\in (0,\frac{1-Hd}{H})$. 

With these choices of $\theta$ and $\alpha_i$s, we can easily get
\begin{align*}
&|I^n_{m,k-1}-I^n_{m,k}|\\
&\leq c_{17}\, (b-a)^{\frac{k}{2}(1-Hd) }\, n^{(m-k)\frac{Hd-1}{2}} (nb-na)^{(m-k+1)(1-Hd)-H\alpha_{k-1}-\frac{(m-k+1)(1-Hd)}{2}+(m-k+1)H\varepsilon}\\
&\leq c_{17}\,  (b-a)^{\frac{m(1-Hd)}{2}-\gamma_0+(m-k+2)H\varepsilon}n^{-\gamma_0+(m-k+2)H\varepsilon}.
\end{align*}
Choosing $\varepsilon$ small enough gives
\begin{align*}
|I^n_{m,k-1}-I^n_{m,k}|
&\leq c_{18}\,  (b-a)^{\frac{m(1-Hd)}{2}-\gamma}n^{-\gamma}.
\end{align*}
This completes the proof.
\end{proof}

\begin{proposition} \label{chain2} Suppose that $\frac{1}{2\beta+d}=H$ and $f\in \Theta^{\beta}$ for some $\beta\in \{1, 2\}$. For $k=1,2,\dots,m$, there exists a positive constant $c$ such that
\[
|I^n_{m,k-1}-I^n_{m,k}|\leq c\, (b-a)^{\frac{m(1-Hd)}{2}} (\ln n)^{-\frac{1}{2}}.
\]
\end{proposition}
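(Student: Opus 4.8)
The plan is to run the argument of Proposition \ref{chain1} with the ``critical'' choice of exponents and with the logarithmic case of Lemma \ref{int} in place of its power estimates. First I would apply the local nondeterminism property $(\ref{lndp})$ and the substitution $u_1=s_1$, $u_i=s_i-s_{i-1}$ for $2\le i\le m$, exactly as in Proposition \ref{chain1}, to bound $|I^n_{m,k-1}-I^n_{m,k}|$ by a constant multiple of $\ell^m_{n,H,d}\,n^{-m}$ times the integral over $\R^{md}\times O_m$ of $\exp\Big(-\tfrac{\kappa_{H,m}}2\sum_{i=1}^m|x_i|^2u_i^{2H}\Big)$ against the paired factors $\prod_{j}|\widehat f(x_{2j})-\widehat f(0)|^2$ over the even indices below $k$, the remaining differences $\prod_{i>k}|\widehat f(x_i-x_{i+1})-\widehat f(0)|$, and the ``gain'' factor $|\widehat f(x_k-x_{k+1})-\widehat f(-x_{k+1})|$ when $k$ is odd, resp. $|\widehat f(x_k-x_{k+1})-\widehat f(x_k)|\,|\widehat f(-x_k)-\widehat f(0)|$ when $k$ is even. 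Each difference is estimated by Lemma \ref{f} with the maximal exponent $\alpha=\beta$ — and, for $\beta=2$, with the parameter $\theta=1$ in the cross-term bound, since $\gamma_0=0$ in the critical regime forbids the choice $\theta=\gamma_0/H$ used in Proposition \ref{chain1} — keeping all the cut-offs $(\,\cdot\,)\wedge1$, and each mixed bound $(|x_i|+|x_{i+1}|)^{\beta}\wedge1$ is split as $(|x_i|^{\beta}\wedge1)+(|x_{i+1}|^{\beta}\wedge1)$. Because the Gaussian weight and the domain $O_m$ factor over $i$, this reduces $|I^n_{m,k-1}-I^n_{m,k}|$ to a finite sum, over finitely many ``distribution patterns'', of terms $c\,\ell^m_{n,H,d}\,n^{-m}\prod_{i=1}^m J_i$ with $J_i=\int_0^{n(b-a)}\!\int_{\R^d}\phi_i(|x_i|)\,e^{-\frac{\kappa_{H,m}}2|x_i|^2u_i^{2H}}\,dx_i\,du_i$, where $\phi_i$ is a product of finitely many elementary factors $(|x_i|^{\beta}\wedge1)$ (call these \emph{bricks}) together with, when $\beta=2$ and $i\in\{k,k+1\}$, at most one extra sub-critical power of $|x_i|$ coming from the cross term.

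Next I would evaluate each $J_i$ by Lemma \ref{int}, keeping the cut-offs (and treating the coupled pair $(x_k,x_{k+1})$ together when it arises from the $\beta=2$ cross term), so that $\prod_iJ_i$ is bounded by a constant multiple of $(n(b-a))^{\sum_i(1-Hd-Hw_i)}\,(\ln n)^{\#\{i:w_i=2\beta\}}$, where $w_i$ denotes the total $|x_i|$-exponent carried by slot $i$: a slot with $w_i<2\beta$ produces a clean power $(n(b-a))^{1-Hd-Hw_i}$, while a slot with $w_i=2\beta$ produces the borderline logarithm $c\ln n$, the latter happening exactly when $x_i$ carries two bricks (since $1-Hd=2H\beta$ is precisely the critical exponent of Lemma \ref{int}). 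The decisive bookkeeping is then twofold. (i) Conservation of weight: adding $|x_i|$-exponents gives $\sum_{i=1}^m w_i=m\beta$ — each paired factor contributes $2\beta$, the gain contributes $\beta$, each remaining difference contributes $\beta$, and for $k$ even the factor $\widehat f(-x_k)-\widehat f(0)$ contributes $\beta$ — hence $\sum_i(1-Hd-Hw_i)=m(1-Hd)-H\,m\beta=\tfrac{m(1-Hd)}2$. (ii) The logarithm count is at most $\tfrac{m-1}2$: the variable $x_k$ is hit by only one brick (the one coming from the gain, resp. from $\widehat f(-x_k)-\widehat f(0)$), plus at most a sub-critical cross-term weight, so $w_k<2\beta$ and $x_k$ is never a logarithmic slot; distributing the remaining bricks among $x_{k+1},\dots,x_m$ and the paired factors and using $\sum_iw_i=m\beta$, a short parity count shows that the $\tfrac{k-1}2$ (resp. $\tfrac{k-2}2$) logarithms forced by the paired factors, together with those that may occur among $x_{k+1},\dots,x_m$, never exceed $\tfrac{m-1}2$.

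Putting this together, since the critical normalization $(\ref{switch})$ gives $\ell^m_{n,H,d}\,n^{-m}=(\ln n)^{-m/2}n^{m(Hd-1)/2}$, each term in the sum is bounded by a constant multiple of
\[
n^{m(Hd-1)/2}\,(n(b-a))^{m(1-Hd)/2}\,(\ln n)^{L-m/2}=(b-a)^{m(1-Hd)/2}(\ln n)^{L-m/2},
\]
with $L\le\tfrac{m-1}2$ and $L-\tfrac m2\le-\tfrac12$, so the whole expression is at most $c\,(b-a)^{m(1-Hd)/2}(\ln n)^{-1/2}$ for all $n$ large, which is the assertion. (The constant produced by Lemma \ref{int} depends on $b-a$ and is tracked exactly as in Proposition \ref{chain1}.)

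The hard part will be the combinatorial bookkeeping in the second paragraph: verifying, uniformly in $k$ and over every monomial of the expansion, that the number of logarithmic slots is at most $\tfrac{m-1}2$ — informally, that the single unpaired brick which the gain (or, for $k$ even, the factor $\widehat f(-x_k)-\widehat f(0)$) necessarily places on $x_k$ always ``costs half a logarithm''. A second, genuinely technical point is the case $\beta=2$: since $\gamma_0=0$ one must take $\theta\in(0,1]$ in Lemma \ref{f}, and the resulting cross term $|x_k|^{\theta}|x_{k+1}|^{\beta-\theta}\wedge1$ must be integrated with its cut-off retained — integrating first in $x_{k+1}$, then in $x_k$ — because dropping the cut-off would spuriously lower the effective weight of $x_{k+1}$ and break the weight-conservation identity $\sum_iw_i=m\beta$.
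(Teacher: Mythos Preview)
Your proposal is correct and follows essentially the same approach as the paper. The paper carries out exactly the program you describe --- local nondeterminism plus the substitution $u_i=s_i-s_{i-1}$, Lemma \ref{f} with the maximal exponent $\alpha=\beta$ and cut-offs retained, expansion of the product into monomials indexed by the $p_i/\bar p_i$ choices, and Lemma \ref{int} slot by slot --- but organises it as four explicit cases ($\beta\in\{1,2\}$, $k$ odd/even) rather than through your unified ``brick'' and weight-conservation language. Your identity $\sum_i w_i=m\beta$ and the resulting power $(n(b-a))^{m(1-Hd)/2}$ are precisely what the paper obtains in each case after collecting exponents; the paper's logarithm count (via the sets $A$ or $B$ with $|A|\le\tfrac{(m-k-1)\vee0}2$ or $\tfrac{m-k}2$) is exactly the parity argument you outline, and in fact gives the slightly sharper bound $L\le\tfrac{m-2}2$. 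For the $\beta=2$ cross term the paper uses $\theta=1$ and drops the cut-off on $|x_k||x_{k+1}|$, then checks directly that the slot $k+1$ with factor $|x_{k+1}|(|x_{k+1}|^2\wedge1)^{p_{k+1}}$ still contributes the expected power (the $|x|>1$ tail is uniformly bounded because the Gaussian kills it for large $u$), which is the computation your final paragraph anticipates.
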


\begin{proof}  We divide the proof into two parts. 

\noindent
{\bf Part I}: We consider the case $\beta=1$. In this case $1-Hd=2H$. Suppose that  $k$ is odd. Applying the local nondeterminism property  (\ref{lndp}) and making the change of variable $u_1=s_1$, $u_i =s_i -s_{i-1}$, for $2\le i\le m$, we can show that $|I^n_{m,k-1}-I^n_{m,k}|$ is less than a constant multiple of
\begin{align*}
&(\ln n)^{-\frac{m}{2}}n^{m\frac{Hd-1}{2}}\int_{\R^{md}}\int_{O_m} \Big(\prod^m_{i=k+1}  |\widehat{f}(x_i-x_{i+1})-\widehat{f}(0)|\Big)\\
&\qquad\quad\times \big|\widehat{f}(x_k-x_{k+1})-\widehat{f}(-x_{k+1})\big|\Big(\prod^{\frac{k-1}{2}}_{j=1}|\widehat{f}(x_{2j})-\widehat{f}(0)|^2\Big)\, \exp\Big(-\frac{\kappa_{H,m}}{2} \sum\limits^m_{i=1} |x_i|^2u^{2H}_i\Big)\, du\, dx,
\end{align*}
where $O_{m}=\big\{ (u_k, \dots, u_m):   0<u_i \le n(b-a),    i=1,\dots,m\big\}$.

Applying Lemmas \ref{f} and \ref{int} and integrating with respect to $x_i$s and $u_i$s for $i\leq k-1$, we can obtain that
\begin{align*}
&|I^n_{m,k-1}-I^n_{m,k}|\\
&\qquad\leq c_1\, (b-a)^{\frac{k-1}{2}(1-Hd) }\, (\ln n)^{-\frac{m-k+1}{2}}\, n^{(m-k+1)\frac{Hd-1}{2}}\int_{\R^{(m-k+1)d}}\int_{O_{m,k}} (|x_{k}|\wedge 1)(|x_m|\wedge 1)\\
&\qquad\qquad\times  \prod^{m-1}_{i=k+1}  \big(|x_i|\wedge1+|x_{i+1}|\wedge 1\big) \exp\Big(-\frac{\kappa_{H,m}}{2} \sum\limits^m_{i=k} |x_i|^2u^{2H}_i\Big)\, d\overline{u}_k\, d\overline{x}_k\\
&\qquad\leq c_1\, (b-a)^{\frac{k-1}{2}(1-Hd) } (\ln n)^{-\frac{m-k+1}{2}} \, n^{(m-k+1)\frac{Hd-1}{2}}\sum_{\overline{\mathcal{S}}_k}\int_{\R^{(m-k+1)d}}\int_{O_{m,k}} (|x_{k}|\wedge 1)\\
&\qquad\qquad\times \Big(\prod^{m}_{i=k+1}  (|x_i|\wedge 1)^{p_i+\overline{p}_{i-1}}\Big) \exp\Big(-\frac{\kappa_{H,m}}{2} \sum\limits^m_{i=k} |x_i|^2u^{2H}_i\Big)\, d\overline{u}_k\, d\overline{x}_k,
\end{align*}
where $O_{m,k}=\big\{ (u_k, \dots, u_m): 0<u_i \le n(b-a),    i=k,\dots,m\big\}$, $d\overline{u}_k=du_k\cdots du_{m}$, $d\overline{x}_k=dx_k\cdots dx_{m}$ and $
\overline{\mathcal{S}}_k=\big\{p_k=p_m=1, p_i\in \{0,1\}, p_i+\overline{p}_i=1,\, i=k,\dots,m\big\}$.

By Lemma \ref{int}, 
\begin{align*}
&|I^n_{m,k-1}-I^n_{m,k}|\\
&\leq c_2\, (b-a)^{\frac{k-1}{2}(1-Hd) }(\ln n)^{-\frac{m-k+1}{2}} \, n^{(m-k+1)\frac{Hd-1}{2}} (nb-na)^{(m-k+1)(1-Hd-H)} (\ln n)^{|A|}\\
&=c_2\, (b-a)^{\frac{m(1-Hd)}{2}}\, (\ln n)^{-\frac{m-k+1}{2}+|A|},
\end{align*}
where $A=\{i=k+1,\cdots, m: p_i+\overline{p}_{i-1}=2\}$ and $|A|$ is the number of elements in $A$.

Note that $\overline{p}_{k}=0$. Hence $|A|\leq \frac{(m-k-1)\vee 0}{2}$ and 
\[
|I^n_{m,k-1}-I^n_{m,k}|\leq c_3\, (b-a)^{\frac{m(1-Hd)}{2}}\, (\ln n)^{-\frac{1}{2}}.
\]

\medskip
 
Suppose now that $k$ is even. Using similar notation and arguments as above, we can obtain that
\begin{align*}
&|I^n_{m,k-1}-I^n_{m,k}|\\
&\leq c_4\, (\ln n)^{-\frac{m}{2}}  n^{m\frac{Hd-1}{2}}\int_{\R^{md}}\int_{O_m} \Big(\prod^{\frac{k-2}{2}}_{j=1}|\widehat{f}(x_{2j})-\widehat{f}(0)|^2\Big) \big|\widehat{f}(-x_k)-\widehat{f}(0)\big|\big|\widehat{f}(x_k-x_{k+1})-\widehat{f}(x_k)\big| \\
&\qquad\qquad\qquad\times \prod^m_{i=k+1}  |\widehat{f}(x_i-x_{i+1})-\widehat{f}(0)|\, \exp\Big(-\frac{\kappa_{H,m}}{2} \sum\limits^m_{i=1} |x_i|^2u^{2H}_i\Big)\, du\, dx\\
&\leq c_5\, (b-a)^{\frac{k}{2}(1-Hd)}(\ln n)^{-\frac{m-k+2}{2}} \, n^{(m-k)\frac{Hd-1}{2}}  \sum_{\widetilde{\mathcal{S}}_k} \int_{\R^{(m-k+1)d}}\int_{O_{m,k}}  (|x_k|\wedge 1) \prod^{m}_{i=k+1}  (|x_i|\wedge 1)^{p_i+\overline{p}_{i-1}} \\
&\qquad\qquad\qquad \times   \exp\Big(-\frac{\kappa_{H,m}}{2} \sum\limits^m_{i=k} |x_i|^2u^{2H}_i\Big)\, d\overline{u}_k\, d\overline{x}_k,
\end{align*}
where $\widetilde{\mathcal{S}}_k=\big\{\overline{p}_k=1, p_m=1, p_i\in \{0,1\}, p_i+\overline{p}_i=1,\, i=k,\dots,m\big\}$. 

By Lemma \ref{int},
\begin{align*}
&|I^n_{m,k-1}-I^n_{m,k}|\\
&\leq c_6\, (b-a)^{\frac{k}{2}(1-Hd) } (\ln n)^{-\frac{m-k+2}{2}} \, n^{(m-k)\frac{Hd-1}{2}} (nb-na)^{(m-k+1)(1-Hd)-(m-k+2)H}(\ln n)^{|A|},
\end{align*}
where $A=\{i=k+1,\cdots, m: p_i+\overline{p}_{i-1}=2\}$.

Clearly $|A|\leq \frac{m-k}{2}$. Hence
\begin{align*}
|I^n_{m,k-1}-I^n_{m,k}|\leq c_7\, (b-a)^{\frac{m(1-Hd)}{2}}\, (\ln n)^{-\frac{1}{2}}.
\end{align*}

\noindent
{\bf Part II}: We consider the case $\beta=2$. In this case $1-Hd=4H$. Suppose that $k$ is odd. Using the similar notation and arguments as in {\bf Part I}, we can show that $|I^n_{m,k-1}-I^n_{m,k}|$ is less than a constant multiple of
\begin{align*}
&(\ln n)^{-\frac{m}{2}} \, n^{m\frac{Hd-1}{2}}\int_{\R^{md}}\int_{O_m} \Big(\prod^m_{i=k+1}  |\widehat{f}(x_i-x_{i+1})-\widehat{f}(0)|\Big)\\
&\qquad\quad\times \big|\widehat{f}(x_k-x_{k+1})-\widehat{f}(-x_{k+1})\big|\Big(\prod^{\frac{k-1}{2}}_{j=1}|\widehat{f}(x_{2j})-\widehat{f}(0)|^2\Big)\, \exp\Big(-\frac{\kappa_{H,m}}{2} \sum\limits^m_{i=1} |x_i|^2u^{2H}_i\Big)\, du\, dx.
\end{align*}
By Lemma \ref{f}, $|I^n_{m,k-1}-I^n_{m,k}|$ is less than a constant multiple of
\begin{align*}
&(\ln n)^{-\frac{m}{2}} n^{m\frac{Hd-1}{2}}\int_{\R^{md}}\int_{O_m}  \Big((|x_{k}|^2+|x_{k}||x_{k+1}|)\wedge 1\Big)  \big(|x_m|^2\wedge 1\big)\prod^{m-1}_{i=k+1} \Big(|x_i|^2\wedge 1+|x_{i+1}|^2\wedge 1\Big)  \\
&\qquad\qquad\qquad\times \Big(\prod^{\frac{k-1}{2}}_{j=1}|\widehat{f}(x_{2j})-\widehat{f}(0)|^2\Big)\exp\Big(-\frac{\kappa_{H,m}}{2} \sum\limits^m_{i=1} |x_i|^2u^{2H}_i\Big)\, du\, dx.
\end{align*} 

Using similar arguments as in {\bf Part I}, we can obtain that 
\begin{align*}
&(\ln n)^{-\frac{m}{2}} n^{m\frac{Hd-1}{2}}\int_{\R^{md}}\int_{O_m}  (|x_{k}|^2\wedge 1)  (|x_m|^2 \wedge 1) \prod^{m-1}_{i=k+1}  \Big(|x_i|^2\wedge 1+|x_{i+1}|^2\wedge 1\Big)  \\
&\qquad\qquad\qquad\times \Big(\prod^{\frac{k-1}{2}}_{j=1}|\widehat{f}(x_{2j})-\widehat{f}(0)|^2\Big)\exp\Big(-\frac{\kappa_{H,m}}{2} \sum\limits^m_{i=1} |x_i|^2u^{2H}_i\Big)\, du\, dx
\end{align*}
is less than a constant multiple of $(b-a)^{\frac{m(1-Hd)}{2}}(\ln n)^{-\frac{1}{2}}$. 

So it suffices to estimate
\begin{align*}
&\text{II}:=(\ln n)^{-\frac{m}{2}} n^{m\frac{Hd-1}{2}}\int_{\R^{md}}\int_{O_m}  |x_{k}||x_{k+1}| (|x_m|^2\wedge 1) \prod^{m-1}_{i=k+1}  \Big(|x_i|^2\wedge 1+|x_{i+1}|^2\wedge 1\Big)  \\
&\qquad\qquad\qquad\times \Big(\prod^{\frac{k-1}{2}}_{j=1}|\widehat{f}(x_{2j})-\widehat{f}(0)|^2\Big)\exp\Big(-\frac{\kappa_{H,m}}{2} \sum\limits^m_{i=1} |x_i|^2u^{2H}_i\Big)\, du\, dx.
\end{align*}
Applying Lemma \ref{int} and integrating with respect to $x_i$s and $u_i$s with $i\leq k-1$ give
\begin{align*}
\text{II} &\leq c_8\, (b-a)^{\frac{k-1}{2}(1-Hd) } (\ln n)^{-\frac{m-k+1}{2}}  \, n^{(m-k+1)\frac{Hd-1}{2}}\sum_{\overline{\overline{\mathcal{S}}}_k}\int_{\R^{(m-k+1)d}}\int_{O_{m,k}}  |x_{k}||x_{k+1}| \\
&\qquad\qquad\times (|x_{k+1}|^{2}\wedge 1)^{p_{k+1}} \Big( \prod^{m}_{i=k+2}  (|x_i|^{2}\wedge 1)^{p_i+\overline{p}_{i-1}} \Big) \exp\Big(-\frac{\kappa_{H,m}}{2} \sum\limits^m_{i=k} |x_i|^2u^{2H}_i\Big)\, d\overline{u}_k\, d\overline{x}_k,
\end{align*}
where $\overline{\overline{\mathcal{S}}}_k=\big\{p_m=1, p_i\in \{0,1\}, p_i+\overline{p}_i=1,\, i=k+1,\dots,m\big\}$.

By Lemma \ref{int}, 
\begin{align*}
\text{II} &\leq c_8\, (b-a)^{\frac{k-1}{2}(1-Hd) } (\ln n)^{-\frac{m-k+1}{2}}  \, n^{(m-k+1)\frac{Hd-1}{2}} (nb-na)^{(m-k+1)(1-Hd)-2(m-k+1)H} (\ln n)^{|B|},
\end{align*}
where $B=\{i=k+2,\cdots, m: p_i+\overline{p}_{i-1}=2\}$ and $|B|$ is the number of elements in $B$.

Note that  $|B|\leq \frac{(m-k-1)\vee 0}{2}$. Hence
\begin{align*}
\text{II}  \leq c_9\, (b-a)^{\frac{m(1-Hd)}{2}}\, (\ln n)^{-\frac{1}{2}}.
\end{align*}

Suppose now that $k$ is even. Using similar notation and arguments as above,
\begin{align*}
&|I^n_{m,k-1}-I^n_{m,k}|\\
&\leq c_{10}\, (\ln n)^{-\frac{m}{2}}\, n^{m\frac{Hd-1}{2}}\int_{\R^{md}}\int_{O_m} \Big(\prod^{\frac{k-2}{2}}_{j=1}|\widehat{f}(x_{2j})-\widehat{f}(0)|^2\Big) \big|\widehat{f}(-x_k)-\widehat{f}(0)\big|\big|\widehat{f}(x_k-x_{k+1})-\widehat{f}(x_k)\big| \\
&\qquad\qquad\qquad\qquad\times \prod^m_{i=k+1}  |\widehat{f}(x_i-x_{i+1})-\widehat{f}(0)|\, \exp\Big(-\frac{\kappa_{H,m}}{2} \sum\limits^m_{i=1} |x_i|^2u^{2H}_i\Big)\, du\, dx\\
&\leq c_{11}\, (b-a)^{\frac{k}{2}(1-Hd) }(\ln n)^{-\frac{m-k+2}{2}} \, n^{(m-k)\frac{Hd-1}{2}} \int_{\R^{(m-k+1)d}}\int_{O_{m,k}} (|x_k|^2\wedge 1)\Big((|x_{k+1}|^2+|x_{k+1}||x_k|)\wedge 1 \Big)\\
&\qquad\qquad\qquad\times (|x_m|^2 \wedge 1) \Big(\prod^{m-1}_{i=k+1}  (|x_i|^2\wedge 1+|x_{i+1}|^2\wedge 1)\Big)\exp\Big(-\frac{\kappa_{H,m}}{2} \sum\limits^m_{i=k} |x_i|^2u^{2H}_i\Big)\, d\overline{u}_k\, d\overline{x}_k.
\end{align*}

Using similar arguments as in {\bf Part I}, we can obtain that 
\begin{align*}
&(b-a)^{\frac{k}{2}(1-Hd) }(\ln n)^{-\frac{m-k+2}{2}} \, n^{(m-k)\frac{Hd-1}{2}} \int_{\R^{(m-k+1)d}}\int_{O_{m,k}} (|x_k|^2\wedge 1)(|x_{k+1}|^2\wedge 1) \\
&\qquad\qquad\times (|x_m|^2\wedge 1) \Big(\prod^{m-1}_{i=k+1}  (|x_i|^2\wedge 1+|x_{i+1}|^2\wedge 1)\Big)\exp\Big(-\frac{\kappa_{H,m}}{2} \sum\limits^m_{i=k} |x_i|^2u^{2H}_i\Big)\, d\overline{u}_k\, d\overline{x}_k.
\end{align*}
is less than a constant multiple of $(b-a)^{\frac{m(1-Hd)}{2}}(\ln n)^{-\frac{1}{2}}$. 

So it suffices to estimate
\begin{align*}
&\text{III}:= (b-a)^{\frac{k}{2}(1-Hd) } (\ln n)^{-\frac{m-k+2}{2}}  \, n^{(m-k)\frac{Hd-1}{2}} \int_{\R^{(m-k+1)d}}\int_{O_{m,k}}  (|x_k|^2\wedge 1) |x_k| |x_{k+1}| (|x_m|^2\wedge 1)\\
&\qquad\qquad\qquad\times \Big(\prod^{m-1}_{i=k+1}  (|x_i|^2\wedge 1+|x_{i+1}|^2\wedge 1)\Big)\exp\Big(-\frac{\kappa_{H,m}}{2} \sum\limits^m_{i=k} |x_i|^2u^{2H}_i\Big)\, d\overline{u}_k\, d\overline{x}_k.
\end{align*}

Note that
\begin{align*}
&\text{III}\leq (b-a)^{\frac{k}{2}(1-Hd) } (\ln n)^{-\frac{m-k+2}{2}}  \, n^{(m-k)\frac{Hd-1}{2}} \sum_{\overline{\overline{\mathcal{S}}}_k}\int_{\R^{(m-k+1)d}}\int_{O_{m,k}}  |x_k|^3 |x_{k+1}| \\
&\qquad\qquad\qquad\times (|x_{k+1}|^2\wedge 1)^{p_{k+1}} \Big(\prod^{m}_{i=k+2}  (|x_i|^2\wedge 1)^{p_{i}+\overline{p}_{i-1}}\Big)\exp\Big(-\frac{\kappa_{H,m}}{2} \sum\limits^m_{i=k} |x_i|^2u^{2H}_i\Big)\, d\overline{u}_k\, d\overline{x}_k,
\end{align*}
where $\overline{\overline{\mathcal{S}}}_k=\big\{p_m=1, p_i\in \{0,1\}, p_i+\overline{p}_i=1,\, i=k+1,\dots,m\big\}$.

By Lemma \ref{int}, 
\begin{align*}
\text{III}&\leq c_{12}\, (b-a)^{\frac{k}{2}(1-Hd) } (\ln n)^{-\frac{m-k+2}{2}}  \, n^{(m-k)\frac{Hd-1}{2}}  (nb-na)^{(m-k+1)(1-Hd)-2(m+2-k)H}(\ln n)^{|B|},
\end{align*}
where $B=\{i=k+2,\cdots, m: p_i+\overline{p}_{i-1}=2\}$.

Note that  $|B|\leq \frac{(m-k-1)\vee 0}{2}$. Hence
\begin{align*}
\text{III}  \leq c_{13}\, (b-a)^{\frac{m(1-Hd)}{2}} \, (\ln n)^{-\frac{1}{2}}.
\end{align*}
This completes the proof.
\end{proof}

\bigskip

\section{Proofs of the main results}

Both proofs of Theorems \ref{th1} and \ref{th2} will be done in two steps. We first
show the tightness and then establish the convergence of moments. Tightness will be deduced from the following proposition.

\begin{proposition} \label{tight}
For any $0\leq a<b \leq t$ and any integer $m\geq 1$, there exists a positive constant $C$ depending on $H$, $m$, $d$, $\beta$, $t$ and $f$ such that
\begin{equation*}
\E\big[(F_{n}(b)-F_{n}(a))^{2m}\big] 
 \leq C
\begin{cases}
(b-a)^{m(1-Hd)-\gamma} &   \text{if} \quad \frac{1}{2\beta+d}<H<\frac{1}{d} \; \text{with}\; \beta\in (0,2];\\ \\
(b-a)^{m(1-Hd)} &   \text{if} \quad  H=\frac{1}{2\beta+d} \; \text{with}\; \beta\in \{1,2\},
\end{cases}
\end{equation*}
where $\gamma$ is the positive constant in Proposition \ref{chain1}.

\end{proposition}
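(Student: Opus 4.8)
The plan is to combine the telescoping identity for $I^n_{2m}:=\E\big[(F_n(b)-F_n(a))^{2m}\big]=I^n_{2m,0}$ with the two enhanced chaining estimates. Writing
\[
I^n_{2m}=I^n_{2m,2m}+\sum_{k=1}^{2m}\big(I^n_{2m,k-1}-I^n_{2m,k}\big),
\]
it suffices to control the surviving term $I^n_{2m,2m}$ and each increment. For the increments I would apply Proposition \ref{chain1} (when $\frac1{2\beta+d}<H<\frac1d$) or Proposition \ref{chain2} (when $H=\frac1{2\beta+d}$) with the generic index $m$ in those statements replaced by $2m$; summing the $2m$ terms and using $n^{-\gamma}\le1$ (resp. $(\ln n)^{-1/2}\le1$) gives $\sum_{k}|I^n_{2m,k-1}-I^n_{2m,k}|\le c\,(b-a)^{m(1-Hd)-\gamma}$ (resp. $\le c\,(b-a)^{m(1-Hd)}$). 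So everything reduces to showing that $I^n_{2m,2m}$ is of the same order.

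For $I^n_{2m,2m}$ I would argue directly from its integral representation; since $2m$ is even, here $I_{2m}=\prod_{j=1}^m|\widehat{f}(x_{2j})-\widehat{f}(0)|^2$. Because $|e^{-\iota n^H\lambda\cdot x_1}|=1$ and the remaining integrand is nonnegative, the phase may be dropped; applying the local nondeterminism property (\ref{lndp}) and substituting $u_1=s_1$, $u_i=s_i-s_{i-1}$ for $2\le i\le2m$ (and enlarging the domain to $\{na<u_1<nb\}\times\{0<u_i<n(b-a)\}_{i=2}^{2m}$) yields
\[
|I^n_{2m,2m}|\le c\,\ell_{n,H,d}^{2m}\,n^{-2m}\int_{\R^{2md}}\!\int \prod_{j=1}^m|\widehat{f}(x_{2j})-\widehat{f}(0)|^2\,\exp\Big(-\tfrac{\kappa_{H,2m}}{2}\sum_{i=1}^{2m}|x_i|^2u_i^{2H}\Big)\,du\,dx.
\]
The odd-indexed variables $x_1,x_3,\dots,x_{2m-1}$ occur only in the Gaussian factor; integrating them out produces factors $u_i^{-Hd}$, and then $\int_{na}^{nb}u_1^{-Hd}\,du_1\le c\,n^{1-Hd}(b-a)^{1-Hd}$ (by subadditivity of $t\mapsto t^{1-Hd}$) together with $m-1$ factors $\int_0^{n(b-a)}u_i^{-Hd}\,du_i=c\,(n(b-a))^{1-Hd}$, contributing $(b-a)^{m(1-Hd)}$ and $n^{m(1-Hd)}$ in total. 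For the even-indexed pairs $(x_{2j},u_{2j})$ I would bound $|\widehat{f}(x_{2j})-\widehat{f}(0)|\le c(|x_{2j}|^\alpha\wedge1)$ via Lemma \ref{f} and invoke Lemma \ref{int} with $p=2$ and $T=b-a\le t$: when $H>\frac1{2\beta+d}$ one can pick $\alpha\in\big(\frac{1-Hd}{2H},\beta\big]$, so each such integral is bounded by a constant; when $H=\frac1{2\beta+d}$ one is forced to take $\alpha=\beta$, so $2H\alpha=1-Hd$ and each such integral is $\le c\ln n$.

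Putting the powers together, the $m$ even-index factors contribute $1$ (resp. $(\ln n)^m$), while $\ell_{n,H,d}^{2m}n^{-2m}=n^{m(Hd-1)}$ (resp. $(\ln n)^{-m}n^{m(Hd-1)}$), so all powers of $n$ cancel exactly and, in the second case, the $\ln n$ factors cancel as well, leaving $|I^n_{2m,2m}|\le C(b-a)^{m(1-Hd)}$. Since $0\le a<b\le t$ this is $\le C(b-a)^{m(1-Hd)-\gamma}$, and combined with the chaining estimates it gives the claimed bound. The main obstacle is exactly the estimate of $I^n_{2m,2m}$: a crude bound leaves a positive power of $n$, and it is only the hypothesis $H\ge\frac1{2\beta+d}$ — which allows $\alpha\ge\frac{1-Hd}{2H}$, so the "even" $u$-integrals are bounded (or merely logarithmic and absorbed by the $\ell_{n,H,d}$ normalization) — that makes the $n$-bookkeeping close; one also has to keep track of the fact that $u_1=s_1$ ranges over $(na,nb)$ rather than $(0,n(b-a))$ and that the constant in Lemma \ref{int} stays under control, which is why the restriction $b-a\le t$ is imposed.
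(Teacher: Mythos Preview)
Your proposal is correct and follows essentially the same approach as the paper: telescope $I^n_{2m,0}$ down to $I^n_{2m,2m}$ via Propositions~\ref{chain1}/\ref{chain2}, then bound $I^n_{2m,2m}$ directly after applying the local nondeterminism~\eqref{lndp} and the change of variables $u_i=s_i-s_{i-1}$. One small remark on the first case $H>\tfrac{1}{2\beta+d}$: the paper simply extends the even $u_{2j}$-integrals to $(0,\infty)$ and arrives at the finite constant $\big(\int_{\R^d}|\widehat f(x)-\widehat f(0)|^2|x|^{-1/H}\,dx\big)^m$, whereas you first bound $|\widehat f(x)-\widehat f(0)|$ via Lemma~\ref{f} and then cite Lemma~\ref{int}; be aware that Lemma~\ref{int} as stated carries the standing hypothesis $1-Hd=2H\alpha$, so it does not literally apply to your choice $\alpha>\tfrac{1-Hd}{2H}$ --- the bound you actually need is, however, the obvious one-line estimate $\int_0^\infty\int_{\R^d}(|x|^{\alpha}\wedge1)^2e^{-c|x|^2u^{2H}}\,dx\,du<\infty$.
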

\begin{proof} Note that $\E\big[(F_{n}(b)-F_{n}(a))^{2m}\big] =I^n_{2m,0}$. In the case $\frac{1}{2\beta+d}<H<\frac{1}{H}$, by Proposition \ref{chain1},
\begin{align*} 
|I^n_{2m,0}-I^n_{2m,2m}|\leq c_1\,  (b-a)^{m(1-Hd)-\gamma}\, n^{-\gamma}.
\end{align*}
In the case $H=\frac{1}{2\beta+d}$, by Proposition \ref{chain2},
\begin{align*} 
|I^n_{2m,0}-I^n_{2m,2m}|\leq c_2\,  (b-a)^{m(1-Hd)}\, (\ln n)^{-\frac{1}{2}}.
\end{align*}

So it suffices to estimate $I^n_{2m,2m}$. In the first case,
\begin{align*}   
|I^n_{2m,2m}|
&\leq c_3\, n^{m(Hd-1)}\int_{\R^{2md}}\int_{O_{2m}}\Big(\prod^m_{j=1}|\widehat{f}(x_{2j})-\widehat{f}(0)|^2\Big)\, \exp\Big(-\frac{\kappa_{H,2m}}{2}\sum\limits^{2m}_{i=1} |x_i|^2u^{2H}_i\Big)\, du\, dx\\ 
&\leq c_4\, (b-a)^{m(1-Hd)}\Big(\int_{\R^d}|\widehat{f}(x)-\widehat{f}(0)|^2|x|^{-\frac{1}{H}}\, dx\Big)^m,
\end{align*}
where $O_{2m}=\big\{ (u_1, \dots, u_{2m}):   0<u_i \le n(b-a),    i=1,\dots, 2m\big\}$.

In the second case,
\begin{align*}  
&|I^n_{2m,2m}|\\
&\leq c_5\, (\ln n)^{-m}\, n^{m(Hd-1)}\int_{\R^{2md}}\int_{O_{2m}}\Big(\prod^m_{j=1}|\widehat{f}(x_{2j})-\widehat{f}(0)|^2\Big)\, \exp\Big(-\frac{\kappa_{H,2m}}{2}\sum\limits^{2m}_{i=1} |x_i|^2u^{2H}_i\Big)\, du\, dx\\ 
&\leq c_6\, (b-a)^{m(1-Hd)}\Big(\frac{1}{\ln n}\int^{n(b-a)}_0 \int_{\R^d}|\widehat{f}(x)-\widehat{f}(0)|^2 e^{-\frac{1}{2}|x|^2u^{2H}}\, dx\, du\Big)^m\\
&\leq c_7\, (b-a)^{m(1-Hd)},
\end{align*}
where we use Lemmas \ref{f} and \ref{int} in the last inequality.

Combining all these estimates gives the desired result.
\end{proof}

\bigskip

Next we shall prove the convergence of all finite dimensional distributions. That is, we shall prove that the moments of $F_n(t)$ converge to the
corresponding ones of $W(L_t(\lambda))$.

Fix a finite number of disjoint intervals $(a_i, b_i]$ with
$i=1,\dots ,N$ and
 $b_i\le a_{i+1}$.  Let $\mathbf{m}=(m_1, \dots, m_N)$ be a fixed multi-index with $m_i\in\N$ for $i=1,\dots ,N$. Set $\sum\limits_{i=1}^N m_i=|\mathbf{m}|$ and  $\prod\limits_{i=1}^N m_i!=\mathbf{m}!$.
We need to consider the following sequence of random variables
 \[
        G_n=\prod_{i=1}^N \left( F_n(b_i)- F_n(a_i) \right)^{m_i}
 \]
and compute  $\lim\limits_{n\rightarrow  \infty}  \E(G_n) $. By Proposition \ref{tight}, we can assume that $b_i<a_{i+1}$ for $i=1,2\cdots, N-1$. Let 
\begin{equation}
D_\mathbf{m}=\big\{s \in \R^{|\mathbf{m}|}: na_{i}<s^i_{1}<\cdots <s^i_{m_i}<nb_{i}, 1\le i\le N\big\}. \label{e.3.1.4}
\end{equation}
Here and in the sequel we denote the coordinates of a point $s\in  \R^{|\mathbf{m}|}$ as
$s= (s^i_j)$, where   $ 1\le i \le   N$ and $1\le j \le m_i $.

For simplicity of notation, we define 
\[
J_0=\big\{(i,j): 1\leq i\leq N, 1\leq j\leq m_i \big\}.
\]
For any $(i_1, j_1)$ and $(i_2,j_2)\in J_0$, we define the following dictionary ordering 
\[
(i_1, j_1)\leq (i_2,j_2)
\]
if $i_1<i_2$ or $i_1=i_2$ and $j_1\leq j_2$. For any $(i,j)$ in $J_0$, under the above ordering, $(i,j)$ is the $(\sum\limits^{i-1}_{k=1}m_k+j)$-th element in $J_0$ and we define $\#(i,j)=\sum\limits^{i-1}_{k=1}m_k+j$.

\begin{proposition} \label{odd} Suppose that at least one of the exponents $m_i$ is odd. Then
\begin{equation*}
\lim\limits_{n\to\infty}\E(G_n)=0.
\end{equation*}
\end{proposition}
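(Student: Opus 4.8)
The plan is to start from $\E(G_n)=I^n_{\mathbf{m},0}$, where for $k=0,1,\dots,|\mathbf{m}|$ we let $I^n_{\mathbf{m},k}$ be the quantity defined exactly as $I^n_{m,k}$ in Section 3 but with $m$ replaced by $|\mathbf{m}|$ and the one-block domain $D_{|\mathbf{m}|}$ replaced by the multi-block domain $D_{\mathbf{m}}$. Since $D_{\mathbf{m}}$ is contained in the one-block domain attached to $[a_1,b_N]$, and since the bounds of Propositions \ref{chain1} and \ref{chain2} are obtained after passing to absolute values inside the integrals — hence do not decrease when the domain of integration is enlarged — those same bounds give $|I^n_{\mathbf{m},k-1}-I^n_{\mathbf{m},k}|\le c\,n^{-\gamma}$ when $\frac{1}{2\beta+d}<H<\frac{1}{d}$, and $|I^n_{\mathbf{m},k-1}-I^n_{\mathbf{m},k}|\le c\,(\ln n)^{-1/2}$ when $H=\frac{1}{2\beta+d}$, for each $k$. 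Summing over $k$ gives $\E(G_n)=I^n_{\mathbf{m},|\mathbf{m}|}+o(1)$, so it suffices to prove $I^n_{\mathbf{m},|\mathbf{m}|}\to 0$. This is immediate when $|\mathbf{m}|$ is odd: then the kernel $I_{|\mathbf{m}|}$ carries the factor $\widehat{f}(-x_{|\mathbf{m}|+1})-\widehat{f}(0)$, which vanishes because $x_{|\mathbf{m}|+1}=0$ by convention, so $I^n_{\mathbf{m},|\mathbf{m}|}\equiv 0$. In particular the conclusion holds whenever the number of odd exponents among $m_1,\dots,m_N$ is odd.

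So I would assume $|\mathbf{m}|=2M$ is even while at least one $m_i$ is odd; then at least two of the $m_i$ are odd. Here $I_{|\mathbf{m}|}=\prod_{j=1}^{M}|\widehat{f}(x_{2j})-\widehat{f}(0)|^2$, which amounts to pairing the $|\mathbf{m}|$ indices as $\{1,2\},\{3,4\},\dots,\{2M-1,2M\}$. Let $i_0$ be the smallest index with $m_{i_0}$ odd and set $P_i=\sum_{k\le i}m_k$. Then $P_{i_0-1}$ is a sum of even numbers, hence even, so block $i_0$ occupies the positions $P_{i_0-1}+1,\dots,P_{i_0}$ with $P_{i_0}$ odd; consequently the pair $\{P_{i_0},P_{i_0}+1\}$ straddles block $i_0$ and block $i_0+1$. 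For this pair the increment $u_{P_{i_0}+1}=s^{i_0+1}_{1}-s^{i_0}_{m_{i_0}}$ is at least $n(a_{i_0+1}-b_{i_0})=:n\delta$ with $\delta>0$, since we may assume $b_i<a_{i+1}$.

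To estimate $I^n_{\mathbf{m},|\mathbf{m}|}$ I would bound $\exp(-\tfrac12\Var(\cdots))\le\exp(-c\sum_i|x_i|^2(s_i-s_{i-1})^{2H})$ by Lemma \ref{lema1}, drop the unimodular factor $\exp(-\iota n^H\lambda\cdot x_1)$, change variables $u_1=s_1$, $u_i=s_i-s_{i-1}$, and use that the Gaussian factor then splits over the $x_i$. The $x$-integrals over the odd positions equal $\int_{\R^d}e^{-c|x|^2u^{2H}}\,dx=c\,u^{-Hd}$, and those over the even positions equal $g(u):=\int_{\R^d}|\widehat{f}(x)-\widehat{f}(0)|^2e^{-c|x|^2u^{2H}}\,dx$; by Lemma \ref{f}, Lemma \ref{int} and the computation in the proof of Proposition \ref{tight}, $\int_0^{\infty}g(u)\,du<\infty$ when $\frac{1}{2\beta+d}<H$, whereas $\int_0^{cn}g(u)\,du=O(\ln n)$ but $\int_{n\delta}^{cn}g(u)\,du=O(1)$ when $H=\frac{1}{2\beta+d}$. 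Overestimating $D_{\mathbf{m}}$ by the product of the ranges of the $u_i$ — each a subinterval of $(0,cn]$, each cross-block $u_i$ additionally bounded below by a fixed multiple of $n$ — turns the integral into a product over the $M$ pairs, in which every pair contributes $\big(\int u^{-Hd}\,du\big)\big(\int g(u)\,du\big)=O(n^{1-Hd})\cdot O(1)$, respectively $O(n^{1-Hd})\cdot O(\ln n)$, except that the straddling pair $\{P_{i_0},P_{i_0}+1\}$ contributes $O(n^{1-Hd})\cdot\int_{n\delta}^{cn}g$, which is $o(n^{1-Hd})$, respectively $O(n^{1-Hd})$ and hence $o(n^{1-Hd}\ln n)$. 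Since $\ell^{|\mathbf{m}|}_{n,H,d}\,n^{-|\mathbf{m}|}$ exactly balances the contribution of $M$ within-block pairs, the presence of at least one straddling pair forces $I^n_{\mathbf{m},|\mathbf{m}|}\to 0$, whence $\E(G_n)\to 0$.

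I expect the main obstacle to be the combinatorial observation that, as soon as some $m_i$ is odd, the diagonal pairing underlying $I_{|\mathbf{m}|}$ must contain a pair straddling two consecutive blocks, together with the quantitative step showing that such a pair is penalized by a factor tending to $0$ relative to a within-block pair — precisely enough that the normalization $\ell_{n,H,d}$ is overcome in both scaling regimes. Once the pairing structure of $I_{|\mathbf{m}|}$ is understood, the remaining integral estimates are of the same type as those already carried out in Proposition \ref{tight}.
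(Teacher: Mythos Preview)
Your proposal is correct and follows essentially the same route as the paper: reduce $\E(G_n)$ to the fully paired term via the chaining Propositions \ref{chain1}--\ref{chain2}, dispose of the case $|\mathbf{m}|$ odd trivially, and for $|\mathbf{m}|$ even locate the first odd $m_{i_0}$ so that the pair $\{P_{i_0},P_{i_0}+1\}$ straddles two blocks, then use the lower bound $u_{P_{i_0}+1}\ge n(a_{i_0+1}-b_{i_0})$ together with local nondeterminism to extract the decisive $o(1)$ factor. The paper carries out exactly this computation, isolating the two variables $s^{\ell}_{m_\ell}$ and $s^{\ell+1}_1$ and arriving at the bound \eqref{iin}; your product-over-pairs bookkeeping is a mild repackaging of the same estimate.
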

\begin{proof}  Using Fourier transform, we see that $\E (G_n)$ is equal to
\begin{align*}
& \frac{\mathbf{m}!}{(2\pi)^{|\mathbf{m}|d}}\,   \left(\frac{\ell_{n,H, d}}{n} \right)^{|\mathbf{m}|} \int_{\R^{|\mathbf{m}|d}}  \int_{D_\mathbf{m}} 
           \Big( \prod^{N}_{i=1}\prod^{m_i}_{j=1} (\widehat{f}(y^i_j)-\widehat{f}(0))\Big) \\
           &\qquad\qquad\times \exp\bigg(-\iota n^H\lambda\cdot \sum^{N}_{i=1}\sum^{m_i}_{j=1} y^i_j-\frac{1}{2}\Var\Big( \sum^{N}_{i=1}\sum^{m_i}_{j=1} y^i_j\cdot X_{s^i_j} \Big)\bigg) \, ds\, dy.
\end{align*}
Making the change of variables $x^i_j=\sum\limits_{(\ell,k)\geq (i,j)} y^{\ell}_k$ for $1\leq i \leq N$ and $1\leq j\leq m_i$,
\begin{align*}
\E (G_n)
&=\frac{\mathbf{m}!}{(2\pi)^{|\mathbf{m}|d}}\,   \left(\frac{\ell_{n,H, d}}{n} \right)^{|\mathbf{m}|} \int_{\R^{|\mathbf{m}|d}}  \int_{D_\mathbf{m}} 
            \prod^{N}_{i=1}\prod^{m_i}_{j=1} (\widehat{f}(x^i_j-x^i_{j+1})-\widehat{f}(0))\\
&\qquad\qquad\times \exp\bigg(-\iota n^H\lambda\cdot x^1_1-\frac{1}{2}\Var\Big( \sum^{N}_{i=1}\sum^{m_i}_{j=1} x^i_j\cdot \big(X_{s^i_j}-X_{s^i_{j-1}}\big) \Big)\bigg) \, ds\, dx,
\end{align*}
where $s^i_0=s^{i-1}_{m_{i-1}}$ for $i=2,\dots,N$ and $s^1_0=0$.

Applying Propositions \ref{chain1} and \ref{chain2},  we  obtain
\begin{align*}
\lim_{n\to\infty}\E (G_n)
&=\frac{\mathbf{m}!}{(2\pi)^{|\mathbf{m}|d}}\,  \lim_{n\to\infty}  \left(\frac{\ell_{n,H, d}}{n} \right)^{|\mathbf{m}|}  \int_{\R^{|\mathbf{m}|d}}  \int_{D_\mathbf{m}} 
           \bigg( \prod_{(i,j)\in J_e} |\widehat{f}(x^i_j)-\widehat{f}(0)|^2\bigg)\, I_{|\mathbf{m}|}\\
&\qquad\qquad\times \exp\bigg(-\iota n^H\lambda\cdot x^1_1-\frac{1}{2}\Var\Big( \sum^{N}_{i=1}\sum^{m_i}_{j=1} x^i_j\cdot \big(X_{s^i_j}-X_{s^i_{j-1}}\big) \Big)\bigg) \, ds\, dx,
\end{align*}
where $J_e=\big\{(i,j)\in J_0: \#(i,j)\; \text{is even}\big\}$ and 
\[
I_{|\mathbf{m}|}=
\begin{cases}
\widehat{f}(x^N_{m_N})-\widehat{f}(0), & \text{if}\; |\mathbf{m}| \; \text{is odd};\\  \\
1, & \text{if}\; |\mathbf{m}| \; \text{is even}.
\end{cases}
\]

It is easy to see that $\lim\limits_{n\to\infty}\E (G_n)=0$ when $|\mathbf{m}|$ is odd. So it suffices to show $\lim\limits_{n\to\infty}\E (G_n)=0$ when $|\mathbf{m}|$ is even. In this case,
\begin{align*}
\limsup_{n\to\infty}|\E (G_n)|
&\leq \frac{\mathbf{m}!}{(2\pi)^{|\mathbf{m}|d}}\,  \limsup_{n\to\infty}  \left(\frac{\ell_{n,H, d}}{n} \right)^{|\mathbf{m}|}  \int_{\R^{|\mathbf{m}|d}}  \int_{D_\mathbf{m}} 
           \Big( \prod_{(i,j)\in J_e} |\widehat{f}(x^i_j)-\widehat{f}(0)|^2\Big)\\
&\qquad\qquad\times \exp\bigg(-\frac{1}{2}\Var\Big( \sum^{N}_{i=1}\sum^{m_i}_{j=1} x^i_j\cdot \big(X_{s^i_j}-X_{s^i_{j-1}}\big) \Big)\bigg) \, ds\, dx.
\end{align*}
Using the local nondeterminism property (\ref{lndp}),
\begin{align*}
\limsup_{n\to\infty}\big|\E (G_n)\big|
&\leq c_1  \limsup_{n\to\infty}  \left(\frac{\ell_{n,H, d}}{n} \right)^{|\mathbf{m}|} \int_{\R^{|\mathbf{m}|d}}  \int_{D_\mathbf{m}} 
           \Big( \prod_{(i,j)\in J_e} |\widehat{f}(x^i_j)-\widehat{f}(0)|^2\Big)\\
&\qquad\qquad\times \exp\bigg(-\frac{\kappa_{H,|\mathbf{m}|}}{2}\sum^{N}_{i=1}\sum^{m_i}_{j=1} |x^i_j|^2(s^i_j-s^i_{j-1})^{2H}\bigg) \, ds\, dx\\
&:= c_1 \limsup_{n\to\infty} \text{II}_n.
\end{align*}

Assume that $m_{\ell}$ is the first odd exponent.  Integrating with respect to proper $x^i_j$s and $s^i_j$s gives
\begin{align*}
\text{II}_n &\leq c_2\, \left(\frac{\ell_{n,H, d}}{n}\right)^2 \sup_{s^{\ell}_{m_{\ell}-1}\in (na_\ell,nb_{\ell}]} \int_{\R^{d}}  \int^{nb_{\ell}}_{s^{\ell}_{m_{\ell}-1}}\int^{nb_{\ell+1}}_{na_{\ell+1}}
           |\widehat{f}(x^{\ell+1}_1)-\widehat{f}(0)|^2(s^{\ell}_{m_{\ell}}-s^{\ell}_{m_{\ell}-1})^{-Hd}\\
&\qquad\qquad\times   \exp\Big(-\frac{\kappa_{H,|\mathbf{m}|}}{2} |x^{\ell+1}_1|^2(s^{\ell+1}_1-s^{\ell}_{m_{\ell}})^{2H}\Big) \, ds^{\ell+1}_1\, ds^{\ell}_{m_{\ell}}\, dx^{\ell+1}_1.
\end{align*}
Note that $|\widehat{f}(x)-\widehat{f}(0)|\leq c_{\beta}(|x|^{\beta}\wedge 1)$. Then
\begin{align*}
\text{II}_n &\leq c_3\, \left(\frac{\ell_{n,H, d}}{n}\right)^2 n^{1-Hd}  \int_{\R^{d}}   \int^{nb_{\ell+1}}_{n a_{\ell+1}}
           |x^{\ell+1}_1|^{2\beta} \exp\Big(-\frac{\kappa_{H,|\mathbf{m}|}}{2} |x^{\ell+1}_1|^2(s^{\ell+1}_1-nb_{\ell})^{2H}\Big) \, ds^{\ell+1}_1\,  dx^{\ell+1}_1  \nonumber \\  
&\leq c_4\,  \left(\frac{\ell_{n,H, d}}{n}\right)^2 n^{1-Hd}    \int^{nb_{\ell+1}}_{n a_{\ell+1}} (s^{\ell+1}_1-nb_{\ell})^{-Hd-2H\beta} \, ds^{\ell+1}_1.         
\end{align*}
Recall that $b_{\ell}<a_{\ell+1}$ and the definition of $\ell_{n,H, d}$ in (\ref{switch}). Then
\begin{align}  \label{iin}
\text{II}_n &\leq c_5
\begin{cases}
n^{1-Hd-2H\beta} &  \text{if}\quad   1-Hd<2H\beta;  \\  \\
\ln^{-1} n&  \text{if}\quad   1-Hd=2H\beta. 
\end{cases}
\end{align}
Therefore, $\lim\limits_{n\to\infty}\E (G_n)=0$. This completes the proof.
\end{proof}

\bigskip
 Consider now  the convergence of moments when all exponents $m_i$ are even.  
 \begin{proposition} \label{even1} Suppose that $\frac{1}{2\beta+d}<H<\frac{1}{d}$ for some $\beta\in (0,2]$ and all exponents $m_i$ are even. Then
 \begin{equation}  \label{c1}
\lim_{n\to\infty}\E(G_n)=C_{H,d,f}^{\frac{|\mathbf{m}|}{2}}\, \mathbb{E} \Big(
\prod_{i=1}^N   \big( W(L_{b_i}(\lambda))- W(L_{a_i}(\lambda)) \big)^{m_i}
\Big).
\end{equation}
\end{proposition}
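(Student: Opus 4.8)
The plan is to evaluate the limit of the expression produced in the proof of Proposition~\ref{odd}. Since all $m_i$ are even, $|\mathbf m|$ is even, so $I_{|\mathbf m|}=1$ there and, after applying Proposition~\ref{chain1},
\[
\lim_{n\to\infty}\E(G_n)=\frac{\mathbf m!}{(2\pi)^{|\mathbf m|d}}\lim_{n\to\infty}\Big(\tfrac{\ell_{n,H,d}}{n}\Big)^{|\mathbf m|}\int_{\R^{|\mathbf m|d}}\int_{D_\mathbf m}\Big(\prod_{(i,j)\in J_e}|\widehat f(x^i_j)-\widehat f(0)|^2\Big)\,e^{-\iota n^H\lambda\cdot x^1_1-\frac12\Var\big(\sum_{(i,j)} x^i_j\cdot(X_{s^i_j}-X_{s^i_{j-1}})\big)}\,ds\,dx,
\]
with $\ell_{n,H,d}=n^{(Hd+1)/2}$ by \eqref{switch}. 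Because all $m_i$ are even, the surviving squared factors sit at the even positions $j=2,4,\dots,m_i$ within each block, so they naturally pair the time points $s^i_{2\ell-1}<s^i_{2\ell}$ and the spatial variables $x^i_{2\ell}$; the whole problem is to compute the above limit.

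First I would split $D_\mathbf m$ according to the relative sizes of the consecutive gaps $s^i_j-s^i_{j-1}$. For small $\delta>0$, let $R_n^\delta$ be the region in which every ``even gap'' $r^i_\ell:=s^i_{2\ell}-s^i_{2\ell-1}$ is at most $\delta$ times each neighbouring gap and at most $\delta n$. On the complement of $R_n^\delta$ the bound $|\widehat f(x)-\widehat f(0)|\le c(|x|^\beta\wedge1)$ from Lemma~\ref{f}, the lower bound \eqref{lndp}, and the integral estimates of Lemma~\ref{int} --- used just as in the proof of Proposition~\ref{tight} --- bound the contribution by a quantity $\omega(\delta)$, uniformly in $n$, with $\omega(\delta)\to0$. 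On $R_n^\delta$, Assumption~$({\bf B})$ (cases (i)--(iii)) makes each small increment $X_{s^i_{2\ell}}-X_{s^i_{2\ell-1}}$ asymptotically uncorrelated with all other increments, so $\Var\big(\sum_{(i,j)} x^i_j\cdot(X_{s^i_j}-X_{s^i_{j-1}})\big)$ splits, in the limit, into the block-diagonal sum $\sum_{i,\ell}\Var\big(x^i_{2\ell}\cdot(X_{s^i_{2\ell}}-X_{s^i_{2\ell-1}})\big)$ plus the ``macroscopic'' quadratic form carried by the odd increments; and by Assumption~$({\bf A})$, $\Var\big(x^i_{2\ell}\cdot(X_{s^i_{2\ell}}-X_{s^i_{2\ell-1}})\big)=(\sigma+o(1))|x^i_{2\ell}|^2(r^i_\ell)^{2H}$. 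The errors generated by $({\bf A})$ and $({\bf B})$ are handled by dominated convergence, \eqref{lndp} providing an $n$-free integrable majorant after the substitutions of the next step.

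Then, in the main term, I would integrate each even spatial variable $x^i_{2\ell}$ together with its gap $r^i_\ell$: each such pair produces
\[
\int_0^{\delta n}\int_{\R^d}|\widehat f(y)-\widehat f(0)|^2e^{-\frac{\sigma}{2}|y|^2r^{2H}}\,dy\,dr\ \longrightarrow\ \int_0^{\infty}\int_{\R^d}|\widehat f(y)-\widehat f(0)|^2e^{-\frac{\sigma}{2}|y|^2r^{2H}}\,dy\,dr=\tfrac{(2\pi)^d}{2}\,C_{H,d,f}\qquad(n\to\infty),
\]
the integral being finite precisely because $Hd<1$ (convergence at $r=0$) and $H>\frac1{2\beta+d}$, i.e.\ $Hd+2H\beta>1$ (convergence at $r=\infty$, via Lemma~\ref{f}); there are $|\mathbf m|/2$ such pairs, and the resulting limit does not depend on $\delta$. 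For the remaining $|\mathbf m|/2$ spatial variables and $|\mathbf m|/2$ time variables I would rescale $s^i_j=nt^i_j$ and, by self-similarity, rescale the remaining spatial variables by $n^{-H}$; a short count gives $(\ell_{n,H,d}/n)^{|\mathbf m|}\,n^{|\mathbf m|/2}\,n^{-Hd|\mathbf m|/2}=1$, the factor $e^{-\iota n^H\lambda\cdot x^1_1}$ becomes $e^{-\iota\lambda\cdot z^1_1}$, and the macroscopic form becomes $\Var$ of a linear combination of $X$ at points $t^i_\ell\in(a_i,b_i)$, the collapsed even gaps having turned the surviving time variables into genuine points of $(a_i,b_i)$. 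Undoing the Abel-summation change of variables and matching the constants $\mathbf m!$, the powers of $2$ and $2\pi$, and the ordering factors $(m_i/2)!$, the limit becomes exactly $C_{H,d,f}^{|\mathbf m|/2}$ times the integral representation of $\E\prod_i\big(W(L_{b_i}(\lambda))-W(L_{a_i}(\lambda))\big)^{m_i}$ from Lemma~\ref{lema2}, which is \eqref{c1}.

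The step I expect to be the main obstacle is the simplex decomposition: showing rigorously that the contribution off $R_n^\delta$ is $o(1)$ as $\delta\to0$ uniformly in $n$, and that on $R_n^\delta$ the Gaussian quadratic form converges to its block-diagonal limit in a dominated fashion. This is where Assumption~$({\bf B})$, the lower bound \eqref{lndp}, and the estimates behind Proposition~\ref{tight} have to be used together; everything afterwards is changes of variables and routine bookkeeping of constants.
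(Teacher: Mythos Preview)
Your strategy matches the paper's: reduce via Proposition~\ref{chain1} to the $J_e$-squared integrand, carve out a ``good'' time region on which even gaps are short relative to odd gaps so that Assumptions $({\bf A})$--$({\bf B})$ force the block-diagonal variance decomposition, factor out the $C_{H,d,f}$ integrals from the even pairs, and identify the remaining odd-variable integral with Lemma~\ref{lema2} after rescaling by self-similarity. All the key pieces are present and correctly ordered.

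The one substantive difference is the choice of good region. Your $R_n^\delta$ imposes the \emph{ratio} condition ``even gap $\le\delta\times$(neighbouring odd gap)'', which (i) couples the even- and odd-gap integrations, so the factorisation $\int_0^{\delta n}\!\int_{\R^d}|\widehat f(y)-\widehat f(0)|^2e^{-\frac{\sigma}{2}|y|^2r^{2H}}\,dy\,dr$ is not immediate, and (ii) for fixed $\delta$ leaves the $({\bf A})$--$({\bf B})$ errors at size $\phi(\delta)+\psi(1/\delta)$ rather than $o(1)$ in $n$, forcing a double limit $n\to\infty$ then $\delta\to0$; your ``uniformly in $n$'' phrasing for the complement is then not quite what is needed (in fact the complement of your $R_n^\delta$ already tends to $0$ with $n$ for each fixed $\delta$, so the real residual error lives on $R_n^\delta$ itself). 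The paper instead takes the good region to be
\[
\bigl\{\text{every even gap }\le n^{1/4}\ \text{ and every odd gap }\ge n^{1/2}\bigr\}.
\]
These two thresholds are \emph{independent} of each other, so the even-gap integrals decouple from the odd-gap integrals without extra work, and the ratio $n^{1/4}/n^{1/2}=n^{-1/4}\to0$ drives both $\phi$ and $\psi$ to zero in the single limit $n\to\infty$; dominated convergence (with \eqref{lndp} supplying the majorant) then applies directly. Your $\delta$-version can be made rigorous, but the $n$-dependent thresholds eliminate precisely the obstacle you anticipated in your final paragraph.
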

\begin{proof} Recall the definition of $D_\mathbf{m}$ in \eref{e.3.1.4}. Let 
\begin{equation}
D_{\mathbf{m},n}=D_\mathbf{m}\bigcap \Big(\bigcup^N_{i=1}\bigcup^{m_i/2}_{\ell=1}\big\{s \in \R^{|\mathbf{m}|}: s^i_{2\ell} -s^i_{2\ell-1}>n^{\frac{1}{4}}\,\text{or}\, s^i_{2\ell-1} -s^i_{2\ell-2}<n^{\frac{1}{2}}\big\}\Big),
\end{equation}
where $s^i_0=s^{i-1}_{m_{i-1}}$ for $i=2,\dots, N$ and $s^1_0=0$.

Note that \begin{align*}
\lim_{n\to\infty}\E (G_n)
&=\frac{\mathbf{m}!}{(2\pi)^{|\mathbf{m}|d}}\,  \lim_{n\to\infty} n^{\frac{|\mathbf{m}|(Hd-1)}{2}}  \int_{\R^{|\mathbf{m}|d}}  \int_{D_\mathbf{m}} 
           \Big( \prod_{(i,j)\in J_e} |\widehat{f}(x^i_j)-\widehat{f}(0)|^2\Big)\\
&\qquad \times \exp\bigg(-\iota n^H\lambda\cdot x^1_1-\frac{1}{2}\Var\Big( \sum^{N}_{i=1}\sum^{m_i}_{j=1} x^i_j\cdot \big(X_{s^i_j}-X_{s^i_{j-1}}\big) \Big)\bigg) \, ds\, dx\\
&:=\lim_{n\to\infty} I_{\mathbf{m}, n}.
\end{align*}

\medskip
\noindent
{\bf Step 1} We show that
\begin{align} \label{K1}
\limsup_{n\to\infty}\big| I_{\mathbf{m}, n,1}\big|=0,
\end{align}
where
\begin{align*}
I_{\mathbf{m}, n, 1}
&=\frac{\mathbf{m}!}{(2\pi)^{|\mathbf{m}|d}}\, n^{\frac{|\mathbf{m}|(Hd-1)}{2}}  \int_{\R^{|\mathbf{m}|d}}  \int_{D_{\mathbf{m}, n}} 
           \Big( \prod_{(i,j)\in J_e} |\widehat{f}(x^i_j)-\widehat{f}(0)|^2\Big)\\
&\qquad\qquad\times \exp\bigg(-\iota n^H\lambda\cdot x^1_1-\frac{1}{2}\Var\Big( \sum^{N}_{i=1}\sum^{m_i}_{j=1} x^i_j\cdot \big(X_{s^i_j}-X_{s^i_{j-1}}\big) \Big)\bigg) \, ds\, dx.
\end{align*}
  
In order to prove (\ref{K1}), set  $|\mathbf{m}|=2m$.  For $\ell=1,\dots, m$,  we introduce the set 
\[
D_{2m,n}^\ell= \Big\{(s_1, \dots, s_{|\mathbf{m}|}) \in D_{\mathbf{m}}:\, s_1<s_2<\cdots<s_{|\mathbf{m}|},\, s_{2\ell} -s_{2\ell-1} > n^{\frac{1}{4}}\; \text{or}\; s_{2\ell-1}-s_{2\ell-2}<n^{\frac{1}{2}}\Big\},
\] 
where $s_0=0$.

Then, it suffices to show that
\begin{align} \label{dl2mn}
&\lim_{n\to\infty}\, n^{m(Hd-1)}\int_{\R^{2md}}\int_{D^{\ell}_{2m,n}} \Big(\prod^{m}_{i=1}| \widehat{f}(x_{2i})-\widehat{f}(0)|^2\Big) \nonumber \\
&\qquad\qquad\times \exp\Big(-\frac{1}{2}\Var\big(\sum\limits^{2m}_{i=1} x_i\cdot (X_{s_i}-X_{s_{i-1}})\big)\Big)\, ds\, dx=0
\end{align}
for each $\ell=1,\dots,m$.

The left hand side of (\ref{dl2mn}) is positive and less  than or equal to
\[
\lim_{n\to\infty}\, n^{m(Hd-1)}\int_{\R^{2md}}\int_{D^{\ell}_{2m,n}} \Big(\prod^{m}_{i=1} |\widehat{f}(x_{2i})-\widehat{f}(0)|^2\Big)\, \exp\Big(-\frac{\kappa_{H,2m}}{2}\sum\limits^{2m}_{i=1} |x_i|^2(s_i-s_{i-1})^{2H}\Big)\, ds\, dz=0,
\]
where we use the definition of $D^{\ell}_{2m,n}$, $1-Hd-2H\beta<0$ and Lemma \ref{f} in the last equality.

\medskip
\noindent
{\bf Step 2} On the set $D_{\mathbf{m}}-D_{\mathbf{m},n}$, by Assumptions ({\bf A})\&({\bf B}), we could obtain that, for $n$ large enough,
\begin{align*}
&\Var\Big( \sum^{N}_{i=1}\sum^{m_i}_{j=1} x^i_j\cdot \big(X_{s^i_j}-X_{s^i_{j-1}}\big) \Big)\\
&=(\sigma+o(1))\sum^{N}_{i=1}\sum^{m_i/2}_{\ell=1} |x^i_{2\ell}|^2 (s^i_{2\ell}-s^i_{2\ell-1})^{2H}+(1+o(1))\Var\Big( \sum^{N}_{i=1}\sum^{m_i/2}_{\ell=1} x^i_{2\ell-1}\cdot \big(X_{s^i_{2\ell-1}}-X_{s^i_{2\ell-2}}\big) \Big)\\
&=(\sigma+o(1))\sum^{N}_{i=1}\sum^{m_i/2}_{\ell=1} |x^i_{2\ell}|^2 (s^i_{2\ell}-s^i_{2\ell-1})^{2H}+(1+o(1))\Var\Big( \sum^{N}_{i=1}\sum^{m_i/2}_{\ell=1} x^i_{2\ell-1}\cdot \big(X_{s^i_{2\ell-1}}-X_{s^i_{2\ell-3}}\big) \Big),
\end{align*}
where $s^i_{-1}=s^{i-1}_{m_{i-1}-1}$ for $i=2,\dots, N$ and $s^1_{-1}=0$.

Therefore, by the dominated convergence theorem, the self-similarity of the Gaussian process $X$ and Lemma \ref{lema2}, we could obtain that 
\begin{align*}
&\lim_{n\to\infty} (I_{\mathbf{m}, n}-I_{\mathbf{m}, n,1})\\
&=\ \left(\frac{2}{(2\pi)^d} \int_{\R^d} \int^{\infty}_0 |\widehat{f}(y)-\widehat{f}(0)|^2 e^{-\frac{\sigma}{2}|y|^2u^{2H}}\, du\, dy \right)^{\frac{|\mathbf{m}|}{2}}\frac{\mathbf{m}!}{2^{\frac{|\mathbf{m}|}{2}}(2\pi)^{\frac{|\mathbf{m}|d}{2}}}\\
&\qquad\times  \int_{\R^{\frac{|\mathbf{m}|d}{2}}}\int_{\prod\limits^{N}_{i=1} [a_i,b_i]^{\frac{m_i}{2}}_<} \exp\bigg(-\iota \lambda\cdot x^1_1-\frac{1}{2}\Var\Big( \sum^{N}_{i=1}\sum^{m_i/2}_{\ell=1} x^i_{2\ell-1}\cdot \big(X_{s^i_{2\ell-1}}-X_{s^i_{2\ell-3}}\big) \Big)\bigg) \, ds\, dx\\
&=C_{H,d,f}^{\frac{|\mathbf{m}|}{2}}\, \mathbb{E} \Big(\prod_{i=1}^N   \big( W(L_{b_i}(\lambda))- W(L_{a_i}(\lambda)) \big)^{m_i}\Big),
\end{align*}
where $[a_i,b_i]^{\frac{m_i}{2}}_<=\{a_i<s^i_1<s^i_3\dots<s^i_{m_i-1}<b_i\}$ for $i=1,\dots, N$.

\medskip
\noindent
{\bf Step 3} Combing the results in {\bf Step 1} and {\bf Step 2} gives the desired convergence of moments when all exponents $m_i$ are even.  
\end{proof}

\begin{proposition} \label{even2} Suppose that $\frac{1}{2\beta+d}=H$ for some $\beta\in \{1,2\}$ and all exponents $m_i$ are even. Then
\begin{equation}  \label{c1}
\lim_{n\to\infty}\E(G_n)=D_{H,d,f}^{\frac{|\mathbf{m}|}{2}}\, \mathbb{E} \Big(
\prod_{i=1}^N   \big( W(L_{b_i}(\lambda))- W(L_{a_i}(\lambda)) \big)^{m_i}
\Big).
\end{equation}
\end{proposition}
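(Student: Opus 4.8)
The plan is to imitate the three-step proof of Proposition \ref{even1}, adapted to the normalization $\ell_{n,H,d}=(\ln n)^{-1/2}n^{(Hd+1)/2}$ and to the criticality $1-Hd=2H\beta$ (which holds here since $H=\tfrac1{2\beta+d}$). First I would reduce $\mathbb{E}(G_n)$ to a decoupled integral: because $H=\tfrac1{2\beta+d}$ exactly, Proposition \ref{chain1} is vacuous and only Proposition \ref{chain2} is available, and applying it successively to the $|\mathbf m|=:2m$ variables ordered by the dictionary order on $J_0$ (in its multi-interval form, obtained exactly as in the proofs of Propositions \ref{odd} and \ref{even1}) one gets $\mathbb{E}(G_n)-I_{\mathbf m,n}\to0$ with
\[
I_{\mathbf m,n}=\frac{\mathbf m!}{(2\pi)^{|\mathbf m|d}}(\ln n)^{-m}\,n^{m(Hd-1)}\int_{\R^{|\mathbf m|d}}\int_{D_{\mathbf m}}\Big(\prod_{(i,j)\in J_e}|\widehat f(x^i_j)-\widehat f(0)|^2\Big)\exp\Big(-\iota n^H\lambda\cdot x^1_1-\tfrac12\Var\big(\textstyle\sum_{i,j}x^i_j\cdot(X_{s^i_j}-X_{s^i_{j-1}})\big)\Big)\,ds\,dx,
\]
so everything reduces to computing $\lim_n I_{\mathbf m,n}$. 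Since all $m_i$ are even, $J_e=\{(i,j):j\text{ even}\}$, so the factor $|\widehat f(x^i_{2\ell})-\widehat f(0)|^2$ is attached to the ``within-pair'' gap $g^i_\ell:=s^i_{2\ell}-s^i_{2\ell-1}$.

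Next I would split $D_{\mathbf m}=(D_{\mathbf m}\setminus D_{\mathbf m,n})\cup D_{\mathbf m,n}$. The essential new feature is that the bad set $D_{\mathbf m,n}$ cannot be chosen as in Proposition \ref{even1}: in the critical case each within-pair gap must be allowed to grow up to order $n^{1-o(1)}$, because only then does the $x^i_{2\ell}$-integral produce a \emph{full} $\ln n$. I would therefore take $D_{\mathbf m,n}$ to be the set of configurations in $D_{\mathbf m}$ on which, for some $i,\ell$: some between-pair gap $s^i_{2\ell-1}-s^i_{2\ell-2}$ is $<n^{1/2}$, or $g^i_\ell>n^{1-\varepsilon_n}$ for a fixed sequence $\varepsilon_n\downarrow0$, or two of the gaps feeding the decoupling below fail to lie at well-separated polynomial scales. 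That the contribution of $D_{\mathbf m,n}$ to $I_{\mathbf m,n}$ vanishes should follow by bounding the variance below via the local nondeterminism property \eref{lndp}, using $|\widehat f(x)-\widehat f(0)|\le c_\beta(|x|^\beta\wedge1)$ from Lemma \ref{f} and then Lemma \ref{int} together with $1-Hd=2H\beta$: a small between-pair gap costs a power of $n$ in an ``odd'' integral $\int g^{-Hd}dg$, an oversized within-pair gap costs only a vanishing fraction $\varepsilon_n$ of one $\ln n$, and the scale-coincidence part is handled by the stratification.

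On the good set $D_{\mathbf m}\setminus D_{\mathbf m,n}$, Assumptions $(\mathbf A)$ and $(\mathbf B)$ should give, exactly as in Step 2 of the proof of Proposition \ref{even1}, the decoupling
\[
\Var\Big(\sum_{i,j}x^i_j\cdot(X_{s^i_j}-X_{s^i_{j-1}})\Big)=(\sigma+o(1))\sum_{i,\ell}|x^i_{2\ell}|^2(g^i_\ell)^{2H}+(1+o(1))\Var\Big(\sum_{i,\ell}x^i_{2\ell-1}\cdot(X_{s^i_{2\ell-1}}-X_{s^i_{2\ell-3}})\Big).
\]
Integrating out the $m$ even variables then asymptotically factors off $m$ copies of $\int_0^{n}\int_{\R^d}|\widehat f(y)-\widehat f(0)|^2e^{-\frac\sigma2|y|^2g^{2H}}\,dy\,dg$, which by the L'H\^opital-type computation in the Remark verifying \eref{DHdf} equals $\big(\tfrac{(2\pi)^d}{2}D_{H,d,f}+o(1)\big)\ln n$; these cancel the $(\ln n)^{-m}$ in front and leave the constant $D_{H,d,f}^m$. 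For the remaining odd variables and odd times I would substitute $s^i_{2\ell-1}=nv^i_\ell$, use the self-similarity of $X$ (turning the residual variance into $n^{2H}\Var(\sum x^i_{2\ell-1}\cdot(X_{v^i_\ell}-X_{v^i_{\ell-1}}))$), and note that on the dominant part of the good set the within-pair gaps are $o(n)$, so the $v^i_\ell$ become asymptotically free in $[a_i,b_i]$; the dominated convergence theorem and Lemma \ref{lema2} then identify this piece with $\mathbb{E}\big(\prod_{i=1}^N(W(L_{b_i}(\lambda))-W(L_{a_i}(\lambda)))^{m_i}\big)$, up to the combinatorial constant already present in Lemma \ref{lema2}. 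Collecting constants exactly as in Proposition \ref{even1} then gives \eref{c1}.

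The hard part is the middle step. In the supercritical regime of Proposition \ref{even1} the within-pair gaps were confined to a sublinear window; here they cannot be, and one must simultaneously (i) let them run up to $n^{1-\varepsilon_n}$ so that each $x^i_{2\ell}$-integral harvests the \emph{whole} $\ln n$ — which is precisely what pins the limiting constant to $D_{H,d,f}^m$ rather than to a constant multiple of it — and (ii) prevent the large-gap regime from spoiling either the $(\mathbf A)$--$(\mathbf B)$ decoupling (which needs the gaps entering the cross-covariances at well-separated polynomial scales) or the asymptotic freedom of the odd times. Reconciling (i) and (ii) forces the rate $\varepsilon_n\downarrow0$ together with a scale stratification of $D_{\mathbf m}$, and is where the real work lies.
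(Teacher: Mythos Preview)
Your three-step architecture is the same as the paper's, and the reduction to $I_{\mathbf m,n}$ via Proposition \ref{chain2}, the decoupling via Assumptions $({\bf A})$--$({\bf B})$ on the good set, and the identification of the odd part through self-similarity and Lemma \ref{lema2} are all exactly what the paper does. The one substantive difference is your choice of bad set, and here the paper's choice is strictly simpler than yours.

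You propose the cutoffs ``between-pair gap $<n^{1/2}$'' and ``within-pair gap $>n^{1-\varepsilon_n}$'', and you correctly observe that with these thresholds the within-pair gap can dwarf the between-pair gap on the good set, so Assumption $({\bf B})$ does not apply directly; you therefore add a third ``scale-coincidence'' condition and plan a stratification to handle it. The paper sidesteps this entirely by taking the bad set
\[
\widetilde D_{\mathbf m,n}=D_{\mathbf m}\cap\Big(\bigcup_{i,\ell}\big\{s: s^i_{2\ell}-s^i_{2\ell-1}>\tfrac{n}{\ln^2 n}\ \text{or}\ s^i_{2\ell-1}-s^i_{2\ell-2}<\tfrac{n}{\ln n}\big\}\Big).
\]
Both thresholds are of the form $n^{1-o(1)}$, so the within-pair integral over $[0,n/\ln^2 n]$ still produces $\ln(n/\ln^2 n)\sim\ln n$, exactly the ``full $\ln n$'' you were worried about in point (i). But now on the good set the ratio of any within-pair gap to any neighbouring between-pair gap is at most $(n/\ln^2 n)/(n/\ln n)=1/\ln n\to0$, so the hypothesis of Assumption $({\bf B})$ (case (iii)) is met \emph{automatically}, and no stratification is needed. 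The contribution of $\widetilde D_{\mathbf m,n}$ vanishes by the same local-nondeterminism-plus-Lemma-\ref{f}-plus-Lemma-\ref{int} argument you sketch: a within-pair gap exceeding $n/\ln^2 n$ kills one of the $\ln n$ factors (it contributes only $\sim\ln\ln n$), and a between-pair gap below $n/\ln n$ costs a factor $(\ln n)^{-(1-Hd)}$ from the $u^{-Hd}$ integral, both of which beat the $(\ln n)^{-m}$ normalization.

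In short: your plan would work, but the ``hard part'' you identify---reconciling (i) and (ii) via a scale stratification---is an artifact of the polynomial cutoff $n^{1/2}$ on the between-pair gaps. Replacing it with the logarithmic cutoff $n/\ln n$ makes that difficulty disappear.
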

\begin{proof} Recall the definition of $D_\mathbf{m}$ in \eref{e.3.1.4}. Let 
\begin{equation}
\widetilde{D}_{\mathbf{m}, n}=D_\mathbf{m}\bigcap \Big(\bigcup^N_{i=1}\bigcup^{m_i/2}_{\ell=1}\big\{s \in \R^{|\mathbf{m}|}: s^i_{2\ell} -s^i_{2\ell-1}>\frac{n}{\ln^2 n}\,\text{or}\, s^i_{2\ell-1} -s^i_{2\ell-2}<\frac{n}{\ln n} \big\}\Big),
\end{equation}
where $s^i_0=s^{i-1}_{m_{i-1}}$ for $i=2,\dots, N$ and $s^1_0=0$.

Note that \begin{align*}
\lim_{n\to\infty}\E (G_n)
&=\frac{\mathbf{m}!}{(2\pi)^{|\mathbf{m}|d}}\,  \lim_{n\to\infty} (\ln n)^{-\frac{|\mathbf{m}|}{2}} n^{\frac{|\mathbf{m}|(Hd-1)}{2}}  \int_{\R^{|\mathbf{m}|d}}  \int_{D_\mathbf{m}} 
           \Big( \prod_{(i,j)\in J_e} |\widehat{f}(x^i_j)-\widehat{f}(0)|^2\Big)\\
&\qquad\qquad\times \exp\bigg(-\iota n^H\lambda\cdot x^1_1-\frac{1}{2}\Var\Big( \sum^{N}_{i=1}\sum^{m_i}_{j=1} x^i_j\cdot \big(X_{s^i_j}-X_{s^i_{j-1}}\big) \Big)\bigg) \, ds\, dx\\
&:=\lim_{n\to\infty} I_{\mathbf{m}, n}.
\end{align*}

\medskip
\noindent
{\bf Step 1} We show that
\begin{align} \label{K2}
\lim_{n\to\infty}\big| I_{\mathbf{m}, n, 2}\big|=0,
\end{align}
where
\begin{align*}
I_{\mathbf{m}, n, 2}
&=\frac{\mathbf{m}!}{(2\pi)^{|\mathbf{m}|d}}\, (\ln n)^{-\frac{|\mathbf{m}|}{2}} n^{\frac{|\mathbf{m}|(Hd-1)}{2}}  \int_{\R^{|\mathbf{m}|d}}  \int_{\widetilde{D}_{\mathbf{m}, n}} 
           \Big( \prod_{(i,j)\in J_e} |\widehat{f}(x^i_j)-\widehat{f}(0)|^2\Big)\\
&\qquad\qquad\times \exp\bigg(-\iota n^H\lambda\cdot x^1_1-\frac{1}{2}\Var\Big( \sum^{N}_{i=1}\sum^{m_i}_{j=1} x^i_j\cdot \big(X_{s^i_j}-X_{s^i_{j-1}}\big) \Big)\bigg) \, ds\, dx.
\end{align*}
  
In order to prove (\ref{K2}), set  $|\mathbf{m}|=2m$.  For $\ell=1,\dots, m$,  we introduce the set 
\[
\widetilde{D}_{2m,n}^\ell= \Big\{(s_1, \dots, s_{|\mathbf{m}|}) \in D_{\mathbf{m}}:\, s_1<s_2<\cdots<s_{|\mathbf{m}|},\, s_{2\ell} -s_{2\ell-1} > \frac{n}{\ln^2 n}\; \text{or}\; s_{2\ell-1}-s_{2\ell-2}<\frac{n}{\ln n}\Big\},
\] 
where $s_0=0$.

Then, it suffices to show that
\begin{align} \label{D2mn}
&\lim_{n\to\infty}\, (\ln n)^{-m}\, n^{m(Hd-1)}\int_{\R^{2md}}\int_{\widetilde{D}^{\ell}_{2m,n}} \Big(\prod^{m}_{i=1}| \widehat{f}(x_{2i})-\widehat{f}(0)|^2\Big) \nonumber \\
&\qquad\qquad\qquad\qquad\qquad \times \exp\Big(-\frac{1}{2}\Var\big(\sum\limits^{2m}_{i=1} x_i\cdot (X_{s_i}-X_{s_{i-1}})\big)\Big)\, ds\, dx=0
\end{align}
for each $\ell=1,\dots,m$.

The left hand side of (\ref{D2mn}) is positive and less  than or equal to
\begin{align*}
&\lim_{n\to\infty}\, (\ln n)^{-m}\,  n^{m(Hd-1)}\int_{\R^{2md}}\int_{\widetilde{D}^{\ell}_{2m,n}} \Big(\prod^{m}_{j=1} |\widehat{f}(z_{2j})-\widehat{f}(0)|^2\Big)\\
&\qquad\qquad\qquad\qquad \times \exp\Big(-\frac{\kappa_{H,2m}}{2}\sum\limits^{2m}_{i=1} |z_i|^2(s_i-s_{i-1})^{2H}\Big)\, ds\, dz=0,
\end{align*}
where we use the definition of $\widetilde{D}^{\ell}_{2m,n}$, $1-Hd=2\beta$, and Lemma \ref{f} in the last equality.

\medskip
\noindent
{\bf Step 2} On the set $D_{\mathbf{m}}-\widetilde{D}_{\mathbf{m}, n}$, by Assumptions ({\bf A})\&({\bf B}), we could obtain that, for $n$ large enough,
\begin{align*}
&\Var\Big( \sum^{N}_{i=1}\sum^{m_i}_{j=1} x^i_j\cdot \big(X_{s^i_j}-X_{s^i_{j-1}}\big) \Big)\\
&=(\sigma+o(1))\sum^{N}_{i=1}\sum^{m_i/2}_{\ell=1} |x^i_{2\ell}|^2 (s^i_{2\ell}-s^i_{2\ell-1})^{2H}+(1+o(1))\Var\Big( \sum^{N}_{i=1}\sum^{m_i/2}_{\ell=1} x^i_{2\ell-1}\cdot \big(X_{s^i_{2\ell-1}}-X_{s^i_{2\ell-2}}\big) \Big)\\
&=(\sigma+o(1))\sum^{N}_{i=1}\sum^{m_i/2}_{\ell=1} |x^i_{2\ell}|^2 (s^i_{2\ell}-s^i_{2\ell-1})^{2H}+(1+o(1))\Var\Big( \sum^{N}_{i=1}\sum^{m_i/2}_{\ell=1} x^i_{2\ell-1}\cdot \big(X_{s^i_{2\ell-1}}-X_{s^i_{2\ell-3}}\big) \Big).
\end{align*}

Therefore,  by the dominated convergence theorem, the self-similarity of the Gaussian process $X$ and Lemma \ref{lema2}, we could obtain that 
\begin{align*}
&\lim_{n\to\infty} (I_{\mathbf{m}, n}-I_{\mathbf{m}, n, 2})\\
&=\left(\frac{2}{(2\pi)^d} \lim_{n\to \infty} \frac{1}{\ln n}\int^n_0 \int_{\R^d} |\widehat{f}(y)-\widehat{f}(0)|^2 e^{-\frac{\sigma}{2}|y|^2 u^{2H}}\, dy\, du \right)^{\frac{|\mathbf{m}|}{2}}\frac{\mathbf{m}!}{2^{\frac{|\mathbf{m}|}{2}}(2\pi)^{\frac{|\mathbf{m}|d}{2}}}\\
&\qquad\times  \int_{\R^{\frac{|\mathbf{m}|d}{2}}}\int_{\prod\limits^{N}_{i=1} [a_i,b_i]^{\frac{m_i}{2}}_<} \exp\bigg(-\iota \lambda\cdot x^1_1-\frac{1}{2}\Var\Big( \sum^{N}_{i=1}\sum^{m_i/2}_{\ell=1} x^i_{2\ell-1}\cdot \big(X_{s^i_{2\ell-1}}-X_{s^i_{2\ell-3}}\big) \Big)\bigg) \, ds\, dx\\
&=D_{H,d,f}^{\frac{|\mathbf{m}|}{2}}\, \mathbb{E} \Big(\prod_{i=1}^N   \big( W(L_{b_i}(\lambda))- W(L_{a_i}(\lambda)) \big)^{m_i}\Big),
\end{align*}
where $[a_i,b_i]^{\frac{m_i}{2}}_<=\{a_i<s^i_1<s^i_3\dots<s^i_{m_i-1}<b_i\}$ for $i=1,\dots, N$.

\medskip
\noindent
{\bf Step 3} Combing the results in {\bf Step 1} and {\bf Step 2} gives the desired convergence of moments when all exponents $m_i$ are even.  
\end{proof}

\medskip 
 
{\medskip \noindent \textbf{Proof of Theorem \ref{th1}.}}    This follows
from Lemma \ref{lema2}, Propositions \ref{tight}, \ref{odd} and \ref{even1} by the method of moments.

\medskip

{\medskip \noindent \textbf{Proof of Theorem \ref{th2}.}}    This follows
from Lemma \ref{lema2}, Propositions \ref{tight}, \ref{odd} and \ref{even2} by the method of moments.
 
\medskip

{\medskip \noindent \textbf{Proof of Theorem \ref{th3}.}}  By Remark 1.4 in \cite{hx} and Lemma \ref{lndp}, it is easy to see that $L^{({\beta \bf e}_i)}_t(\lambda)$ exists in $L^p$ for any $p\geq 1$. For $\beta=1,2$, we define 
\[
A^{\beta}_{n,t}=n^{H\beta} \Big(n^H\int^t_0 f(n^H(X_s-\lambda))\, ds-L_t(\lambda)\int_{\mathbb{R}^d} f(x)\, dx\Big)-\frac{1}{\beta}\int_{\mathbb{R}^d} \Big(\sum^d_{i=1} x^{\beta}_i L^{({\beta \bf e}_i)}_t(\lambda) \Big) f(x)\, dx.
\]

\medskip
\noindent
{\bf Step 1} We first consider the case $\beta=1$. For any $m\in \mathbb{N}$, we have
\begin{align*}
\mathbb{E}|A^{1}_{n,t}|^{2m}
&=\frac{1}{(2\pi)^{2md}}\int_{[0,t]^{2m}}\int_{\mathbb{R}^{2md}}\prod^{2m}_{j=1}\Big[n^{H} \big(\widehat{f}(\frac{u^j}{n^H})-\widehat{f}(0)\big)-\iota \sum^d_{i=1}\partial_i\widehat{f}(0) u^j_i \Big] \\
&\qquad\qquad\times \exp\left(-\frac{1}{2}\Var\Big(\sum^{2m}_{j=1} u^j\cdot(X_{s_j}-\lambda)\Big) \right)\, du\, ds,
\end{align*}
where $\partial_i\widehat{f}(x)=\frac{\partial }{\partial x_i}\widehat{f}(x)$ for any $x=(x_1,\dots, x_d)\in\mathbb{R}^d$.

Note that 
\begin{align*}
\Big|n^{H} \big(\widehat{f}(\frac{u^j}{n^H})-\widehat{f}(0)\big)-\iota \sum^d_{i=1}\partial_i\widehat{f}(0) u^j_i\Big|\leq c_1|u^j|
\end{align*}
for all  $u^j=(u^j_1,\dots, u^j_d)\in\mathbb{R}^d$.  

Moreover, using the local nondeterminism property (\ref{lndp}) and making the change of variables $v^j=\sum\limits^{2m}_{\ell=j} u^{\ell}$, we could easily obtain that 
\begin{align*}
&\int_{[0,t]^{2m}}\int_{\mathbb{R}^{2md}}\prod^{2m}_{j=1}|u^j| \exp\left(-\frac{1}{2}\Var\Big(\sum^{2m}_{j=1} u^j\cdot(X_{s_j}-\lambda)\Big) \right)\, du\, ds\\
&\leq c_2 \int_{[0,t]^{2m}_{<}}\int_{\mathbb{R}^{2md}}\prod^{2m}_{j=1}|v^j-v^{j+1}| \exp\left(-\frac{\kappa_{H,2m}}{2}\sum^{2m}_{j=1} |v^j|^2(s_j-s_{j-1})^{2H}\right)\, dv\, ds\\
&<\infty,
\end{align*}
where we use $1-2H-Hd>0$ in the last inequality. 

Therefore, by the dominated convergence theorem, $\lim\limits_{n\to\infty}\mathbb{E}|A^{1}_{n,t}|^{2m}=0$.

\medskip
\noindent
{\bf Step 2} Now we consider the case $\beta=2$. For any $m\in \mathbb{N}$, we have
\begin{align*}
\mathbb{E}|A^{2}_{n,t}|^{2m}
&=\frac{1}{(2\pi)^{2md}}\int_{[0,t]^{2m}}\int_{\mathbb{R}^{2md}}\prod^{2m}_{j=1}\Big[n^{2H} \big(\widehat{f}(\frac{u^j}{n^H})-\widehat{f}(0)\big)+\frac{1}{2}\sum^d_{i,k=1}\partial^2_{i,k}\widehat{f}(0) u^j_i u^j_k \Big] \\
&\qquad\qquad\times \exp\left(-\frac{1}{2}\Var\Big(\sum^{2m}_{j=1} u^j\cdot(X_{s_j}-\lambda)\Big) \right)\, du\, ds,
\end{align*}
where $\partial^2_{i,k}\widehat{f}(x)=\frac{\partial^2 }{\partial x_i \partial x_k}\widehat{f}(x)$ for any $x=(x_1,\dots, x_d)\in\mathbb{R}^d$.

Recall that $\int_{\R^d} xf(x)dx=0$. Hence
\begin{align*}
&\mathbb{E}|A^{2}_{n,t}|^{2m}\\
&=\frac{1}{(2\pi)^{2md}}\int_{[0,t]^{2m}}\int_{\mathbb{R}^{2md}}\prod^{2m}_{j=1}\Big[n^{2H} \big(\widehat{f}(\frac{u^j}{n^H})-\widehat{f}(0)\big)-\iota \sum^d_{i=1}\partial_i\widehat{f}(0) \frac{u^j_i}{n^H} +\frac{1}{2}\sum^d_{i,k=1}\partial^2_{i,k}\widehat{f}(0) u^j_i u^j_k \Big] \\
&\qquad\qquad\times \exp\left(-\frac{1}{2}\Var\Big(\sum^{2m}_{j=1} u^j\cdot(X_{s_j}-\lambda)\Big) \right)\, du\, ds.
\end{align*}
Note that 
\begin{align*}
\left|n^{2H} \big(\widehat{f}(\frac{u^j}{n^H})-\widehat{f}(0)\big)-\iota \sum^d_{i=1}\partial_i\widehat{f}(0) \frac{u^j_i}{n^H} +\frac{1}{2}\sum^d_{i,k=1}\partial^2_{i,k}\widehat{f}(0) u^j_i u^j_k\right|\leq c_3|u^j|^2
\end{align*}
for all $u^j\in\mathbb{R}^d$.

Moreover, using the local nondeterminism property (\ref{lndp}) and making the change of variables $v^j=\sum\limits^{2m}_{\ell=j} u^{\ell}$, we could easily obtain that 
\begin{align*}
&\int_{[0,t]^{2m}}\int_{\mathbb{R}^{2md}}\prod^{2m}_{j=1}|u^j|^2\exp\left(-\frac{1}{2}\Var\Big(\sum^{2m}_{j=1} u^j\cdot(X_{s_j}-\lambda)\Big) \right)\, du\, ds\\
&\leq c_4 \int_{[0,t]^{2m}_{<}}\int_{\mathbb{R}^{2md}}\prod^{2m}_{j=1}|v^j-v^{j+1}|^2 \exp\left(-\frac{\kappa_{H,2m}}{2}\sum^{2m}_{j=1} |v^j|^2(s_j-s_{j-1})^{2H}\right)\, dv\, ds\\
&<\infty,
\end{align*}
where we use $1-4H-Hd>0$ in the last inequality. 

Therefore, by the dominated convergence theorem, $\lim\limits_{n\to\infty}\mathbb{E}|A^{2}_{n,t}|^{2m}=0$. 

\medskip
\noindent
{\bf Step 3}
 Combining the results in {\bf Step 1} and {\bf Step 2} gives the desired $L^p$ convergence. This completes the proof.

\section{Appendix A}

Recall that the components of $X=\{X_t=(X^1_t,\dots, X^d_t),\, t\geq 0\}$ are independent copies a one-dimensional centered Gaussian process. To give sufficient conditions for $X$ to satisfy the strong local nondeterminism property (\ref{slndp}), we only need to consider under what conditions the underlying one-dimensional Gaussian process satisfies the strong local nondeterminism property (\ref{slndp}). Assume that $Z=(Z_t,\, t\geq 0)$ is a one-dimensional centered Gaussian process satisfying the self-similarity with index $H\in (0,1)$.  Let $Y=(Y_t,\, t\in\mathbb{R})$ where $Y_t=e^{-Ht}Z_{e^t}$ for each $t\in\mathbb{R}$.  Then it is easy to see that $Y$ is a one-dimensional centered Gaussian stationary process with covariance function $r(t)=\mathbb{E}[Y_0Y_t]=e^{-Ht}\mathbb{E}[Z_1Z_{e^t}]$ for any $t\in\mathbb{R}$.  Clearly, $r(\cdot)$ is an even function on $\mathbb{R}$. In the sequel, we will give a sufficient condition for the Gaussian process $Z$ to possess the strong local nondeterminism property (\ref{slndp}). 

\begin{theorem} \label{glndp} Suppose that $r(t)$ is a non-negative decreasing function on $[0,\infty)$, $r(\cdot)\in L^1(\mathbb{R})$ and there exists a positive constant $c$ such that 
\[
\int^{\infty}_0 r(t)\cos(t\lambda) dt\geq \frac{c}{(|\lambda|+1)^{1+2H}}
\]
for all $\lambda\in\mathbb{R}$. Then there exists a positive constant $\kappa_{H}$ such that for any integer $m\ge 1$, any times $0=s_0<s_1\leq \dots \leq
s_m<t<\infty$,  
\begin{align} \label{slnpb}
\Var\Big(Z_t| Z_{s_1},\dots, Z_{s_m}\Big)\geq \kappa_{H} \min_{1\leq j\leq m} |t-s_j|^{2H}. 
\end{align} 
\end{theorem}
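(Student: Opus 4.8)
The plan is to use the classical Lamperti transform to transfer the question to the stationary Gaussian process $Y_t=e^{-Ht}Z_{e^t}$, $t\in\mathbb{R}$, and there apply a reproducing–kernel duality argument driven by the hypothesized spectral lower bound. First I would record that, since $r$ is even and $r\in L^1(\mathbb{R})$, the spectral measure of $Y$ is absolutely continuous with bounded continuous density
\[
g(\lambda)=\frac{1}{2\pi}\int_{\mathbb{R}}r(t)e^{-\iota t\lambda}\,dt=\frac{1}{\pi}\int_{0}^{\infty}r(t)\cos(t\lambda)\,dt ,
\]
so the hypothesis becomes $g(\lambda)\ge \frac{c}{\pi}(1+|\lambda|)^{-(1+2H)}$, and $Y$ admits the spectral representation $Y_t=\int_{\mathbb{R}}e^{\iota t\lambda}\,\widehat W(d\lambda)$ with $\mathbb{E}|\widehat W(d\lambda)|^{2}=g(\lambda)\,d\lambda$.

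Next I would reduce (\ref{slnpb}) to a statement about $Y$. Discarding repeated $s_j$'s (which changes neither side of (\ref{slnpb})), put $u=\ln t$ and $u_j=\ln s_j$, so that $u_1<\cdots<u_m<u$ and, by self-similarity, $Z_{s_j}=e^{Hu_j}Y_{u_j}$ and $Z_t=e^{Hu}Y_u$ with deterministic positive prefactors. Hence $\sigma(Z_{s_1},\dots,Z_{s_m})=\sigma(Y_{u_1},\dots,Y_{u_m})$, and stationarity of $Y$ gives
\[
\mathrm{Var}\big(Z_t\,\big|\,Z_{s_1},\dots,Z_{s_m}\big)=e^{2Hu}\,\mathrm{Var}\big(Y_0\,\big|\,Y_{u_1-u},\dots,Y_{u_m-u}\big),
\]
where the conditioning times all have absolute value $\ge \delta:=u-u_m>0$. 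Since $\min_j|t-s_j|=t-s_m=e^{u}(1-e^{-\delta})$ and $1-e^{-x}\le x\wedge1$ for $x\ge0$, we have $\min_j|t-s_j|^{2H}\le e^{2Hu}(\delta\wedge1)^{2H}$; therefore it suffices to prove the stationary estimate
\[
\mathrm{Var}\big(Y_0\,\big|\,Y_{v_1},\dots,Y_{v_m}\big)\ge \kappa\,(\delta\wedge1)^{2H}\qquad\text{whenever }\ \min_{1\le j\le m}|v_j|\ge \delta>0 ,
\]
with $\kappa=\kappa(H,c)>0$; then (\ref{slnpb}) follows with $\kappa_H=\kappa$.

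For the stationary estimate I would use duality. Since $Y$ is Gaussian, $\mathrm{Var}(Y_0\,|\,Y_{v_1},\dots,Y_{v_m})=\inf_{a\in\mathbb{R}^m}\int_{\mathbb{R}}\big|1-\sum_j a_j e^{\iota v_j\lambda}\big|^{2}g(\lambda)\,d\lambda$. Fix a real $\psi\in C_c^{\infty}((-1,1))$ with $\psi(0)=1$; for $\delta\le1$ set $\varphi(\lambda)=\delta\,\widehat\psi(\delta\lambda)$, so that its inverse Fourier transform is $\check\varphi(x)=\psi(x/\delta)$, supported in $(-\delta,\delta)$, with $\check\varphi(0)=1$ (for $\delta\ge1$ use $\delta=1$ instead). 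Because $\check\varphi(v_j)=0$ whenever $|v_j|\ge\delta$, for every $a$ one has $\big|\int_{\mathbb{R}}(1-\sum_j a_j e^{\iota v_j\lambda})\overline{\varphi(\lambda)}\,d\lambda\big|=2\pi|\check\varphi(0)|=2\pi$, so the Cauchy–Schwarz inequality with weight $g$ yields
\[
\int_{\mathbb{R}}\Big|1-\sum_j a_j e^{\iota v_j\lambda}\Big|^{2}g(\lambda)\,d\lambda\ \ge\ \frac{4\pi^{2}}{\int_{\mathbb{R}}|\varphi(\lambda)|^{2}/g(\lambda)\,d\lambda}.
\]
Finally, using $1/g(\lambda)\le\frac{\pi}{c}(1+|\lambda|)^{1+2H}$ and the substitution $\xi=\delta\lambda$, for $\delta\le1$ one gets
\[
\int_{\mathbb{R}}\frac{|\varphi(\lambda)|^{2}}{g(\lambda)}\,d\lambda\ \le\ \frac{\pi}{c}\,\delta^{-2H}\int_{\mathbb{R}}|\widehat\psi(\xi)|^{2}(1+|\xi|)^{1+2H}\,d\xi\ =:\ C_\psi\,\delta^{-2H},
\]
the last integral being finite since $\widehat\psi$ is Schwartz. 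Taking the infimum over $a$ gives the stationary estimate with $\kappa=4\pi^{2}/C_\psi$ (the case $\delta\ge1$ being identical with $\delta$ replaced by $1$), and combining with the reduction above completes the proof.

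I expect the only genuinely delicate point to be the choice and scaling of the test function in the duality step: one must arrange that $\check\varphi$ is supported strictly inside $(-\delta,\delta)$ while the conditioning frequencies $v_j$ lie at distance $\ge\delta$ from $0$, and then check that the scaling of $\int_{\mathbb{R}}|\varphi|^{2}/g$ produces exactly the exponent $\delta^{-2H}$ needed to match the target $(\delta\wedge1)^{2H}$. By contrast, the Lamperti reduction, the passage through stationarity, and the elementary inequality $1-e^{-x}\le x\wedge1$ are routine bookkeeping.
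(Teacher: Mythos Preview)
Your proof is correct and follows the same strategy as the paper: reduce via the Lamperti transform $Y_t=e^{-Ht}Z_{e^t}$ to a conditional-variance estimate for the stationary process $Y$ conditioned on times bounded away from $0$, and then exploit the spectral-density lower bound. The only difference is that the paper outsources the stationary step to Lemma~1 of Cuzick--Du~Preez~\cite{cd}, whereas you supply its content explicitly via the duality/test-function argument; your reduction is in fact slightly cleaner, working directly with the finite conditioning set rather than first enlarging to the whole past $\{Z_s:0<s<t-r\}$.
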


\begin{proof}
By assumption, the spectral density $f$ of $Y$ has the expression 
\begin{align*}  
f(\lambda)=\frac{1}{\pi}\int^{\infty}_0 r(t)\cos(t\lambda) dt.
\end{align*}

For any $0<r<t$, by the self-similarity of $Z$,
\begin{align*}
\Var\Big(Z_t| Z_{s}, 0<s<t-r\Big)
&=t^{2H}\Var\Big(Z_1| Z_{s}, 0<s<1-r'\Big)\\
&=t^{2H}\Var\Big(Y_0| Y_{\ln s}, 0<s<1-r'\Big)\\
&\geq t^{2H}\Var\Big(Y_0| Y_u, |u|\geq \frac{r'}{2}\Big),
\end{align*}
where $r'=r/t$ and in the last inequality we use $\ln s<\ln (1-r')<-\frac{r'}{2}$ for $r'\in (0,1)$.

Using similar arguments as in the proof of Lemma 1 in \cite{cd} gives the desired result.
\end{proof}

Next, we will show that bi-fractional Brownian motion $B^{H_0, K_0}$ and sub-fractional Brownian motion $S^{H}$ satisfy the strong local nondeterminism property (\ref{slndp}).

\begin{corollary} \label{bfbm}
Assume that $B^{H_0, K_0}$ is the one-dimensional bi-fractional Brownian motion. Then there exists a positive constant $\kappa_{H_0, K_0}$ such that for any integer $m\ge 1$, any times $0=s_0<s_1\leq \dots \leq
s_m<t<\infty$,  
\begin{align*}  
\Var\Big(B^{H_0, K_0}_t| B^{H_0, K_0}_{s_1},\dots, B^{H_0, K_0}_{s_m}\Big)\geq \kappa_{H_0, K_0} \min_{1\leq j\leq m} |t-s_j|^{2H_0K_0}. 
\end{align*} 
\end{corollary}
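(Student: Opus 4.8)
The plan is to verify the three hypotheses of Theorem~\ref{glndp} for the one-dimensional bi-fractional Brownian motion $B^{H_0,K_0}$, whose covariance is
\[
\E\big[B^{H_0,K_0}_sB^{H_0,K_0}_t\big]=2^{-K_0}\big((s^{2H_0}+t^{2H_0})^{K_0}-|s-t|^{2H_0K_0}\big)
\]
and which is self-similar with index $H:=H_0K_0\in(0,1)$. First I would compute the stationary covariance $r$ attached to it by the Lamperti transform: writing $r(t)=e^{-Ht}\,\E\big[B^{H_0,K_0}_1B^{H_0,K_0}_{e^t}\big]$ for $t\ge0$ and using the elementary identities $e^{-2H_0t}(1+e^{2H_0t})^2=4\cosh^2(H_0t)$ and $e^{-Ht}(e^t-1)^{2H}=(2\sinh(t/2))^{2H}$, one obtains the closed form
\[
r(t)=\cosh^{K_0}(H_0t)-2^{-K_0}\big(2\sinh(t/2)\big)^{2H},\qquad t\ge0 .
\]

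From this the first two hypotheses of Theorem~\ref{glndp} come out quickly. Nonnegativity of $r$ on $[0,\infty)$ reduces to $(e^t-1)^{2H_0}\le 1+e^{2H_0t}$, which holds because $(e^t-1)^{2H_0}<e^{2H_0t}$. For integrability, $r$ is continuous with $r(0)=1$, and after factoring $r(t)=2^{-K_0}e^{Ht}\big[(1+e^{-2H_0t})^{K_0}-(1-e^{-t})^{2H}\big]$ the bracket equals $K_0e^{-2H_0t}+2He^{-t}+O(e^{-ct})$ for a suitable $c>0$, so $r(t)=O(e^{-\delta t})$ with $\delta=\min\{H_0(2-K_0),\,1-H\}>0$, whence $r\in L^1(\R)$. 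Monotonicity of $r$ on $[0,\infty)$ I would get by differentiating $r$ and estimating $r'$ via this factorisation, or simply by quoting the covariance estimates for $B^{H_0,K_0}$ in \cite{sxy}.

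The substantive step is the lower bound $\int_0^\infty r(t)\cos(t\lambda)\,dt\ge c\,(|\lambda|+1)^{-(1+2H)}$. The device I would use is the exact decomposition
\[
r(t)=2^{1-K_0}\rho_H(t)+\Psi(t),\qquad
\rho_H(t)=\cosh(Ht)-\tfrac12\big(2\sinh(t/2)\big)^{2H},\qquad
\Psi(t)=\cosh^{K_0}(H_0t)-2^{1-K_0}\cosh(Ht),
\]
in which the rough $(2\sinh(t/2))^{2H}$-terms cancel exactly: $\rho_H$ is precisely the stationary covariance produced by the same Lamperti transform from fractional Brownian motion with Hurst index $H$, whereas $\Psi$ is real-analytic on $\R$ and, since the leading exponentials also cancel, $\Psi$ together with all of its derivatives decays exponentially, so that repeated integration by parts gives $\big|\int_0^\infty\Psi(t)\cos(t\lambda)\,dt\big|\le C_N(1+|\lambda|)^{-N}$ for every $N$. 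For the fractional Brownian part it is classical (see \cite{cd}) that $\int_0^\infty\rho_H(t)\cos(t\lambda)\,dt=\pi f_H(\lambda)$, where $f_H$ is the continuous, everywhere strictly positive spectral density of the Lamperti transform of fractional Brownian motion with Hurst $H$, and that $f_H(\lambda)\ge c'(1+|\lambda|)^{-(1+2H)}$ for all $\lambda$. Thus $\int_0^\infty r(t)\cos(t\lambda)\,dt=2^{1-K_0}\pi f_H(\lambda)+\int_0^\infty\Psi(t)\cos(t\lambda)\,dt$; taking $N$ larger than $1+2H$, the $\Psi$-term is dominated by half the $f_H$-term once $|\lambda|$ is large, which gives the required bound for $|\lambda|\ge\lambda_0$, while for $|\lambda|\le\lambda_0$ the cosine transform is $\pi$ times the continuous spectral density of the Lamperti transform $Y$ of $B^{H_0,K_0}$, which one checks is strictly positive and therefore bounded below on the compact $[-\lambda_0,\lambda_0]$. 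An application of Theorem~\ref{glndp} then delivers the asserted strong local nondeterminism.

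The hard part will be this last step, and within it the strict positivity of the spectral density of $Y$ on all of $\R$ --- equivalently, positivity of $\int_0^\infty r(t)\cos(t\lambda)\,dt$ over the bounded range of frequencies. The exponential-decay bookkeeping for $\Psi$ and the reduction to the fractional Brownian case are routine, but positivity for small $|\lambda|$ needs its own input: for $H<\tfrac12$ one can run a P\'olya-type convexity argument for $r$, while for general $H$ one argues as for fractional Brownian motion, using a moving-average-type representation of $B^{H_0,K_0}$ to check that the kernel of $Y$ has non-vanishing Fourier transform.
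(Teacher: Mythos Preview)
Your decomposition $r=2^{1-K_0}\rho_H+\Psi$ is correct and gives an elegant route to the large-$|\lambda|$ asymptotic of the cosine transform, different from the paper, which simply cites Proposition~2.1 of \cite{tx} for that asymptotic. Both approaches yield the same conclusion at infinity.

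The gap is in the small-$|\lambda|$ positivity, which you flag as the ``hard part'' and for which you propose either a convexity argument (only for $H<\tfrac12$) or an unspecified moving-average representation of $B^{H_0,K_0}$. Neither is needed. The paper's observation is that you have already done the work: once $r$ is shown to be \emph{strictly positive and strictly decreasing} on $[0,\infty)$ (which you verify as a hypothesis of Theorem~\ref{glndp}), positivity of $\int_0^\infty r(t)\cos(t\lambda)\,dt$ for every $\lambda$ is an elementary alternating-series argument: on each half-period of $\cos(t\lambda)$ the mass of $r$ against $|\cos(t\lambda)|$ is strictly decreasing, so the positive half-period contributions dominate the adjacent negative ones. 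This holds for all $H\in(0,1)$ with no case split and no representation theory. Combining this everywhere-positivity with continuity and the large-$|\lambda|$ asymptotic (from \cite{tx} or from your decomposition) immediately gives the uniform lower bound $c(|\lambda|+1)^{-(1+2H)}$.

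So your plan is sound apart from overcomplicating the final step; replace the last paragraph by the monotonicity observation above (and carry out the computation of $r'(t)<0$ rather than deferring it to \cite{sxy}, which does not contain it in this form).
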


\begin{proof}
Note that $B^{H_0, K_0}$ is a one-dimensional centered Gaussian process satisfying the self-similarity with the index $H_0K_0\in (0,1)$ and 
\begin{align*}
r(t):=e^{-H_0K_0t}\mathbb{E}[B^{H_0, K_0}_1 B^{H_0, K_0}_{e^{t}}]=\frac{e^{H_0K_0 t}}{2^{K_0}}[(1+e^{-2H_0 t})^{K_0}-|1-e^{-t}|^{2H_0K_0}].
\end{align*}
By the proof of Proposition 2.1 in \cite{tx}, $r(\cdot )\in L^1(\mathbb{R})$ and 
\begin{align} \label{boundb}
\lim\limits_{\lambda\to \infty} \frac{\int^{\infty}_0 r(t)\cos(t\lambda) dt}{|\lambda|^{-1-2H_0K_0}}>0.
\end{align}

In the sequel, we will show that $r(t)$ is a positive and strictly decreasing function on $[0,\infty)$. For any $t\geq 0$, it is easy to see that 
\begin{align*}
r(t)>\frac{e^{H_0K_0 t}}{2^{K_0}}[(1+e^{-2H_0 t})^{K_0}-1]>0.
\end{align*}
Moreover, for any $t>0$, 
\begin{align*}
r'(t)
&=\frac{H_0K_0}{2^{K_0}}e^{H_0K_0 t}\Big[(1+e^{-2H_0 t})^{K_0-1}(1-e^{-2H_0t})-(1+e^{-t})(1-e^{-t})^{2H_0K_0-1}\Big]\\
&<\frac{H_0K_0}{2^{K_0}}e^{H_0K_0 t}\Big[(1-(e^{-t})^{2})-(1+e^{-t})(1-e^{-t})^{2H_0K_0-1}\Big]\\
&<\frac{H_0K_0}{2^{K_0}}e^{H_0K_0 t}(1+e^{-t}) \Big[(1-e^{-t})-(1-e^{-t})^{2H_0K_0-1}\Big]\\
&<0,
\end{align*}
where we use $K_0\in (0,1]$ and $H_0\in (0,1)$ in the first inequality,  and $2H_0K_0-1<1$ in the last inequality. 

Note that $\int^{\infty}_0 r(t)\cos(t\lambda) dt$ is even and continuous. According to the graph of the cosine function on $[0,\infty)$ and the fact that $r(t)$ is a positive and strictly decreasing function on $[0,\infty)$, $\int^{\infty}_0 r(t)\cos(t\lambda) dt>0$ for all $\lambda\in\mathbb{R}$. Then, (\ref{boundb}) implies that there exists a positive constant $c$ such that
\begin{align*}
\int^{\infty}_0 r(t)\cos(t\lambda) dt \geq \frac{c}{(|\lambda|+1)^{1+2H_0K_0}}
\end{align*}
for all $\lambda\in\mathbb{R}$. Using Theorem \ref{glndp} gives the desired result.
\end{proof}

\begin{corollary} \label{sfbm}
Assume that $S^H$ is the one-dimensional sub-fractional Brownian motion. Then there exists a positive constant $\kappa_{H}$ such that for any integer $m\ge 1$, any times $0=s_0<s_1\leq \dots \leq
s_m<t<\infty$,  
\begin{align*}  
\Var\Big(S^H_t| S^H_{s_1},\dots, S^H_{s_m}\Big)\geq \kappa_{H}  \min_{1\leq j\leq m} |t-s_j|^{2H}. 
\end{align*} 
\end{corollary}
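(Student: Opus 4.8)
The plan is to reduce the statement to Theorem \ref{glndp}, exactly as in the proof of Corollary \ref{bfbm}. Recall that the one-dimensional sub-fractional Brownian motion is a centered self-similar Gaussian process of index $H\in(0,1)$ with covariance $\E[S^H_sS^H_t]=s^{2H}+t^{2H}-\tfrac12\big[(s+t)^{2H}+|t-s|^{2H}\big]$. First I would pass to the associated stationary Gaussian process $Y_t=e^{-Ht}S^H_{e^t}$ and compute its covariance. Extracting a factor $e^t$ from $1\pm e^t$, a direct computation gives, for $t\ge 0$ and with $u=e^{-t}\in(0,1]$,
\begin{equation*}
r(t)=e^{-Ht}\,\E\big[S^H_1S^H_{e^t}\big]=e^{-Ht}+e^{Ht}\Big(1-\tfrac12\big[(1+u)^{2H}+(1-u)^{2H}\big]\Big).
\end{equation*}
Then, by computations analogous to those recalled in the proof of Corollary \ref{bfbm} (see also \cite{tx}), one checks that $r(\cdot)\in L^1(\mathbb{R})$ and that $\displaystyle\lim_{\lambda\to\infty}|\lambda|^{1+2H}\int_0^\infty r(t)\cos(t\lambda)\,dt>0$; indeed $1-e^{-t}=t+O(t^2)$ shows that the only non-smooth term of $r$ at the origin is $-\tfrac12 t^{2H}(1+o(1))$, and the cosine transform of $t^{2H}$ produces a strictly positive constant multiple of $|\lambda|^{-1-2H}$.

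It remains to verify the two hypotheses of Theorem \ref{glndp}: that $r$ is non-negative and decreasing on $[0,\infty)$, and that $\int_0^\infty r(t)\cos(t\lambda)\,dt\ge c(|\lambda|+1)^{-1-2H}$ for some $c>0$. From the displayed formula, $r(t)\ge 0$ is equivalent to $2u^{2H}+2\ge(1+u)^{2H}+(1-u)^{2H}$, which follows from the concavity of $x\mapsto x^H$ (giving $\tfrac12[(1+u)^{2H}+(1-u)^{2H}]\le(1+u^2)^H$) together with the subadditivity $(1+u^2)^H\le 1+u^{2H}$, with strict inequality for $u\in(0,1]$; moreover $r(0)=2-2^{2H-1}>0$ because $H<1$. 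For monotonicity I would differentiate the displayed expression in $t$; after simplification one is reduced to checking that $2(1-u^{2H})<(1-u)(1+u)^{2H-1}+(1+u)(1-u)^{2H-1}$ for $u\in(0,1)$, which is handled by a short elementary argument, treating the ranges $H\le\tfrac12$ and $H>\tfrac12$ separately because of the sign of $2H-1$, in the spirit of the chain of inequalities in the proof of Corollary \ref{bfbm}.

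Once $r$ is positive and strictly decreasing on $[0,\infty)$, the cosine transform $\int_0^\infty r(t)\cos(t\lambda)\,dt$ is strictly positive for every $\lambda\in\mathbb{R}$: splitting the integral over the successive half-periods of $t\mapsto\cos(t\lambda)$ and pairing each positive hump with the following negative one, the monotonicity of $r$ makes every such pair contribute a non-negative amount while the first hump contributes strictly positively — this is exactly the argument indicated in the proof of Corollary \ref{bfbm}. Combining this positivity on compact $\lambda$-sets with the asymptotics at infinity, and using the continuity and evenness of the transform, yields the lower bound $\int_0^\infty r(t)\cos(t\lambda)\,dt\ge c(|\lambda|+1)^{-1-2H}$ for all $\lambda$. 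An application of Theorem \ref{glndp} then gives \eqref{slnpb} for $Z=S^H$, which is the claim. The main obstacle is the monotonicity of $r$ on all of $[0,\infty)$: unlike the bi-fractional case, the explicit formula for $r$ mixes a term behaving like $\cosh(Ht)$ with one behaving like $(\cosh(t/2))^{2H}+(\sinh(t/2))^{2H}$, so the sign analysis of $r'$ requires some care.
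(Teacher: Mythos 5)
Your proposal follows the paper's proof of Corollary \ref{sfbm} essentially step for step: pass to the stationary process $Y_t=e^{-Ht}S^H_{e^t}$, compute the same covariance $r$, quote the integrability of $r$ and the $|\lambda|^{-1-2H}$ asymptotics of its cosine transform from the literature (the paper cites the proof of Theorem 1 in \cite{l}), prove that $r$ is positive and strictly decreasing on $[0,\infty)$, deduce strict positivity of the cosine transform by the half-period pairing, and invoke Theorem \ref{glndp}. Two remarks. First, your positivity argument (concavity of $x\mapsto x^H$ giving $\tfrac{1}{2}[(1+u)^{2H}+(1-u)^{2H}]\le(1+u^2)^H$, then subadditivity $(1+u^2)^H\le 1+u^{2H}$) is uniform in $H$ and cleaner than the paper's, which splits into the cases $2H\le 1$ and $2H>1$ and, in the second case, needs an auxiliary argument showing that $e^{-2Ht}+1-\tfrac{1}{2}[(1+e^{-t})^{2H}+(1-e^{-t})^{2H}]$ is decreasing with limit $0$. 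Second, the one step you leave open --- the inequality $2(1-u^{2H})<(1-u)(1+u)^{2H-1}+(1+u)(1-u)^{2H-1}$ for $u\in(0,1)$, to which you correctly reduce $r'(t)<0$ --- does hold and requires no case split on $H$: by the AM--GM inequality the right-hand side is at least $2\big[(1-u)(1+u)^{2H-1}\cdot(1+u)(1-u)^{2H-1}\big]^{1/2}=2(1-u^2)^{H}$, with equality only at $u=0$, and $(1-u^2)^H\ge 1-u^{2H}$ because $x^H+(1-x)^H\ge 1$ on $[0,1]$. This is equivalent to the perfect-square identity the paper uses, namely
\[
\tfrac{1}{2}\big[(1-u)(1+u)^{2H-1}+(1+u)(1-u)^{2H-1}\big]-(1-u^2)^H=\tfrac{1}{2}(1-u^2)^{2H-1}\big[(1+u)^{1-H}-(1-u)^{1-H}\big]^2.
\]
So the ``main obstacle'' you flag at the end is in fact a one-line computation, and with that inserted your plan goes through and coincides with the paper's proof.
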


\begin{proof}
Note that $S^H$ is a one-dimensional centered Gaussian process satisfying the self-similarity with the index $H\in (0,1)$ and 
\begin{align*}
r(t)=e^{-Ht}\mathbb{E} [S^H_1S^H_{e^{t}}]=e^{Ht}\Big[e^{-2Ht}+1-\frac{1}{2}[(1+e^{-t})^{2H}+|1-e^{-t}|^{2H}\Big].
\end{align*}
By the proof of Theorem 1 in \cite{l}, $r(\cdot )\in L^1(\mathbb{R})$ and
\begin{align} \label{bounds}
\lim\limits_{\lambda\to \infty} \frac{\int^{\infty}_0 r(t)\cos(t\lambda) dt}{|\lambda|^{-1-2H}}>0.
\end{align}

In the sequel, we will show that $r(t)$ is a positive and strictly decreasing function on $[0,\infty)$. For any $t\geq 0$, if $2H\leq 1$, then it is easy to see that 
\begin{align*}
r(t)
&=e^{Ht}\Big[e^{-2Ht}+1-\frac{1}{2}[(1+e^{-t})^{2H}+(1-e^{-t})^{2H}]\Big]\\
&\geq e^{Ht}\Big[e^{-2Ht}+1-\frac{1}{2}[1+e^{-2Ht}+1]\Big]\\
&=\frac{e^{-Ht}}{2}>0.
\end{align*}

If $2H>1$, let 
\[
r_1(t)=e^{-2Ht}+1-\frac{1}{2}[(1+e^{-t})^{2H}+(1-e^{-t})^{2H}].
\]
Clearly, $r_1(t)$ is continuous on $[0,\infty)$ and $\lim\limits_{t\to\infty}r_1(t)=0$. For any $t>0$,
\begin{align*}
r'_1(t)
&=He^{-t}\Big[(1+e^{-t})^{2H-1}-(1-e^{-t})^{2H-1}-2(e^{-t})^{2H-1}\Big]\\
&\leq He^{-t}\Big[1+(e^{-t})^{2H-1}-(1-e^{-t})^{2H-1}-2(e^{-t})^{2H-1}\Big]\\
&=He^{-t}\Big[1-(1-e^{-t})^{2H-1}-(e^{-t})^{2H-1}\Big]\\
&<0,
\end{align*}
where in the two inequalities we use $2H-1\in (0,1)$ and 
\[
(a+b)^{2H-1}<a^{2H-1}+b^{2H-1}
\]
for all $a,b>0$. Therefore, in the case $2H>1$, $r(t)>0$ for all $t\geq 0$.

Moreover, for any $t>0$, 
\begin{align*}
r'(t)
&=He^{Ht}\Big[1-e^{-2Ht}-\frac{1}{2}(1+e^{-t})^{2H-1}(1-e^{-t})-\frac{1}{2}(1-e^{-t})^{2H-1}(1+e^{-t})\Big]\\
&\leq He^{Ht}\Big[(1-e^{-2t})^H-\frac{1}{2}(1+e^{-t})^{2H-1}(1-e^{-t})-\frac{1}{2}(1-e^{-t})^{2H-1}(1+e^{-t})\Big]\\
&=-\frac{H}{2}e^{Ht}(1-e^{-2t})^{2H-1}\Big[(1+e^{-t})^{1-H}-(1-e^{-t})^{1-H}\Big]^2 \\
&<0.
\end{align*}

Note that $\int^{\infty}_0 r(t)\cos(t\lambda) dt$ is even and continuous.  According to the graph of the cosine function on $[0,\infty)$ and the fact that $r(t)$ is a positive and strictly decreasing function on $[0,\infty)$, $\int^{\infty}_0 r(t)\cos(t\lambda) dt>0$ for all $\lambda\in\mathbb{R}$. Then, (\ref{bounds}) implies that there exists a positive constant $c$ such that
\begin{align*}
\int^{\infty}_0 r(t)\cos(t\lambda) dt \geq \frac{c}{(|\lambda|+1)^{1+2H}}
\end{align*}
for all $\lambda\in\mathbb{R}$. Using Theorem \ref{glndp} gives the desired result.
\end{proof}

\section{Appendix B}

Given $N\in\mathbb{N}$. The following result gives a sufficient condition for a distribution function on $\mathbb{R}^N$ to be determined by its moments. It is an extension of Theorem 3.3.11 in \cite{D} which corresponds to the case $N=1$ here.

\begin{theorem} \label{moment}
If $\limsup\limits_{k\to\infty} \frac{\mu^{\frac{1}{2k}}_{i,2k}}{2k}=r_i<\infty$ for each $i=1,2,\dots,N$, then there is at most one distribution function $F$ on $\mathbb{R}^N$ with $\mu_{i,k}=\int_{\mathbb{R}^N} x^k_i dF(x)$ for all positive integers $k$ and $i=1,2,\dots, N$.
\end{theorem}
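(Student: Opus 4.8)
The plan is to reduce the statement to the classical one--dimensional moment problem (Theorem 3.3.11 in \cite{D}) by means of the Cram\'er--Wold device. As in the application to Lemma \ref{lema2}, the relevant notion here is that the joint law of a random vector $X=(X_1,\dots,X_N)$ is pinned down by the family of its mixed moments $\E[X_1^{k_1}\cdots X_N^{k_N}]$, $k_i\ge 0$, once the diagonal moments $\mu_{i,2k}=\E[X_i^{2k}]$ obey the stated growth bound. First I would observe that the hypothesis forces every $\mu_{i,2k}$ to be finite, and hence, by H\"older's inequality, every mixed moment of $F$ is finite as well, so that the objects below are well defined.

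The core of the argument is a transfer estimate. Fix $\theta=(\theta_1,\dots,\theta_N)\in\R^N$ and let $F_\theta$ be the push--forward of $F$ under the linear map $x\mapsto\theta\cdot x$, a probability distribution on $\R$. By the multinomial theorem its $k$-th moment $\int_{\R^N}(\theta\cdot x)^k\,dF(x)$ is a fixed polynomial in the mixed moments of $F$; in particular two distributions on $\R^N$ with the same mixed moments yield the same $F_\theta$ for every $\theta$ up to moments. Moreover, by convexity of $t\mapsto t^{2k}$,
\[
\int_\R u^{2k}\,dF_\theta(u)=\int_{\R^N}(\theta\cdot x)^{2k}\,dF(x)\le N^{2k-1}\sum_{i=1}^N|\theta_i|^{2k}\mu_{i,2k}\le \Big(N\max_i|\theta_i|\Big)^{2k}\max_{1\le i\le N}\mu_{i,2k}.
\]
Taking $2k$-th roots, dividing by $2k$, and using $(\max_i\mu_{i,2k})^{1/(2k)}=\max_i\mu_{i,2k}^{1/(2k)}$ together with $\limsup_k\max_i a_{i,k}=\max_i\limsup_k a_{i,k}$ for finitely many sequences, one gets
\[
\limsup_{k\to\infty}\frac{1}{2k}\Big(\int_\R u^{2k}\,dF_\theta(u)\Big)^{1/(2k)}\le N\big(\max_i|\theta_i|\big)\,\max_{1\le i\le N}r_i<\infty,
\]
so $F_\theta$ satisfies the hypothesis of the one--dimensional result for \emph{every} $\theta\in\R^N$.

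To finish, suppose $F$ and $G$ are distribution functions on $\R^N$ with the same mixed moments, $F$ satisfying the stated hypothesis. For each $\theta$ the distributions $F_\theta$ and $G_\theta$ on $\R$ have the same moments, and $F_\theta$ satisfies the growth condition; hence by Theorem 3.3.11 in \cite{D}, $F_\theta$ is the unique distribution on $\R$ with these moments, so $F_\theta=G_\theta$. Consequently the characteristic functions agree,
\[
\widehat F(\theta)=\int_{\R^N}e^{\iota\,\theta\cdot x}\,dF(x)=\int_\R e^{\iota u}\,dF_\theta(u)=\int_\R e^{\iota u}\,dG_\theta(u)=\widehat G(\theta)\qquad\text{for all }\theta\in\R^N,
\]
and the uniqueness theorem for characteristic functions yields $F=G$. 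I expect no serious difficulty here; the only step requiring genuine care is the transfer estimate above, i.e.\ checking that the scalar growth condition is inherited by every linear projection $\theta\cdot X$ — this is exactly where the restriction to even exponents $2k$ and the convexity bound $|\theta\cdot x|^{2k}\le N^{2k-1}\sum_i|\theta_i|^{2k}|x_i|^{2k}$ enter. Everything else is the standard Cram\'er--Wold reduction together with the scalar result quoted from \cite{D}.
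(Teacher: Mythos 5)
Your proof is correct, but it takes a genuinely different route from the one in the paper. The paper extends Durrett's one-dimensional argument directly to $\mathbb{R}^N$: it bounds the absolute moments by $\nu_k<(cr)^kk^k$, controls the remainder in the multivariate Taylor expansion of the characteristic function $\phi$ around an arbitrary point $u$ by $(|t|Necr)^n$, and concludes that $\phi$ is given by its Taylor series in a polydisc of \emph{fixed} radius about every point, so that the mixed moments (which determine all $\partial^{\mathbf m}\phi(0)$) pin down $\phi$ on all of $\mathbb{R}^N$ by stepping outward. You instead reduce to the scalar case via the Cram\'er--Wold device: you push $F$ forward along each linear functional $x\mapsto\theta\cdot x$, check by the convexity bound $(\theta\cdot x)^{2k}\le N^{2k-1}\sum_i|\theta_i|^{2k}x_i^{2k}$ that every projection inherits the Carleman-type growth condition, invoke the one-dimensional Theorem 3.3.11 of \cite{D} as a black box to get $F_\theta=G_\theta$, and recover $F=G$ from equality of characteristic functions. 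Your transfer estimate is right (including the exchange of $\limsup$ and $\max$ over finitely many indices), and the argument is shorter and avoids redoing the Taylor estimate; the paper's version is self-contained and makes the analyticity of $\phi$ explicit. One caveat applies equally to both proofs: as literally stated the theorem only prescribes the diagonal moments $\mu_{i,k}$, which cannot determine a joint law; both your argument and the paper's implicitly assume that all \emph{mixed} moments are prescribed, with the growth hypothesis imposed only on the diagonal ones, and you are explicit about reading the statement this way.
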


\begin{proof}
Let $F$ be any distribution function on $\mathbb{R}^N$ with moments $\mu_{i,k}$ and let $\nu_{i,k}=\int_{\mathbb{R}^N} |x_i|^k dF(x)$. It is well known that $\nu^{\frac{1}{k}}_{i,k}\leq \nu^{\frac{1}{\ell}}_{i,\ell}$ if $k\leq \ell$. Hence $\limsup\limits_{k\to\infty} \frac{\nu^{\frac{1}{k}}_{i,k}}{k}=r_i<\infty$ for each $i=1,2,\dots, N$.  Let $r=\max\{r_1,r_2,\dots, r_N\}$. Then there is a positive constant $c>1$ such that 
\begin{align} \label{bound}
v_{k}:=\max\{v_{1,k}, v_{2,k},\dots, v_{N,k}\}<(c r)^k k^k
\end{align}
for all $k\in\mathbb{N}$.

Let $\phi(u)=\int_{\mathbb{R}^N} e^{\iota u\cdot x} dF(x)$ for any $u\in\mathbb{R}^N$. For the multi-index ${\bf m}=(m_1,m_2,\dots,m_N)$ where $m_i\in\{0,1,2,\dots\}$ for each $i=1,2,\dots, N$, we define 
\begin{align*}
|{\bf m}|=\sum^N_{i=1} m_i, \quad {\bf m}!=\prod^N_{i=1} m_i !, \quad u^{{\bf m}}=\prod^N_{i=1}u^{m_i}_i\quad \text{and}\quad  \frac{\partial^{{\bf m}} \phi}{\partial u^{{\bf m}}}=\frac{\partial^{|{\bf m}|} \phi}{\partial u^{m_1}_1u^{m_2}_2\cdots u^{m_N}_N}.
\end{align*}
Then, by Taylor's formula in several variables and the multinomial theorem, 
\begin{align*}
\left|\phi(t+u)-\sum^{n-1}_{|{\bf m}|=0}\frac{\partial^{{\bf m}} \phi(u)}{{\bf m}!}t^{{\bf m}} \right| 
&\leq \int_{\mathbb{R}^N}  \sum_{|{\bf m}|=n}  \frac{1}{{\bf m}!} \left(\prod^N_{i=1} |t_i x_i|^{m_i}\right) dF(x) \\
&=\frac{1}{n!} \int_{\mathbb{R}^N} \left(\sum^N_{i=1} |t_i x_i|\right)^n dF(x) \\
&\leq  \frac{|t|^n  N^{n-1}}{n!} \sum^N_{i=1}  \int_{\mathbb{R}^N}  |x_i|^n dF(x).
\end{align*}
Using (\ref{bound}) gives
\begin{align*}
\left|\phi(t+u)-\sum^{n-1}_{|{\bf m}|=0}\frac{\partial^{{\bf m}} \phi(u)}{{\bf m}!}t^{{\bf m}} \right| 
&\leq \frac{|t|^n  N^n v_n}{n!} \leq \frac{|t|^n  (Nc r)^n n^n}{n!} \leq  (|t| N ec r)^n,
\end{align*}
where we use $e^n>n^n/n!$ in the last inequality.

Hence
\begin{align} \label{exp}
\phi(t+u)=\phi(u)+\sum^{\infty}_{|{\bf m}|=1}\frac{\partial^{{\bf m}} \phi(u)}{{\bf m}!}t^{{\bf m}}\qquad \text{for}\;  |t|\leq \frac{1}{3Ncr}.
\end{align}
Since $\phi(0)=1$, the equality (\ref{exp}) implies that the characteristic function of $F$ is uniquely determined by the given moments.
\end{proof}


\section{Acknowledge}

We would like to thank Professor Yimin Xiao for very helpful discussions on the local nondeterminism and the strong local nondeterminism for Gaussian processes.

$\begin{array}{cc}
\begin{minipage}[t]{1\textwidth}
{\bf Minhao Hong}\\
School of Science, Shanghai Maritime University, Shanghai 201306, China\\
\texttt{mhhong@shmtu.edu.cn}
\end{minipage}
\hfill
\end{array}$

\medskip

$\begin{array}{cc}
\begin{minipage}[t]{1\textwidth}
{\bf Heguang Liu}\\
School of Statistics, East China Normal University, Shanghai 200262, China \\
\texttt{1305960017@qq.com}
\end{minipage}
\hfill
\end{array}$

\medskip

$\begin{array}{cc}
\begin{minipage}[t]{1\textwidth}
{\bf Fangjun Xu}\\
KLATASDS-MOE, School of Statistics, East China Normal University, Shanghai, 200062, China \\
NYU-ECNU Institute of Mathematical Sciences at NYU Shanghai, 3663 Zhongshan Road North, Shanghai, 200062, China\\
\texttt{fjxu@finance.ecnu.edu.cn, fangjunxu@gmail.com}
\end{minipage}
\hfill
\end{array}$

\end{document}